\documentclass[dvips,aop,preprint]{imsart}

\RequirePackage[OT1]{fontenc} \RequirePackage{amsthm,amsmath,
amsfonts,amsthm,amscd,amssymb,graphicx}
\RequirePackage[numbers]{natbib}
\RequirePackage[colorlinks,citecolor=blue,urlcolor=blue]{hyperref}
\RequirePackage{hypernat}


\startlocaldefs
\numberwithin{equation}{section}
\theoremstyle{plain}

\newtheorem{theorem}{Theorem}[section]

\newtheorem{lemma}[theorem]{Lemma}

\newtheorem{corollary}[theorem]{Corollary}

\newtheorem{definition}[theorem]{Definition}

\endlocaldefs

\begin{document}

\begin{frontmatter}
\title{A Khintchine Decomposition for Free Probability.}
\runtitle{Khintchine Decomposition.}

\begin{aug}
\author{\fnms{John D.} \snm{Williams}\ead[label=e1]{jw32@indiana.edu}}

\runauthor{John D. Williams}

\address{Indiana University\\
Rawles Hall\\
Bloomington, In \ \ 47405\\
\printead{e1}\\
}
\end{aug}

\begin{abstract}
Let $\mu$ be a probability measure on the real line.  In this paper
we prove that there exists a decomposition $\mu = \mu_{0} \boxplus
\mu_{1} \boxplus \cdots \boxplus \mu_{n} \boxplus \cdots$ such that
$\mu_{0}$ is infinitely divisible and $\mu_{i}$ is indecomposable
for $i \geq 1$. Additionally, we prove that the family of all
$\boxplus$-divisors of a measure $\mu$ is compact up to translation.
Analogous results are also proven in the case of multiplicative
convolution.
\end{abstract}

\begin{keyword}[class=AMS]
 \kwd{60K35}
\end{keyword}

\begin{keyword}
\kwd{free probability} \kwd{decomposition} \kwd{infinite
divisibility}
\end{keyword}

\end{frontmatter}

\section{Introduction}

In classical probability theory, it has long been known that the set
of all convolution divisors of a random variable is compact up to
translation. That is, given a family of decompositions $\mu =
\mu_{1,i} \ast \mu_{2,i}$ with $i \in I$, the families $\{ \mu_{j,i}
\}_{i \in I , j = 1,2}$ can be translated to form sequentially
compact families $\{ \hat{\mu}_{i,j} \}_{i \in I , j = 1,2}$ so that
$\mu = \hat{\mu}_{1,i} \ast \hat{\mu}_{2,i}$ for all $i \in I$. The
proof of this result is a simple application of L\'evy's Lemma (see
Chapter $5$ in \cite{Ln} for a full account of the classical case).
 This compactness lemma serves as the cornerstone for the proof of
the following classical result of Khintchine.
\begin{theorem}\label{classic}
Let $\mu$ be a probability measure.  Then there exist measures
$\mu_{i}$ with $i = 0,1,2, \ldots $ such that $\mu_{0}$ is
$\ast$-infinitely divisible, $\mu_{i}$ is indecomposable for $i = 1,
2, \ldots $, and $\mu = \mu_{0} \ast \mu_{1} \ast \mu_{2} \ast
\cdots$. This decomposition is not unique.
\end{theorem}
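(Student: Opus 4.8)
The plan is to follow the classical scheme, with the compactness-up-to-translation lemma quoted above as the source of the limiting factors: I would build the decomposition by a greedy extraction of indecomposable divisors, kept under control by an additive functional on characteristic functions. Call a probability measure \emph{degenerate} if it is a point mass, and take divisors modulo translation throughout. Fix a bounded open interval $J\ni 0$ whose closure misses the zero set of $\hat\mu$, fix a probability measure $\gamma$ of full support on $J$, and set
\[
W(\nu):=\int_{J}\bigl(-\log|\hat\nu(t)|^{2}\bigr)\,d\gamma(t).
\]
Then $W$ is translation invariant, additive under convolution, nonnegative, and vanishes exactly on the point masses (a characteristic function of modulus $1$ on an interval forces the measure to be degenerate); moreover $0\le W(\nu)\le W(\mu)<\infty$ for every divisor $\nu$ of $\mu$. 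Now put $\rho_{0}=\mu$; given $\rho_{n-1}$, if $\rho_{n-1}$ has no indecomposable divisor I set $\mu_{0}:=\rho_{n-1}$, put $\mu_{i}:=\delta_{0}$ for $i\ge n$, and stop; otherwise let $s_{n}:=\sup\{W(\sigma):\sigma\ \text{indecomposable},\ \sigma\mid\rho_{n-1}\}>0$, pick an indecomposable divisor $\mu_{n}$ of $\rho_{n-1}$ with $W(\mu_{n})>\tfrac12 s_{n}$, and pick a probability measure $\rho_{n}$ with $\mu_{n}\ast\rho_{n}=\rho_{n-1}$. If the process halts we are done at once, so suppose it runs forever.

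By additivity $\sum_{i=1}^{n}W(\mu_{i})+W(\rho_{n})=W(\mu)$, so $\sum_{i\ge1}W(\mu_{i})\le W(\mu)<\infty$; hence $W(\mu_{i})\to 0$, whence $s_{i}\to 0$ and $\mu_{i}\to\delta_{0}$ up to translation (the divisors of $\mu$ are tight up to translation, and any subsequential weak limit $\lambda$ of the $\mu_{i}$ has $|\hat\lambda|\equiv 1$ on $J$, hence is degenerate). Now I must check that the infinite convolution converges, and here the compactness lemma does the real work. The partial products $P_{n}=\mu_{1}\ast\cdots\ast\mu_{n}$ are divisors of $\mu$, hence tight up to translation, and since $\mu_{i}\to\delta_{0}$ I can choose the translates coherently---recentering $P_{n}$ at its median and using that the median of $P_{n-1}\ast\mu_{n}$ differs from that of $P_{n-1}$ by an amount tending to $0$ with the distance from $\mu_{n}$ to a point mass---so that $P_{n}\to P_{\infty}$ weakly for some probability measure $P_{\infty}$. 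The remainders $\rho_{n}$ are divisors of $\mu$ as well, hence tight up to translation, and their characteristic functions converge on $J$; with the matching recentering $\rho_{n}\to\mu_{0}$ for some probability measure $\mu_{0}$, and passing to the limit in $\mu=P_{n}\ast\rho_{n}$ yields $\mu=P_{\infty}\ast\mu_{0}$ with $P_{\infty}=\mu_{1}\ast\mu_{2}\ast\cdots$.

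It remains to see that $\mu_{0}$ has no indecomposable divisor. Letting $k\to\infty$ in $\rho_{n}=\mu_{n+1}\ast\cdots\ast\mu_{n+k}\ast\rho_{n+k}$---the cofactors being, again, tight up to translation with characteristic functions converging on $J$---shows $\mu_{0}\mid\rho_{n}$ for every $n$. So if some indecomposable $\sigma$ divided $\mu_{0}$, it would be an indecomposable divisor of each $\rho_{n}$, whence $s_{n+1}\ge W(\sigma)>0$ for all $n$, contradicting $s_{i}\to 0$. Thus $\mu_{0}$ has no indecomposable factor---i.e.\ it is infinitely divisible in the relevant sense (it lies in the class $I_{0}$, which contains the infinitely divisible laws)---while the $\mu_{i}$, $i\ge 1$, are indecomposable or eventually $\delta_{0}$; this is the asserted decomposition. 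Non-uniqueness then follows from the well-known existence of probability laws admitting essentially different factorizations into indecomposables.

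The step I expect to be the main obstacle is the convergence argument above: one must handle $P_{n}$, $\rho_{n}$ and $\mu_{n}$ \emph{simultaneously up to translation} and produce a single coherent centering under which both $P_{n}$ and $\rho_{n}$ converge and the identity $\mu=P_{\infty}\ast\mu_{0}$ survives in the limit. This is precisely where the compactness-up-to-translation lemma (via L\'evy's Lemma) is indispensable; the centering bookkeeping is the only genuinely delicate point, whereas the greedy rule built from $W$ is exactly what forces the residue $\mu_{0}$ to carry no further indecomposable content.
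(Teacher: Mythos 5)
A preliminary remark: the paper does not prove Theorem \ref{classic} at all --- it is quoted as classical background with a pointer to Linnik --- so the relevant comparison is with the paper's proof of the free analogue, Theorems \ref{id} and \ref{k1}. Your architecture (an additive, translation-invariant, weakly continuous functional vanishing exactly on point masses; greedy extraction of indecomposable divisors with $W(\mu_n)>s_n/2$; summability of $W(\mu_i)$; the residue inheriting no indecomposable divisors) is exactly the scheme the paper runs with $\Lambda(\nu)=-\int_{\Gamma'}\Im\varphi_\nu\,dA$ in place of your $W$. But there is a genuine gap at the very last step. You conclude that $\mu_0$ has no indecomposable factor and then declare it ``infinitely divisible in the relevant sense (it lies in the class $I_0$, which contains the infinitely divisible laws).'' First, the inclusion is backwards as written: $I_0$ is \emph{contained in} the infinitely divisible laws, and as you have stated it the parenthetical does not even imply what you need. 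Second, and more importantly, the implication ``no indecomposable divisors $\Rightarrow$ infinitely divisible'' is itself the hard half of Khintchine's theorem and must be proved, not cited as a definition of ``the relevant sense.'' In the paper this is Theorem \ref{id}, whose proof is a separate substantial argument: one uses the functional to show that the set of values $\Lambda(\nu)$ over divisors $\nu$ is all of $(0,\Lambda(\mu))$, extracts for each $n$ a decomposition into $n$ factors of equal $\Lambda$-value, shows this triangular array is infinitesimal, and invokes the limit theorem for infinitesimal arrays (Theorem \ref{pataid}; classically, the Khintchine--L\'evy limit theorem). Your functional $W$ is perfectly suited to running this argument, but you have not run it, and without it the statement of the theorem is not established.

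A secondary, smaller issue is the convergence of the partial products. Your centering rule --- recenter $P_n$ at its median and observe that consecutive medians differ by quantities tending to $0$ --- does not give convergence: increments tending to zero do not make a sequence Cauchy. What actually works (and what the paper does in the free case) is to note that the centered tails $\mu_{m+1}\ast\cdots\ast\mu_n$ converge to a point mass \emph{uniformly in $n$} as $m\to\infty$, because $\sum_{i>m}W(\mu_i)\to 0$ forces $|\widehat{\nu}_{n,m}|\to 1$ on $J$ uniformly in $n$; combined with tightness up to translation of $\{P_n\}$ and a point of continuous increase of the limiting distribution function to pin down the translation (the role of Corollary \ref{belcorr} and Lemma \ref{bellemma} in the paper), this yields genuine convergence of a single coherent centering. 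You correctly identify this as the delicate point, but the argument you sketch for it is not yet a proof.
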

The equation $\mu = \mu_{0} \ast \mu_{1} \ast \mu_{2} \ast \cdots$
is in the sense that in the weak$^{\ast}$ topology we have that
$\lim_{n \uparrow \infty} \mu_{0} \ast \mu_{1} \ast \cdots \ast
\mu_{n} = \mu$. This type of equality will be used throughout the
paper without further comment.

In free probability theory, the corresponding compactness and
decomposition theorems have hitherto been absent from the
literature.  Partial results of the corresponding compactness
theorem are near trivialities.  Indeed, consider a $W^{\ast}$
probability space $(A, \tau)$ and a random variable $X \in A$ with
mean $0$ and finite variance. Let $X = X_{1} + X_{2}$ be a
decomposition with the $X_{i}$'s freely independent and of mean $0$.
Then the equation $\tau(X^{2}) = \tau(X_{1}^{2}) + \tau(X_{2}^{2})$
would imply the necessary tightness result when applied to families
of decompositions.

It is the first aim of this paper to prove the corresponding
tightness results in the fullest possible generality.  That is, we
make no assumptions as to the finiteness of moments.  It is the
second aim of this paper to prove versions of Theorem \ref{classic}
for additive and multiplicative free convolution.

This paper is organized as follows:   in Section $2$ we give the
background and terminology of additive free convolution; in Section
$3$ we state and prove a number of compactness results for families
of decompositions with respect to additive free convolution; in
Section $4$ we prove the existence of the Khintchine decomposition
with respect to additive free convolution; Sections $5$, $6$ and $7$
are the respective analogues of Sections $2$, $3$ and $4$ but with
regard to multiplicative free convolution for measures supported on
the positive real numbers; in Section $8$ we give the background and
terminology for multiplicative free convolution of measures
supported on the unit circle; in Section $9$ we prove the existence
of the Khintchine decomposition for measures supported on the unit
circle; in Section $10$ we provide applications of our compactness
results.

\section{Background and Terminology for Additive Free Convolution}
We refer to \cite{VDN} for a full account of the basics of free
probability theory.

Let $(A,\tau)$ be a $W^{\ast}$ probability space.  We say that a
family of unital subalgebras $\{ A_{i} \}_{i \in I}$ are
\textit{freely independent} if $\tau(x_{i_{1}} x_{i_{2}} \cdots
x_{i_{n}}) = 0$ for  $x_{i_{j}} \in A_{i_{j}}$ whenever $i_{j} \neq
i_{j + 1}$ for $j= 1, \ldots , n-1$ and $\tau(x_{i_{k}}) = 0$ for $k
= 1, \ldots , n$.  We say that random variables $x,y \in A$ are
freely independent if the unital algebras that they generate in $A$
satisfy the above definition.

Assume that $A \subset B(H)$.  We say that a not necessarily bounded
operator $x$ is affilitated with $A$ (in symbols, $x \eta A$) if the
spectral projections of $x$ are elements in $A$.  Equivalently, $x
\eta A$ if for every $y \in A'$ (the commutant of A), we have that
$yx \subset xy$.  This expanded class of random variables allows us
to study measures with unbounded support.

Let $x \ \eta \ A$ be a self-adjoint random variable with
distribution $\mu$, a probability measure supported on $\mathbb{R}$.
We associate to $\mu$ its  Cauchy transform:

$$G_{\mu}(z) = \int_{\mathbb{R}} \frac{d\mu(t)}{z-t} = \tau ((z-x)^{-1})$$
Observe that $zG_{\mu}(z) \rightarrow 1$ as $z \rightarrow \infty$
nontangentially.  It follows that $G_{\mu}$ is univalent on a set of
the form $\Gamma_{\alpha, \beta} = \{z \in \mathbb{C^{+}} : \Im{(z)}
> \alpha , \ \Im{(z)} > \beta \Re{(z)} \}$ for sufficiently large $\alpha , \beta > 0$.
  Throughout this paper we shall refer to a set of this type as a
\textit{Stolz angle}. The set $G_{\mu}(\Gamma_{\alpha, \beta})$
contains a set of the form $\Lambda_{\alpha', \beta'} = \{ z\in
\mathbb{C}^{-} : 0 < \Im{(z)} \leq \alpha' , \ \beta' \Re{(z)} <
\Im{(z)} \}$ on which we have a well defined left inverse,
$G_{\mu}^{-1}$.   The function $R_{\mu} (z) = G_{\mu}^{-1} (z) -
1/z$ is called the R-transform of $\mu$. First proved in \cite{V1},
the following equality is fundamental in free probability theory:
$$R_{\mu \boxplus \nu} (z) = R_{\mu}(z) + R_{\nu}(z)$$

In what follows, it will be more convenient to consider the
following functions:
$$F_{\mu}(z) = \frac{1}{G_{\mu}(z)} $$
$$\varphi_{\mu}(z) = F_{\mu}^{-1}(z) - z = R_{\mu} (1 / z)$$
These functions are refered to as the $F$ and Voiculescu transform,
respectively.  They have the following properties which are proven
to various degrees of generality in \cite{BV1}, \cite{V1}, and
\cite{Ma}:
\begin{enumerate}
\item
$|F_{\mu}(z) - z| = o(|z|)$ uniformly as $|z| \rightarrow \infty$ in
$ \Gamma_{\alpha, \beta}$ for all $\alpha , \beta > 0$.

\item
$\Im{(F_{\mu(z)})} \geq \Im{(z)}$ for all $z \in \mathbb{C}^{+}$.

\item
$F_{\mu}$ has a well defined left inverse on $\Gamma_{\alpha,
\beta}$ for some $\alpha , \beta > 0$ (hence, the Voiculescu
transform is defined on this set).

\item
There exist $\alpha , \beta > 0$ such that $\varphi_{\mu \boxplus
\nu} (z) = \varphi_{\mu}(z) + \varphi_{\nu} (z)$ when $z \in
\Gamma_{\alpha , \beta}$.

\item
$F_{\mu \boxplus \delta_{c}} (z) = F_{\mu} (z-c)$ and $\varphi_{\mu
 \boxplus \delta_{c}} = c + \varphi_{\mu}(z)$ for $c \in \mathbb{R}$.

\end{enumerate}

Given a decomposition $\mu = \mu_{1} \boxplus \mu_{2}$, it was shown
in \cite{V3} and \cite{Bi} that there exist analytic subordination
functions $\omega_{i} : \mathbb{C}^{+} \rightarrow \mathbb{C}^{+}$
such that:

\begin{enumerate}
\item
$F_{\mu}(z) = F_{\mu_{i}}(\omega_{i}(z))$ for $z \in \mathbb{C}^{+}$
and $i = 1,2$.

\item
$\lim_{y \uparrow + \infty} \frac{\omega_{i} (iy)}{iy} = 1$ for $i =
1,2$.

\item
$\omega_{1} (z) + \omega_{2} (z) = z + F_{\mu} (z)$
\end{enumerate}

Observe that $\omega_{i}$ and $F_{\mu}$ satisfy the same asymptotic
properties in $(2)$ above.  A classical result, due to Nevanlinna
(whose full account can be found in \cite{AG}, Volume $2$, page
$7$),  implies that these functions have the following
representation:

$$\omega_{i} (z)  = r_{i} + z + \int_{- \infty}^{\infty} \frac{1 + tz}{ z - t} d\sigma_{i}(t)$$
$$F_{\mu}(z) =  r + z + \int_{-\infty}^{\infty} \frac{1 + tz}{ z - t} d\sigma(t)$$
where $r , r_{i} \in \mathbb{R}$ and $\sigma$, $\sigma_{i}$ are
positive, finite measures which are uniquely determined by
$\omega_{i}$ and $F_{\mu}$. Observe that property $(3)$ above and
uniqueness imply that $r_{1} + r_{2} = r$ and $\sigma_{1} +
\sigma_{2} = \sigma$.

We denote by $\mathfrak{F}_{\mu} (t) = \mu((-\infty, t])$ the
cumulative distribution function of $\mu$.  This function is used to
define two metrics on the space of probability measures, namely the
Kolmogorov and L\'evy metric, $d_{\infty}$ and $d$ respectively.
These are defined as follows:
$$d_{\infty} (\mu , \nu) = \sup_{t \in \mathbb{R}} |\mathfrak{F}_{\mu}(t) - \mathfrak{F}_{\nu}(t)| $$
$$d(\mu, \nu) = \inf \{\epsilon > 0 : \mathfrak{F}_{\mu}(t-\epsilon) - \epsilon \leq \mathfrak{F}_{\nu}(t) \leq \mathfrak{F}_{\mu}(t + \epsilon) + \epsilon \} $$
The L\'evy metric induces the weak topology on the space of
probability measures on the line while the Kolmogorov metric induces
a stronger topology which we call the Kolmogorov topology.  We have
the the following facts, first proven in \cite{BV1}, which will be
used throughout, often without reference:

\begin{lemma}
Let $\mu_{n}$ and $\nu_{n}$ converge to probability measures $\mu$
and $\nu$ respectively in the weak$^{\ast}$ (resp., Kolmogorov)
topology. Then $\mu_{n} \boxplus \nu_{n}$ converges to $\mu \boxplus
\nu$ in the weak$^{\ast}$ (resp., Kolmogorov) topology.
\end{lemma}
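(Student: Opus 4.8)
The plan is to transfer the statement to the level of the analytic transforms, where $\boxplus$ becomes ordinary addition, and then transfer the conclusion back. I will use two standard facts. First, an elementary one: for probability measures on $\mathbb{R}$, weak$^{\ast}$ convergence $\rho_{n}\to\rho$ is equivalent to locally uniform convergence $G_{\rho_{n}}\to G_{\rho}$ on $\mathbb{C}^{+}$ — one direction because $t\mapsto(z-t)^{-1}$ is bounded and continuous for $z\in\mathbb{C}^{+}$, the other by the Vitali--Porter theorem (the $G_{\rho_{n}}$ are locally bounded) together with tightness of $\{\rho_{n}\}$ and Stieltjes inversion — and hence to $F_{\rho_{n}}\to F_{\rho}$ locally uniformly. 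Second, the Voiculescu-transform criterion from \cite{BV1}: a weak$^{\ast}$ convergent sequence is tight, and tightness of $\{\rho_{n}\}$ is exactly what forces the estimate $(1)$, $|F_{\rho_{n}}(z)-z|=o(|z|)$, to hold \emph{uniformly in $n$} on a common Stolz angle; combined with $(3)$ this produces a single Stolz angle $\Gamma_{\alpha,\beta}$ on which every $\varphi_{\rho_{n}}$ and $\varphi_{\rho}$ is defined, $\varphi_{\rho_{n}}\to\varphi_{\rho}$ uniformly on compact subsets, and $\varphi_{\rho_{n}}(z)=o(|z|)$ uniformly in $n$ — and, conversely, these conditions on a sequence force $\rho_{n}\to\rho$ weak$^{\ast}$.

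Granting this, the weak$^{\ast}$ part is short. Since $\mu_{n}\to\mu$ and $\nu_{n}\to\nu$, both sequences are tight; applying the criterion to each and intersecting the two Stolz angles, fix one angle $\Gamma=\Gamma_{\alpha,\beta}$ on which $\varphi_{\mu_{n}}\to\varphi_{\mu}$ and $\varphi_{\nu_{n}}\to\varphi_{\nu}$ uniformly on compacts, with $\varphi_{\mu_{n}}(z),\varphi_{\nu_{n}}(z)=o(|z|)$ uniformly in $n$. The uniform-in-$n$ asymptotics are precisely what is needed so that, after one further shrinking of $\Gamma$, property $(4)$ holds for every pair $(\mu_{n},\nu_{n})$ on the \emph{same} angle: $\varphi_{\mu_{n}\boxplus\nu_{n}}=\varphi_{\mu_{n}}+\varphi_{\nu_{n}}$ on $\Gamma$. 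Hence on $\Gamma$ we have $\varphi_{\mu_{n}\boxplus\nu_{n}}\to\varphi_{\mu}+\varphi_{\nu}=\varphi_{\mu\boxplus\nu}$ uniformly on compacts, and $\varphi_{\mu_{n}\boxplus\nu_{n}}(z)=o(|z|)$ uniformly in $n$ as a sum of two such; the converse half of the criterion then gives $\mu_{n}\boxplus\nu_{n}\to\mu\boxplus\nu$ weak$^{\ast}$. Alternatively one may run the same scheme through the subordination functions of $\mu_{n}\boxplus\nu_{n}$: these map $\mathbb{C}^{+}$ into $\mathbb{C}^{+}$ and so form a normal family, any subsequential limit solves the subordination equations for $\mu\boxplus\nu$ by $F_{\mu_{n}}\to F_{\mu}$ and $F_{\nu_{n}}\to F_{\nu}$, uniqueness of the subordination functions forces convergence of the full sequence, and $F_{\mu_{n}\boxplus\nu_{n}}=F_{\mu_{n}}\circ\omega_{1,n}\to F_{\mu}\circ\omega_{1}=F_{\mu\boxplus\nu}$.

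For the Kolmogorov statement the scheme is identical, carried out with the Kolmogorov-strength version of the dictionary from \cite{BV1}: $\rho_{n}\to\rho$ in the Kolmogorov metric corresponds to the \emph{uniform} convergence $\sup_{z\in\mathbb{C}^{+}}|G_{\rho_{n}}(z)-G_{\rho}(z)|\to 0$, equivalently to uniform convergence of the $\varphi_{\rho_{n}}$ on a Stolz angle together with the matching quantitative control on the total masses of the Nevanlinna measures $\sigma_{n}$ representing the $F_{\rho_{n}}$. These conditions are again stable under forming $\varphi_{\mu_{n}}+\varphi_{\nu_{n}}$ — here the identity $\sigma_{1}+\sigma_{2}=\sigma$ for the representing measures, noted above, is what keeps the masses under control — so they propagate to $\mu_{n}\boxplus\nu_{n}$, and the converse direction closes the argument. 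The main obstacle throughout is not the passage to the limit, which is routine via Vitali--Porter (or Montel, in the subordination version) and uniqueness of the Nevanlinna representation, but the uniformity: extracting a single Stolz angle and an estimate $|F_{\rho_{n}}(z)-z|=o(|z|)$ uniform in $n$ from nothing more than the tightness of $\{\mu_{n}\}$ and $\{\nu_{n}\}$. That uniformity is exactly what lets property $(4)$ be applied simultaneously to the whole sequence, and it is the technical heart of the proof.
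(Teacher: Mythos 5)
The paper does not prove this lemma by transform methods at all: it cites \cite{BV1} and records that the proof rests on the two subadditivity inequalities $d(\mu \boxplus \nu , \mu' \boxplus \nu') \leq d(\mu , \mu') + d(\nu , \nu')$ and $d_{\infty}(\mu \boxplus \nu , \mu' \boxplus \nu') \leq d_{\infty}(\mu , \mu') + d_{\infty}(\nu , \nu')$, which are established in \cite{BV1} by operator-theoretic estimates on distribution functions of free sums. Given these, both halves of the lemma are one-line consequences, since the L\'evy metric metrizes the weak$^{\ast}$ topology and $d_{\infty}$ defines the Kolmogorov topology. Your weak$^{\ast}$ argument via Lemma \ref{BVconv} is a genuinely different route and is essentially correct: you rightly identify that the whole issue is extracting from tightness a single Stolz angle on which $|F_{\rho_{n}}(z)-z|=o(|z|)$ uniformly in $n$, so that the additivity $\varphi_{\mu_{n}\boxplus\nu_{n}}=\varphi_{\mu_{n}}+\varphi_{\nu_{n}}$ holds on one common angle for all $n$, after which the converse half of Lemma \ref{BVconv} closes the loop. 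It is heavier than the metric inequality but legitimate.

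The Kolmogorov half, however, has a genuine gap. The ``dictionary'' you invoke --- that $d_{\infty}(\rho_{n},\rho)\to 0$ is equivalent to $\sup_{z\in\mathbb{C}^{+}}|G_{\rho_{n}}(z)-G_{\rho}(z)|\to 0$, or to uniform convergence of the $\varphi_{\rho_{n}}$ together with control of the masses of the Nevanlinna measures --- is not a fact from \cite{BV1} and is false in both directions. Take $\rho$ absolutely continuous and $\rho_{n}=(1-\tfrac{1}{n})\rho+\tfrac{1}{n}\delta_{0}$: then $d_{\infty}(\rho_{n},\rho)\leq\tfrac{1}{n}\to 0$, yet $G_{\rho_{n}}-G_{\rho}$ contains the term $\tfrac{1}{nz}$, which is unbounded on $\mathbb{C}^{+}$, so the supremum is infinite for every $n$. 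Conversely, $\varphi_{\delta_{1/n}}\equiv\tfrac{1}{n}$ converges to $\varphi_{\delta_{0}}\equiv 0$ uniformly on all of $\mathbb{C}^{+}$ with all representing measures $\sigma_{n}=0$, yet $d_{\infty}(\delta_{1/n},\delta_{0})=1$ for every $n$: Kolmogorov convergence is sensitive to the exact location of atoms and cannot be detected by any locally (or even globally) uniform control of $G$, $F$ or $\varphi$ on a Stolz angle. The Kolmogorov statement really does require the inequality $d_{\infty}(\mu\boxplus\nu,\mu'\boxplus\nu')\leq d_{\infty}(\mu,\mu')+d_{\infty}(\nu,\nu')$, or some other argument carried out directly at the level of distribution functions of free sums; your transform-side scheme cannot reach it.
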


The proof of this lemma relies on the following inequalities which
will be used in what follows:

$$d(\mu \boxplus \nu ,  \mu' \boxplus \nu') \leq d(\mu , \mu') + d(\nu , \nu') $$

$$d_{\infty}(\mu \boxplus \nu ,  \mu' \boxplus \nu') \leq d_{\infty}(\mu , \mu') + d_{\infty}(\nu , \nu') $$

The next two lemmas were first proven in Section $5$ of \cite{BV1}.

\begin{lemma}\label{tightdomain}
Let $\{ \mu_{n} \}_{n \in \mathbb{N}}$ be a tight sequence of
measures.  Then there exists a Stolz angle $\Gamma_{\alpha, \beta}$
such that the functions $|F_{\mu_{n}}(z) - z| = o(z)$ uniformly as
$|z| \rightarrow \infty$ in this set.  In particular, the functions
$F^{-1}_{\mu_{n}}$ exist on a common domain for all $n$.
\end{lemma}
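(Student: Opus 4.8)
The plan is to extract the uniform asymptotic $|F_{\mu_n}(z)-z|=o(|z|)$ directly from tightness by an elementary estimate on the Cauchy transforms, and then to produce the common domain of invertibility by running the usual single‑measure argument with constants that are visibly independent of $n$.

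First I would record the identity
$$F_{\mu_n}(z)-z=\frac{1}{G_{\mu_n}(z)}-z=\frac{1-zG_{\mu_n}(z)}{G_{\mu_n}(z)},\qquad 1-zG_{\mu_n}(z)=-\int_{\mathbb{R}}\frac{t}{z-t}\,d\mu_n(t),$$
and fix once and for all a Stolz angle $\Gamma_{\alpha_0,\beta_0}$ on which, for every real $t$, one has $|z-t|\ge \Im z\ge c|z|$ with $c=c(\beta_0)>0$, so that $\bigl|\tfrac{t}{z-t}\bigr|=\bigl|\tfrac{z}{z-t}-1\bigr|\le 1+1/c$. Given $\varepsilon>0$, tightness provides $M=M(\varepsilon)$ with $\mu_n(\mathbb{R}\setminus[-M,M])<\varepsilon$ for all $n$. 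For $|z|\ge M$ and $|t|\le M$ we have $|z-t|\le 2|z|$, hence
$$|G_{\mu_n}(z)|\ge |\Im G_{\mu_n}(z)|=\int_{\mathbb{R}}\frac{\Im z}{|z-t|^{2}}\,d\mu_n(t)\ge\frac{\Im z}{4|z|^{2}}\,\mu_n([-M,M])\ge\frac{c(1-\varepsilon)}{4|z|},$$
while, splitting the defining integral of $1-zG_{\mu_n}(z)$ over $\{|t|\le M\}$ and $\{|t|>M\}$ and using the two bounds above,
$$|1-zG_{\mu_n}(z)|\le\frac{M}{\Im z}+(1+1/c)\,\varepsilon\le\frac{M}{c|z|}+(1+1/c)\,\varepsilon .$$

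Combining these,
$$\frac{|F_{\mu_n}(z)-z|}{|z|}=\frac{|1-zG_{\mu_n}(z)|}{|z|\,|G_{\mu_n}(z)|}\le\frac{4}{c(1-\varepsilon)}\left(\frac{M}{c|z|}+(1+1/c)\,\varepsilon\right)\qquad(z\in\Gamma_{\alpha_0,\beta_0},\ |z|\ge M),$$
the right‑hand side being independent of $n$. Letting $|z|\to\infty$ sends the first term to $0$, and since $\varepsilon>0$ was arbitrary we obtain $\sup_n|F_{\mu_n}(z)-z|=o(|z|)$ uniformly in $\Gamma_{\alpha_0,\beta_0}$. For the last assertion, this uniform estimate lets us pass to a Stolz angle $\Gamma_{\alpha,\beta}$ on which $|F_{\mu_n}(z)-z|\le\tfrac12\Im z$ for every $n$; the standard Rouch\'e argument that produces $F_{\mu}^{-1}$ on a Stolz angle for a single measure (comparing $F_{\mu_n}(z)-w$ with $z-w$ on suitable circles) then goes through with a single pair $(\alpha',\beta')$ valid for all $n$, yielding a common domain $\Gamma_{\alpha',\beta'}$ on which each $F_{\mu_n}^{-1}$ is defined.

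The delicate point is the uniform lower bound on $|G_{\mu_n}(z)|$: this is exactly where tightness is essential — pointwise convergence of the $G_{\mu_n}$ would not suffice — and one must keep the quantifiers in the right order, choosing $\varepsilon$ first, then the cutoff $M=M(\varepsilon)$, and only then letting $|z|\to\infty$, so that the resulting $o(|z|)$ is genuinely uniform over the whole family.
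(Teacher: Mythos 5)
Your argument is correct: the identity $F_{\mu_n}(z)-z=(1-zG_{\mu_n}(z))/G_{\mu_n}(z)$, the lower bound on $|G_{\mu_n}(z)|$ via $|\Im G_{\mu_n}(z)|$ restricted to $[-M,M]$, the split estimate on $\int \frac{t}{z-t}\,d\mu_n(t)$, and the order of quantifiers ($\varepsilon$, then $M(\varepsilon)$, then $|z|\to\infty$) are all sound, and the uniform estimate does make the Rouch\'e construction of $F_{\mu_n}^{-1}$ go through with constants independent of $n$. The paper gives no proof of this lemma, citing Section 5 of \cite{BV1}, and your computation is essentially the standard argument found there.
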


\begin{lemma}\label{BVconv}
Let $\{ \mu_{n} \}_{n \in \mathbb{N}}$ be a sequence of probability
measures on $\mathbb{R}$.  The following assertions are equivalent:

\begin{enumerate}
\item
The sequence $\{ \mu_{n} \}_{n \in \mathbb{N}}$ converges in the
weak$^{\ast}$ topology to a probability measure $\mu$.

\item
There exist $\alpha , \beta > 0$ such that the functions $\{
\varphi_{\mu_{n}} \}_{n \in \mathbb{N}}$ are defined and converge
uniformly on compact subsets of $\Gamma_{\alpha , \beta}$ to a
function $\varphi$ and $\varphi_{\mu_{n}} (z) = o(z)$ uniformly in
$n$ as $|z| \rightarrow \infty$, $z \in \Gamma_{\alpha, \beta}$.

\end{enumerate}
Moreover, if $(1)$ and $(2)$ are satisfied we have that $\varphi =
\varphi_{\mu}$ in $\Gamma_{\alpha , \beta}$.

\end{lemma}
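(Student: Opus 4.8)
The plan is to shuttle between the measures $\mu_n$ and their transforms $F_{\mu_n},\varphi_{\mu_n}$ by normal-families arguments, using the uniform asymptotic control of Lemma \ref{tightdomain} to keep all the relevant inverses defined on a single Stolz angle, and pinning down each limit through the identity $F_{\mu_n}(z+\varphi_{\mu_n}(z))=z$.

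For $(1)\Rightarrow(2)$: a weakly convergent sequence is tight, so Lemma \ref{tightdomain} supplies a Stolz angle on which $|F_{\mu_n}(z)-z|=o(z)$ uniformly in $n$; inverting this estimate by a standard Rouch\'e/fixed-point argument places all the left inverses $F_{\mu_n}^{-1}$, hence all the Voiculescu transforms $\varphi_{\mu_n}$, on a common Stolz angle $\Gamma_{\alpha,\beta}$, with $\varphi_{\mu_n}(z)=o(z)$ uniformly in $n$ there. Since $t\mapsto(z-t)^{-1}$ is a bounded continuous test function for each fixed $z\in\mathbb{C}^{+}$, weak convergence gives $G_{\mu_n}\to G_{\mu}$, hence $F_{\mu_n}\to F_{\mu}$, uniformly on compact subsets of $\mathbb{C}^{+}$. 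By property $(2)$ of the $F$-transform, $\Im\varphi_{\mu_n}(z)=\Im F_{\mu_n}^{-1}(z)-\Im z\leq 0$ on $\Gamma_{\alpha,\beta}$, so $\{\varphi_{\mu_n}\}$ is a normal family. If $\psi$ is a subsequential locally uniform limit, then for $z$ high enough in the Stolz angle the uniform bound $\varphi_{\mu_n}(z)=o(z)$ forces $\Im(z+\varphi_{\mu_n}(z))$ to stay above a fixed positive multiple of $|z|$, so $w:=z+\psi(z)\in\mathbb{C}^{+}$ and the points $w_n:=z+\varphi_{\mu_n}(z)$ lie in a fixed compact subset of $\mathbb{C}^{+}$ on which $F_{\mu_n}\to F_{\mu}$ uniformly; letting $n\to\infty$ in $F_{\mu_n}(w_n)=z$ gives $F_{\mu}(w)=z$, i.e. $\psi(z)=\varphi_{\mu}(z)$. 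The identity theorem then gives $\psi=\varphi_{\mu}$ on all of $\Gamma_{\alpha,\beta}$, so the whole sequence converges to $\varphi_{\mu}$, which is $(2)$ together with the final assertion.

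For $(2)\Rightarrow(1)$: the hypothesis $\varphi_{\mu_n}(z)=o(z)$ uniformly gives $F_{\mu_n}^{-1}(z)=z(1+o(1))$ uniformly, and inverting once more (the same Rouch\'e-type argument) shows the $F_{\mu_n}$ are defined and converge uniformly on compact subsets of some common, smaller Stolz angle. The family $\{G_{\mu_n}\}$ is locally bounded on $\mathbb{C}^{+}$ by $|G_{\mu_n}(z)|\leq 1/\Im z$ and converges on this Stolz angle, so by Vitali's theorem it converges locally uniformly on all of $\mathbb{C}^{+}$ to an analytic function $G$. On the Stolz angle $G=1/F$ with $F(z)=z(1+o(1))$, so $zG(z)\to 1$ as $z\to\infty$ nontangentially; feeding this into the Nevanlinna representation of the Pick function $-G$ forces $G=G_{\mu}$ for an honest probability measure $\mu$ — the nontangential limit $1$ is exactly what excludes any escape of mass. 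Hence $\mu_n\to\mu$ weakly, which is $(1)$, and the already-proven implication $(1)\Rightarrow(2)$ identifies $\varphi$ with $\varphi_{\mu}$.

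I expect the main obstacle to be the inversion steps: one must know in advance, from the \emph{uniform} (not merely pointwise) $o(z)$ control, that every $F_{\mu_n}^{-1}$ — respectively every $F_{\mu_n}$ — lives on one fixed Stolz angle, and then transfer convergence across the inversion. The genuinely delicate point is verifying that the candidate limit point $z+\psi(z)$ lands inside $\mathbb{C}^{+}$ rather than on its boundary, so that the functional identity $F_{\mu_n}(w_n)=z$ survives passage to the limit; this is precisely the place where uniformity of the estimate is indispensable.
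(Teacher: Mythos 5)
This lemma is not proved in the paper at all --- it is quoted from Section~5 of \cite{BV1} --- and your argument is essentially the one given there: tightness plus Lemma \ref{tightdomain} to put all the inverses on one Stolz angle, normal families plus the identity $F_{\mu_n}(z+\varphi_{\mu_n}(z))=z$ to identify subsequential limits with $\varphi_{\mu}$, and in the converse direction Vitali plus the normalization $zG(z)\to 1$ to rule out escape of mass. The argument is correct; the only point to make explicit is that $\Im\varphi_{\mu_n}\leq 0$ alone gives normality only in the extended (spherical) sense, and you need the uniform $o(z)$ bound (equivalently, boundedness at a single point) to exclude the constant-$\infty$ limit --- which you do have.
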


\begin{definition}
A probability measure $\mu$ on the real line is said to be
$\boxplus$-infinitely divisible if for every $n \in \mathbb{N}$
there exists a measure $\mu_{1/n}$ such that $\mu = \mu_{1/n}
\boxplus \cdots \boxplus \mu_{1/n}$, where the measure on the right
is the $n$-fold free convolution.
\end{definition}
In dealing with infinitely divisible measures, the following
characterization, first proven in \cite{Pata}, will prove
invaluable.

\begin{theorem}\label{pataid}
Let $\{ \mu_{i,j} \}_{i \in \mathbb{N} , \ j = 1, \ldots , k_{i}}$
be an array of Borel probability measures on $\mathbb{R}$ and $\{
c_{i} \}_{i \in \mathbb{N}}$ be a sequence of real numbers.  Assume
that $\lim_{i \rightarrow \infty} \max_{j = 1,\ldots, k_{i}}
\mu_{i,j} (\{t : |t| > \epsilon \}) = 0$ for all $\epsilon > 0$ and
that the measures $\delta_{c_{i}} \boxplus \mu_{i,1} \boxplus \cdots
 \boxplus \mu_{i, k_{i}}$ converge to a probability measure $\mu$ in the
weak$^{\ast}$ topology.  Then $\mu$ is $\boxplus$-infinitely
divisible.

\end{theorem}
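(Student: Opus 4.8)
The plan is to linearize the free convolution with the Voiculescu transform and then show that the limiting transform $\varphi_\mu$ admits a free L\'evy--Khintchine (Nevanlinna) representation $\varphi_\mu(z) = \gamma + \int_{\mathbb{R}} \frac{1+tz}{z-t}\, d\sigma(t)$ with $\gamma \in \mathbb{R}$ and $\sigma$ a finite positive Borel measure; by the Bercovici--Voiculescu characterization of $\boxplus$-infinite divisibility, the existence of such a representation (equivalently, that $\varphi_\mu$ continues analytically to all of $\mathbb{C}^{+}$ with values in $\mathbb{C}^{-}\cup\mathbb{R}$ and remains $o(z)$ nontangentially) is exactly what must be proved. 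Concretely, write $\nu_i = \delta_{c_i} \boxplus \mu_{i,1} \boxplus \cdots \boxplus \mu_{i,k_i}$. First I would apply Lemma~\ref{BVconv}: the weak$^{\ast}$ convergence $\nu_i \to \mu$ is equivalent to the existence of a Stolz angle $\Gamma_{\alpha,\beta}$ on which all $\varphi_{\nu_i}$ are defined, converge uniformly on compacta to $\varphi_\mu$, and satisfy $\varphi_{\nu_i}(z) = o(z)$ uniformly in $i$; by properties $(4)$ and $(5)$ of the Voiculescu transform, $\varphi_{\nu_i} = c_i + \sum_{j=1}^{k_i} \varphi_{\mu_{i,j}}$ on a (possibly smaller) such angle, and the asymptotic negligibility hypothesis together with Lemma~\ref{tightdomain} guarantees each summand is defined there.

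The technical core is a local expansion of the individual factors near the identity. For a probability measure $\rho$ concentrated near $0$, using the identity $F_\rho(z) - z = -z\big(\int \frac{t}{z-t}\,d\rho(t)\big)\big(1 + \int \frac{t}{z-t}\,d\rho(t)\big)^{-1}$, the near-identity inversion $\varphi_\rho = -(F_\rho - z) + O\big((F_\rho - z)^2\big)$, and the algebraic identity $\frac{1+tz}{z-t} = \frac{1+t^2}{z-t} + t$, one derives
\[
\varphi_\rho(z) = a(\rho) + \int_{\mathbb{R}} \frac{1+tz}{z-t}\, d\tilde\rho(t) + \varepsilon_\rho(z), \qquad a(\rho) = \int_{\mathbb{R}} \frac{t}{1+t^2}\, d\rho(t), \quad d\tilde\rho(t) = \frac{t^2}{1+t^2}\, d\rho(t),
\]
where, on $\Gamma_{\alpha,\beta}$, the remainder $\varepsilon_\rho$ is bounded on compacta by a quantity that is \emph{quadratically} small in $\tilde\rho(\mathbb{R}) + \rho(\{|t|>\epsilon\})$. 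Summing over $j$ and setting $\rho_i = \sum_j \widetilde{\mu_{i,j}}$, $b_i = c_i + \sum_j a(\mu_{i,j}) \in \mathbb{R}$ and $E_i = \sum_j \varepsilon_{\mu_{i,j}}$, this yields $\varphi_{\nu_i}(z) = b_i + \int_{\mathbb{R}} \frac{1+tz}{z-t}\, d\rho_i(t) + E_i(z)$ on the angle.

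Two estimates then close the argument. The first is the free analogue of L\'evy's Lemma: evaluating at a fixed point $iy_0$ with $y_0 \geq 1$ chosen large enough that $iy_0 \in \Gamma_{\alpha,\beta}$, one computes $\Im \int \frac{1+tz}{z-t}\, d\rho_i(t)\big|_{z=iy_0} = -y_0 \int \frac{1+t^2}{y_0^2+t^2}\, d\rho_i(t) \leq -y_0^{-1}\rho_i(\mathbb{R})$, while the same imaginary part equals $\Im\varphi_{\nu_i}(iy_0) - \Im E_i(iy_0)$ since $b_i$ is real; as $\varphi_{\nu_i}(iy_0)$ converges and $E_i$ is small, this forces $\sup_i \rho_i(\mathbb{R}) < \infty$. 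The second is $\max_j \widetilde{\mu_{i,j}}(\mathbb{R}) \to 0$, immediate from the negligibility hypothesis. Combined with the quadratic character of $\varepsilon_\rho$, these bounds give $|E_i(z)| \lesssim \big(\max_j \widetilde{\mu_{i,j}}(\mathbb{R}) + \cdots\big)\rho_i(\mathbb{R}) \to 0$ uniformly on compacta of the angle.

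Consequently $\varphi_\mu(z) = \lim_i \big(b_i + \int_{\mathbb{R}} \frac{1+tz}{z-t}\, d\rho_i(t)\big)$ on $\Gamma_{\alpha,\beta}$. Since $\sup_i \rho_i(\mathbb{R}) < \infty$, I would pass to a subsequence along which $\rho_i$ converges weakly on $[-\infty,+\infty]$; any mass escaping to $\pm\infty$ would contribute a term asymptotic to $-(\text{escaping mass})\,z$, which the uniform bound $\varphi_{\nu_i}(z)=o(z)$ forbids, so the limit $\sigma$ is a finite positive measure on $\mathbb{R}$, and then $b_i$ is automatically forced to converge to some $\gamma \in \mathbb{R}$. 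Hence $\varphi_\mu(z) = \gamma + \int_{\mathbb{R}} \frac{1+tz}{z-t}\, d\sigma(t)$ on the angle; the right-hand side is analytic on $\mathbb{C}^{+}$ with values in $\mathbb{C}^{-}\cup\mathbb{R}$, so $\varphi_\mu$ has this continuation and $\mu$ is $\boxplus$-infinitely divisible. I expect the main obstacle to be the quantitative control of the remainders $\varepsilon_\rho$ in the local expansion --- making the bilinear/quadratic smallness uniform across the whole array --- together with the accompanying bound $\sup_i \rho_i(\mathbb{R}) < \infty$; once these estimates on the transforms are in place, the remainder of the argument is bookkeeping.
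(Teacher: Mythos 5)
The paper does not prove this statement at all --- it is quoted verbatim from the cited reference \cite{Pata} (the Bercovici--Pata limit theorem), so there is no in-paper proof to compare against. Your outline reproduces the standard argument from that source --- linearization via $\varphi$, the local expansion $\varphi_\rho(z)\approx a(\rho)+\int\frac{1+tz}{z-t}\,d\tilde\rho(t)$ for infinitesimal factors, the free L\'evy-type bound $\sup_i\rho_i(\mathbb{R})<\infty$ obtained from $\Im\varphi_{\nu_i}(iy_0)$, and vague compactness on $[-\infty,+\infty]$ with the $o(z)$ condition killing escaping mass --- and it is correctly structured, with the genuinely delicate part (uniform quadratic control of the remainders $\varepsilon_\rho$, and the mild bootstrapping needed because $E_i$ is itself bounded in terms of $\rho_i(\mathbb{R})$) accurately identified rather than glossed over.
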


\begin{definition}
Let $\mu$ be a probability measure.  A decomposition $\mu = \nu
\boxplus \rho$ is said to be nontrivial if neither $\nu$ nor $\rho$
is a Dirac mass. We say that a measure $\mu$ is indecomposable if it
has no nontrivial decomposition.
\end{definition}
Such measures were studied extensively in \cite{Bel1}, \cite{Bel2}
and \cite{JC1}. We close with a theorem, first proven in \cite{BV1}
and \cite{Bel1} from which we derive a corollary that will play a
key role in the proof of Theorem \ref{k1}.

\begin{theorem}\label{belmaintheorem}
Let $\mu$ and $\nu$ be two Borel probability measures on
$\mathbb{R}$, neither of them a Dirac mass.  Then
\begin{enumerate}
\item
The point $a \in \mathbb{R} $ is an atom of the measure $\mu
\boxplus \nu$ if and only if there exist points $b,c \in \mathbb{R}$
such that $a = b + c$ and $\mu(\{ b \}) + \nu(\{ c \}) > 1$.
Moreover, $(\mu \boxplus \nu) (\{ a \}) = \mu(\{ b \}) + \nu(\{ c
\}) - 1$.

\item
The absolutely continuous part of $\mu \boxplus \nu$ is always
nonzero and its density is analytic wherever positive and finite.
More precisely, there exists an open set $U \subseteq \mathbb{R}$ so
that the density function $f(x) = \frac{d(\mu \boxplus \nu)^{ac}
(x)}{dx}$ with respect to Lebesgue measure is locally analytic on
the set U and $(\mu\boxplus\nu)^{ac}(\mathbb{R})=\int_{U} f(x) dx$.

\item
The singular continuous part of $\mu \boxplus \nu$ is zero

\end{enumerate}

\end{theorem}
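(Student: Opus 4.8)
The plan is to push everything onto the boundary behaviour of the Cauchy transform $G_{\mu\boxplus\nu}$, using the subordination identities $F_{\mu\boxplus\nu}(z)=F_\mu(\omega_1(z))=F_\nu(\omega_2(z))$ together with $\omega_1(z)+\omega_2(z)=z+F_{\mu\boxplus\nu}(z)$ and the Nevanlinna representations of $\omega_1,\omega_2,F_{\mu\boxplus\nu}$ recorded above. Throughout, a ``limit at $x\in\mathbb R$'' means the nontangential limit from $\mathbb C^+$, which exists Lebesgue-a.e.\ for each of these Pick-type functions, and I will use the Stieltjes inversion formulas $(\mu\boxplus\nu)(\{x\})=-\lim_{y\downarrow0}iy\,G_{\mu\boxplus\nu}(x+iy)$ and $\frac{d(\mu\boxplus\nu)^{ac}}{dx}(x)=-\tfrac1\pi\Im G_{\mu\boxplus\nu}(x+i0)$ freely, along with the Julia--Carath\'eodory theorem (a Pick function $h$ with $\angle\lim_{z\to a}h(z)=0$ has an angular derivative $h'(a)=\angle\lim\frac{\Im h(z)}{\Im z}\in(0,\infty]$, finite as soon as this ratio has a finite limit) and the chain rule for angular derivatives under composition.

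For part (1), forward direction: if $a$ is an atom of mass $m>0$ then $F_{\mu\boxplus\nu}(z)\to0$ as $z\to a$, so $\omega_1(z)+\omega_2(z)\to a$ and $\Im\omega_1(z),\Im\omega_2(z)\to0$; the value $\infty$ is impossible (it would force $F_\mu(\omega_1(z))\sim\omega_1(z)\to\infty$), so the angular limits $b:=\omega_1(a)$, $c:=\omega_2(a)$ exist in $\mathbb R$ with $b+c=a$. Writing $\frac{\Im F_{\mu\boxplus\nu}(z)}{\Im z}=\frac{\Im F_\mu(\omega_1(z))}{\Im\omega_1(z)}\cdot\frac{\Im\omega_1(z)}{\Im z}$ and using that the left side has finite angular limit $1/m$ while the first factor on the right is $\ge1$, we get $\omega_1'(a)\in[1,\infty)$; hence $F_\mu$ has a simple zero at $b$ with $F_\mu'(b)\in[1,\infty)$, i.e.\ $b$ is an atom of $\mu$ with $p:=\mu(\{b\})=1/F_\mu'(b)$, and symmetrically $q:=\nu(\{c\})=1/F_\nu'(c)$. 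The chain rule gives $F_{\mu\boxplus\nu}'(a)=F_\mu'(b)\,\omega_1'(a)$, i.e.\ $\omega_1'(a)=p/m$, and likewise $\omega_2'(a)=q/m$; differentiating $\omega_1+\omega_2=z+F_{\mu\boxplus\nu}$ at $a$ yields $\omega_1'(a)+\omega_2'(a)=1+1/m$, so $p+q=m+1>1$ and $m=p+q-1$. For the converse, given $b,c$ with $p+q>1$ and $a=b+c$, one runs the angular-derivative calculus in reverse: from $\angle\lim_{z\to b}\frac{F_\mu(z)}{z-b}=1/p$ and the analogue for $\nu$ one shows $\omega_1,\omega_2$ have angular limits $b,c$ at $a$ with angular derivatives $s_1,s_2$, and the two relations $s_1/p=s_2/q$ (comparing $\angle\lim\frac{F_{\mu\boxplus\nu}(z)}{z-a}$ through each subordination) and $s_1+s_2=1+s_1/p$ (the sum relation) force $s_1=p/(p+q-1)$, hence $\angle\lim\frac{F_{\mu\boxplus\nu}(z)}{z-a}=1/(p+q-1)$ and $a$ is an atom of mass $p+q-1$.

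For parts (2) and (3) the organizing observation is the dichotomy: at any $x$ with $0<-\tfrac1\pi\Im G_{\mu\boxplus\nu}(x+i0)<\infty$ one cannot have $\omega_1(x),\omega_2(x)$ both real, since then $F_{\mu\boxplus\nu}(x)=\omega_1(x)+\omega_2(x)-x$ would be real with $G_{\mu\boxplus\nu}(x)=1/F_{\mu\boxplus\nu}(x)$ finite, forcing $\Im G_{\mu\boxplus\nu}(x)=0$. So, say $\Im\omega_1(x)>0$; then $\omega_1$ carries a neighbourhood of $x$ into $\mathbb C^+$, and applying the analytic implicit function theorem to the master equation $F_\nu\big(z+F_\mu(\omega_1(z))-\omega_1(z)\big)=F_\mu(\omega_1(z))$ (got by eliminating $\omega_2$) around $(x,\omega_1(x))$ — the case in which $\omega_2(x)$ is a real point of growth of $\nu$ being handled by the symmetric choice — shows $\omega_1$, hence $G_{\mu\boxplus\nu}=G_\mu\circ\omega_1$, extends analytically to a real neighbourhood of $x$; thus $f=-\tfrac1\pi\Im G_{\mu\boxplus\nu}$ is real-analytic on the open set $U$ where it is positive and finite, and $(\mu\boxplus\nu)^{ac}$ is carried by $U$. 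That $(\mu\boxplus\nu)^{ac}\ne0$ then follows from (1) together with (3): since $p_b+p_{b'}\le1$ and $q_c+q_{c'}\le1$, no two members of $R:=\{(b,c):\mu(\{b\})+\nu(\{c\})>1\}$ can differ in both coordinates, so $R$ lies in a single row or column, and summing $p_b+q_c-1$ over such an $R$ gives total atomic mass $\le1-|R|\,(1-\max_b p_b)<1$ because neither $\mu$ nor $\nu$ is a Dirac mass; as the singular part is purely atomic by (3), the remaining, nonzero mass is absolutely continuous.

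The step I expect to be the real obstacle is (3) itself, the vanishing of the continuous singular part. That part is carried by $E\setminus\{\text{atoms}\}$ with $E=\{x:\Im G_{\mu\boxplus\nu}(x+iy)\to-\infty\text{ as }y\downarrow0\}$, a Lebesgue-null set; by the dichotomy above $E\subseteq\{x:\Im\omega_1(x+i0)=0=\Im\omega_2(x+i0)\}$, on which $F_{\mu\boxplus\nu}(x+i0)=\omega_1(x+i0)+\omega_2(x+i0)-x$ is real, so $E$ reduces to the zero set of this real boundary function; at such a zero the angular derivative of $F_{\mu\boxplus\nu}$ is either finite — an honest atom, already counted in (1) — or $+\infty$, and the crux is to prove that the (Lebesgue-null) set of ``$+\infty$'' points also carries no $(\mu\boxplus\nu)$-mass. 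This requires a delicate study of the boundary regularity of $\omega_1,\omega_2$: one must show that these subordination functions extend continuously to $\mathbb R$ off an $(\mu\boxplus\nu)$-null set, and that at a point where $F_{\mu\boxplus\nu}$ vanishes with infinite angular derivative the identities $F_\mu(\omega_1)=F_\nu(\omega_2)=F_{\mu\boxplus\nu}$ and $\omega_1+\omega_2=z+F_{\mu\boxplus\nu}$ are incompatible with $\mu\boxplus\nu$ charging that point, so that the continuous singular part is forced to be zero.
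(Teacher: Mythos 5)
The paper does not actually prove this theorem: it is quoted from the literature (part (1) is the Bercovici--Voiculescu atom theorem from \cite{BV1}; parts (2) and (3) are Belinschi's regularity results from \cite{Bel1}), so there is no in-paper argument to compare yours against. Judged on its own, your outline correctly identifies the right machinery --- subordination, Nevanlinna representations, Julia--Carath\'eodory, and the chain rule for angular derivatives --- and your forward direction of (1), extracting $m=p+q-1$ from $\omega_1'(a)+\omega_2'(a)=1+1/m$ together with $\omega_i'(a)=p_i/m$, is essentially the correct published argument.

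There are, however, genuine gaps. First, the converse of (1) is not proven: ``running the calculus in reverse'' presupposes that $\omega_1$ and $\omega_2$ have finite real angular limits $b$ and $c$ at $a$, which is precisely what must be established and does not follow formally from knowing only that $\mu$ and $\nu$ have atoms at $b$ and $c$; the standard proof instead obtains the lower bound $(\mu\boxplus\nu)(\{a\})\ge p+q-1$ from the operator inequality $\tau(P\wedge Q)\ge \tau(P)+\tau(Q)-1$ applied to the spectral projections of the two variables at $b$ and $c$, the analytic forward direction then supplying the matching upper bound. Second, and more seriously, you explicitly concede part (3): showing that the Lebesgue-null set where $F_{\mu\boxplus\nu}$ has real boundary value zero with infinite angular derivative carries no $(\mu\boxplus\nu)$-mass \emph{is} Belinschi's theorem, and ``this requires a delicate study of the boundary regularity of $\omega_1,\omega_2$'' is a statement of the problem, not a proof. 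Since your argument that the absolutely continuous part is nonzero in (2) routes through (3), that claim is also left unsupported. Finally, the implicit-function-theorem step in (2) requires the nondegeneracy of the derivative of your master equation at $(x,\omega_1(x))$, which you do not verify; Belinschi's actual argument uses a Denjoy--Wolff fixed-point characterization of $\omega_1$ precisely to control this.
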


\begin{corollary}\label{belcorr}
Let $\mu$ and $\nu$ be as above.  There exists a point $s \in
\mathbb{R}$ such that the cumulative distribution function
$\mathfrak{F}_{\mu \boxplus \nu}$ is continuous and increasing in a
neighborhood of $s$.

\end{corollary}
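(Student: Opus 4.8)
The plan is to read the corollary off directly from Theorem~\ref{belmaintheorem}, combining its three parts. First I would invoke part~(2): since neither $\mu$ nor $\nu$ is a Dirac mass, the absolutely continuous part of $\mu \boxplus \nu$ is nonzero, and there is an open set $U \subseteq \mathbb{R}$ on which the density $f$ is locally analytic, with $(\mu\boxplus\nu)^{ac}(\mathbb{R}) = \int_U f(x)\,dx > 0$. Because $f \geq 0$ and this integral is strictly positive, the set $\{x \in U : f(x) > 0\}$ has positive Lebesgue measure and is therefore uncountable, and on $U$ the function $f$ is honestly continuous.

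Next I would show that $\mu\boxplus\nu$ has only finitely many atoms, which is where the hypothesis that $\mu$ and $\nu$ are not Dirac masses really enters. By part~(1), each atom $a$ of $\mu\boxplus\nu$ can be written $a = b + c$ with $\mu(\{b\}) + \nu(\{c\}) > 1$, so at least one of $\mu(\{b\})$ and $\nu(\{c\})$ exceeds $1/2$. A probability measure has at most one atom of mass greater than $1/2$; write $b_0$ and $c_0$ for the (possibly nonexistent) such atoms of $\mu$ and $\nu$. Then every atom of $\mu\boxplus\nu$ is either of the form $b_0 + c$ with $\nu(\{c\}) > 1 - \mu(\{b_0\})$ or of the form $b + c_0$ with $\mu(\{b\}) > 1 - \nu(\{c_0\})$. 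Since $\mu$ is not a Dirac mass, $\mu(\{b_0\}) < 1$, so $1 - \mu(\{b_0\}) > 0$ and there are at most $\lfloor 1/(1 - \mu(\{b_0\})) \rfloor$ admissible values of $c$; symmetrically for the values of $b$. Hence the atom set of $\mu\boxplus\nu$ is finite (and empty if neither $\mu$ nor $\nu$ has an atom of mass greater than $1/2$).

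Finally I would combine the two observations. Since $f$ is positive on an uncountable subset of $U$ while $\mu\boxplus\nu$ has only finitely many atoms, I can choose a point $x_0 \in U$ with $f(x_0) > 0$ that is not an atom of $\mu\boxplus\nu$. As $f$ is continuous and positive at $x_0$ and the finite atom set is closed and avoids $x_0$, there is $\varepsilon > 0$ with $I := (x_0 - \varepsilon, x_0 + \varepsilon) \subseteq U$, with $f > 0$ throughout $I$, and with $I$ containing no atom of $\mu\boxplus\nu$. For $t_1 < t_2$ in $I$, part~(3) (vanishing of the singular continuous part) together with the absence of atoms in $I$ gives $\mathfrak{F}_{\mu\boxplus\nu}(t_2) - \mathfrak{F}_{\mu\boxplus\nu}(t_1) = (\mu\boxplus\nu)((t_1,t_2]) = \int_{t_1}^{t_2} f(x)\,dx > 0$, so $\mathfrak{F}_{\mu\boxplus\nu}$ is strictly increasing on $I$; and the absence of atoms in $I$ makes it continuous there. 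Taking $s = x_0$ finishes the proof.

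I expect the only genuinely non-mechanical step to be the finiteness of the atom set in the second paragraph; everything else is bookkeeping with the Lebesgue decomposition and the three conclusions of Theorem~\ref{belmaintheorem}. The one point to keep honest is that the strict positivity of $\int_U f$ is needed precisely to ensure that a positive-measure, hence uncountable, set of candidate points $x_0$ remains after discarding the finitely many atoms.
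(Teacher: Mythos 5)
Your proof is correct and follows essentially the same route as the paper: both deduce from Theorem~\ref{belmaintheorem}(1) that $\mu\boxplus\nu$ has only finitely many atoms and then select an open subset of the set $U$ from part~(2) that avoids them, using part~(3) to conclude that $\mathfrak{F}_{\mu\boxplus\nu}$ is continuous and strictly increasing there. Your counting of the atoms (via the observation that a probability measure has at most one atom of mass greater than $1/2$) is organized a little differently from the paper's pairwise argument, but it rests on the same superadditivity condition $\mu(\{b\})+\nu(\{c\})>1$ and is, if anything, slightly cleaner.
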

\begin{proof}

First observe that $(1)$ implies that $\mu \boxplus \nu$ has only
finitely many point masses.  To see this, assume that $a = a_{1} +
a_{2}$ and $b = b_{1} + b_{2}$ are point masses of $\mu \boxplus
\nu$ where $$(\mu \boxplus \nu)(\{ a \}) = \mu(\{ a_{1} \}) + \nu(\{
a_{2} \}) - 1$$
 $$(\mu \boxplus \nu)(\{ b \}) = \mu(\{ b_{1} \}) + \nu(\{ b_{2} \}) -
 1$$
Further assume that $a_{1} \neq b_{1}$.  This implies that $$\mu(\{
a_{1} \}) + \mu(\{ b_{1} \}) \leq 1 $$  Combined with the previous
equalities, this implies that $1 < \nu(\{ a_{2} \}) + \nu(\{ b_{2}
\})$ so that $a_{2} = b_{2}$.  This implies that, under these
assumptions, there are at most $(1 - \nu(\{ b_{2} \}))^{-1}$ point
masses of $\mu \boxplus \nu$.

Note that the nonatomic part of $\mu \boxplus \nu$ has mass strictly
greater than $0$.  To see this, let $\{x_{i} \}_{i = 1}^{n}$ be the
set of point masses of $\mu \boxplus \nu$.  Let $y$ and $\{z_{i}
\}_{i = 1}^{n}$ satisfy $y + z_{i} = x_{i}$ and $\nu (y) + \mu
(z_{i}) - 1 = (\mu \boxplus \nu )(x_{i})$ for $i = 1 , 2 , \ldots ,
n$ where these points arise as in the previous paragraph. Summing
over both sides of the equation and recalling that $\nu (y) < 1$, we
have that
$$\sum_{i = 1}^{n} (\mu \boxplus \nu) (x_{i}) = n\nu(y) - n + \sum_{i = 1}^{n}
\mu (z_{i}) < n - n + \mu(\mathbb{R}) = 1$$

Thus, for $U$ as in the previous theorem, pick an open subset $V
\subseteq U$ that contains no point masses.  This set satisfies our
claim.
\end{proof}

\section{Compactness Results for Additive Free Convolution} We
begin our investigation with a technical lemma.
\begin{lemma}\label{extensionlemma}
Let $\mu$ be a probability measure on $\mathbb{R}$.  Let $\Omega$
denote a Stolz angle on which  $F_{\mu}^{-1}$ is defined. If $\mu =
\mu_{1} \boxplus \mu_{2}$ is any decomposition of $\mu$, then
$\varphi_{\mu_{1}}$ and $\varphi_{\mu_{2}}$ have analytic extensions
to $\Omega$.  These extensions satisfy $\Im{(\varphi_{\mu_{1}}(z))}$
and $\Im{(\varphi_{\mu_{2}}(z))} \leq 0$ for all $z \in \Omega$

\end{lemma}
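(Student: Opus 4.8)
The plan is to build the extension of $\varphi_{\mu_i}$ explicitly from the analytic subordination functions attached to the decomposition $\mu=\mu_1\boxplus\mu_2$. Let $\omega_1,\omega_2:\mathbb{C}^{+}\to\mathbb{C}^{+}$ be the subordination functions, so that $F_\mu(z)=F_{\mu_i}(\omega_i(z))$ on $\mathbb{C}^{+}$ and $\omega_i(iy)/(iy)\to1$ as $y\uparrow\infty$. Since $\Omega$ is a Stolz angle on which $F_\mu^{-1}$ is defined, $F_\mu^{-1}$ is analytic on $\Omega$ and, being the inverse of $F_\mu$ restricted to a subdomain of $\mathbb{C}^{+}$, satisfies $F_\mu^{-1}(\Omega)\subseteq\mathbb{C}^{+}$. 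Hence the formula
$$\psi_i(z):=\omega_i(F_\mu^{-1}(z))-z$$
defines an analytic function on all of $\Omega$, and I claim this is the sought-after extension of $\varphi_{\mu_i}$.

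First I would check that $\psi_i$ agrees with $\varphi_{\mu_i}$ on the intersection of $\Omega$ with the Stolz angle on which $\varphi_{\mu_i}=F_{\mu_i}^{-1}-\mathrm{id}$ was originally defined. Fix $z$ deep in that intersection and put $w:=F_\mu^{-1}(z)\in\mathbb{C}^{+}$. By property (1), $F_\mu(\zeta)=\zeta+o(|\zeta|)$ uniformly in Stolz angles, so $w=z+o(|z|)$; and since $\omega_i$ has a Nevanlinna representation of the same type as $F_\mu$, the normalization $\omega_i(iy)/(iy)\to1$ gives $\omega_i(\zeta)=\zeta+o(|\zeta|)$ nontangentially. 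Hence $\omega_i(w)=z+o(|z|)$, so for $z$ far enough out $\omega_i(w)$ lies in the region where $F_{\mu_i}$ is univalent; since $F_{\mu_i}(\omega_i(w))=F_\mu(w)=z$, this forces $\omega_i(w)=F_{\mu_i}^{-1}(z)$, whence $\psi_i(z)=\varphi_{\mu_i}(z)$. As any two Stolz angles overlap in a nonempty open set and $\Omega$ is connected, the identity theorem shows $\psi_i$ is the unique analytic continuation of $\varphi_{\mu_i}$ to $\Omega$.

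For the imaginary part bound the subordination relation does all the work. Given $z\in\Omega$, set $w:=F_\mu^{-1}(z)\in\mathbb{C}^{+}$ and $\zeta:=\omega_i(w)\in\mathbb{C}^{+}$. Property (2) of the $F$-transform, applied to $\mu_i$, gives $\Im(F_{\mu_i}(\zeta))\geq\Im(\zeta)$; but $F_{\mu_i}(\zeta)=F_\mu(w)=z$, so $\Im(z)\geq\Im(\omega_i(w))$. Therefore
$$\Im(\varphi_{\mu_i}(z))=\Im(\psi_i(z))=\Im(\omega_i(w))-\Im(z)\leq 0 .$$

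I expect the only genuinely delicate point to be the domain bookkeeping in the second paragraph — confirming that $F_\mu^{-1}$ maps $\Omega$ into $\mathbb{C}^{+}$ so the composition $\omega_i\circ F_\mu^{-1}$ is legitimate, and that for $z$ sufficiently deep in a Stolz angle $\omega_i(w)$ falls in the univalence region of $F_{\mu_i}$, so that $\omega_i(w)$ genuinely equals $F_{\mu_i}^{-1}(z)$. Both are routine consequences of the uniform $o(|z|)$ asymptotics of $F_\mu$ and $\omega_i$ together with $\omega_i(iy)/(iy)\to1$; everything else follows immediately from $F_\mu=F_{\mu_i}\circ\omega_i$ and the inequality $\Im F_{\mu_i}\geq\Im(\cdot)$.
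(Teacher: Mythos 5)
Your proof is correct and follows essentially the same route as the paper: both define the extension as $\omega_i\circ F_\mu^{-1}-\mathrm{id}$ via the subordination functions, identify it with $\varphi_{\mu_i}$ deep in a common Stolz angle using the $o(|z|)$ asymptotics, and obtain $\Im\varphi_{\mu_i}\leq 0$ from the identity $F_{\mu_i}(\omega_i(F_\mu^{-1}(z)))=z$ together with the fact that $F_{\mu_i}$ increases the imaginary part. The only cosmetic difference is that you invoke the subordination relation on all of $\mathbb{C}^{+}$ to get that identity directly, whereas the paper obtains it by analytic continuation from a small Stolz angle.
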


\begin{proof}
By assumption, $\varphi_{\mu}$ exists and is analytic on all of
$\Omega$ and, since $F_{\mu}$ increases the imaginary part,
$\varphi_{\mu} (z) \leq 0$ for all $z \in \Omega$.

Turning our attention to $\mu_{1}$ consider the subordination
function $\omega$ satisfying $F_{\mu} (z) = F_{\mu_{1}} (\omega
(z))$ for $z \in \mathbb{C}^{+}$.  Recall that
$$\lim_{y \uparrow \infty} \frac{F_{\mu} (iy)}{iy} = \lim_{y \uparrow \infty} \frac{F_{\mu_{1}} (iy)}{iy} = \lim_{y \uparrow \infty} \frac{\omega
(iy)}{iy} = 1$$

These facts imply that on a sufficiently small Stolz angle, all
three functions are invertible and we have the following:

$$ \omega \circ F_{\mu}^{-1}
= F_{\mu_{1}}^{-1}$$ Since the left hand side is defined on
$\Omega$, the right hand side must also extend to $\Omega$.  This
implies that this implies that the Voiculesu transform of $\mu_{1}$
extends to $\Omega$ and, by abuse of notation, we continue to call
this extension $\varphi_{\mu_{1}}$

With regard to the negativity the imaginary part of our analytic
extension, note that on a large enough Stolz angle, $F_{\mu_{1}}$
acts as a left inverse for $F_{\mu_{1}}^{-1}$ and $\omega \circ
F_{\mu}^{-1} = F_{\mu_{1}}^{-1}$.  Thus, $F_{\mu_{1}} (\omega (
F_{\mu}^{-1} (z) )) = z$.  As the left hand side of the equation is
defined and analytic for all $z \in \Omega$, by analytic
continuation, the same equality holds for all $z \in \Omega$.  Thus,
$$\varphi_{\mu_{1}} (z) = \omega (F_{\mu}^{-1} (z)) - z = \omega (F_{\mu}^{-1} (z))- F_{\mu_{1}} (\omega (
F_{\mu}^{-1} (z) ))$$ for all $z \in \Omega$.  As $F_{\mu_{1}}$
increases the imaginary part, our result holds.
\end{proof}

With this preliminary result out of the way, we now begin proving
tightness results.  The \textit{diameter} of a subset $\sigma
\subset \mathbb{R}$ is defined as is usual : $\textrm{diam} (\sigma)
= \sup_{x,y \in \sigma} |x - y|$.  The \textit{support} of a measure
$\mu$ (in symbols, $\textrm{supp} (\mu)$) is the complement of the
largest open $\mu$-null set.

\begin{theorem}\label{tightcompact}
Let $\mu$ be a probability measure with compact support and consider
a decomposition $\mu = \mu_{1} \boxplus \mu_{2}$.  Then
$\textrm{diam}(supp(\mu_{i})) \leq \textrm{diam}(supp(\mu))$ with
equality if and only if one of the $\mu_{i}$ is  a Dirac mass.
\end{theorem}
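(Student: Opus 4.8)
The plan is operator-theoretic: realize the decomposition inside a $W^{*}$-probability space and exploit the trace-preserving conditional expectation onto one of the factors. Choose $(A,\tau)$ containing freely independent self-adjoint operators $X_{1},X_{2}$ affiliated with $A$ with distributions $\mu_{1},\mu_{2}$, and set $X=X_{1}+X_{2}$, whose distribution is $\mu$ and which is bounded because $\textrm{supp}(\mu)$ is compact. I would first record that $X_{1}$ (hence $X_{2}=X-X_{1}$) is bounded, so that both sides of the asserted inequality are finite: since $\mu$ is compactly supported it has finite variance, and variance is additive under $\boxplus$, so $\mu_{1},\mu_{2}$ have finite variance and $X_{1},X_{2}\in L^{1}(A,\tau)$. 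If $\textrm{supp}(\mu_{1})$ were unbounded above, pick $s_{N}\in\textrm{supp}(\mu_{1})$ with $s_{N}\to+\infty$, let $e_{N}=\chi_{(s_{N}-1,\,s_{N}+1)}(X_{1})\neq0$ and $\eta_{N}=e_{N}\Omega/\|e_{N}\Omega\|$ for the cyclic trace vector $\Omega$; then, using $\tau(e_{N}X_{2})=\tau(e_{N})\tau(X_{2})$ by freeness,
$$\langle X\eta_{N},\eta_{N}\rangle=\frac{\tau(e_{N}X_{1})}{\tau(e_{N})}+\frac{\tau(e_{N}X_{2})}{\tau(e_{N})}\ \geq\ (s_{N}-1)+\tau(X_{2})\ \longrightarrow\ \infty,$$
contradicting $\|X\|<\infty$; the symmetric argument rules out unboundedness below, so $X_{1},X_{2},X$ are all bounded.

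Next, translate $\mu_{2}$ so that $\tau(X_{2})=0$ (this only translates $\mu$ and changes no diameter). Let $A_{1}$ be the von Neumann algebra generated by $X_{1}$ and $E_{1}\colon A\to A_{1}$ the trace-preserving conditional expectation. Since $X_{2}$ is free from $A_{1}$ and $\tau(X_{2})=0$ we get $E_{1}(X_{2})=0$, hence $E_{1}(X-c)=X_{1}-c$ for every $c\in\mathbb{R}$; as $E_{1}$ is a unital positive contraction, $\|X_{1}-c\|\leq\|X-c\|$. For a bounded self-adjoint $Y$ with distribution $\nu$ one has $\textrm{diam}(\textrm{supp}(\nu))=2\min_{c\in\mathbb{R}}\|Y-c\|$, so passing to the infimum over $c$ yields $\textrm{diam}(\textrm{supp}(\mu_{1}))\leq\textrm{diam}(\textrm{supp}(\mu))$, and the case $i=2$ is symmetric.

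For the equality case, suppose $\textrm{diam}(\textrm{supp}(\mu_{1}))=\textrm{diam}(\textrm{supp}(\mu))=:d$; I must show $\mu_{2}$ is a Dirac mass (the converse being immediate since $\mu_{1}\boxplus\delta_{c}$ is a translate of $\mu_{1}$). If $d=0$ then $\mu_{1}$ is a Dirac mass and we are done, so assume $d>0$ and further translate $\mu_{1}$ so that $\sup\textrm{supp}(\mu_{1})=d/2=-\inf\textrm{supp}(\mu_{1})$, keeping $\tau(X_{2})=0$. Then $c=0$ is the unique minimizer of $c\mapsto\|X_{1}-c\|$, with value $d/2$; since $\|X_{1}-c\|\leq\|X-c\|$ and $\min_{c}\|X-c\|=\tfrac12\textrm{diam}(\textrm{supp}(\mu))=d/2$, the minimizer of $\|X-c\|$ must also be $c=0$, whence $\|X\|=d/2$ and $\sigma(X)\subseteq[-d/2,d/2]$. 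Now set $e_{+}=\chi_{[d/2-\epsilon,\,d/2]}(X_{1})\neq0$ and $\eta_{\epsilon}=e_{+}\Omega/\|e_{+}\Omega\|$; expanding $X^{2}=X_{1}^{2}+X_{1}X_{2}+X_{2}X_{1}+X_{2}^{2}$, freeness together with $\tau(X_{2})=0$ gives $\tau(e_{+}X_{1}X_{2})=\tau(e_{+}X_{2}X_{1})=0$ and $\tau(e_{+}X_{2}^{2})=\tau(e_{+})\tau(X_{2}^{2})$, while $\tau(e_{+}X_{1}^{2})/\tau(e_{+})\geq(d/2-\epsilon)^{2}$, so
$$\Big(\tfrac{d}{2}\Big)^{2}=\|X\|^{2}\ \geq\ \|X\eta_{\epsilon}\|^{2}=\frac{\tau(e_{+}X^{2})}{\tau(e_{+})}\ \geq\ \Big(\tfrac{d}{2}-\epsilon\Big)^{2}+\tau(X_{2}^{2}).$$
Letting $\epsilon\downarrow0$ forces $\tau(X_{2}^{2})\leq0$, so $X_{2}=0$ and $\mu_{2}=\delta_{0}$.

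The main obstacle I anticipate is the preliminary reduction in the first paragraph — ensuring that the divisors $\mu_{i}$ are themselves compactly supported and handling the bookkeeping for a priori unbounded affiliated operators, which I sidestep by only ever pairing operators against the trace. The one genuinely delicate point in the equality argument is pinning down the location of $\sigma(X)$ (forcing the optimal shift to be $c=0$); the rest is a routine freeness computation.
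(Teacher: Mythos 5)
Your argument is correct in substance but follows a genuinely different route from the paper. The paper works entirely on the complex-analytic side: it writes the subordination functions $\omega_{i}$ (with $F_{\mu}(z)=F_{\mu_{i}}(\omega_{i}(z))$) in Nevanlinna form, observes that $F_{\mu}$ continues analytically across $\mathbb{R}\setminus[\alpha,\beta]$ so that $\sigma((-\infty,\alpha))=0$, uses $\sigma=\sigma_{1}+\sigma_{2}$ to conclude the same for each $\sigma_{i}$, and then the bound $\omega_{i}'\geq 1$ on $(-\infty,\alpha)$ yields $\textrm{supp}(\mu_{i})\subseteq[\alpha+r_{i}-m_{i},\,\beta+r_{i}-m_{i}]$, with strict containment exactly when $\sigma_{i}\neq 0$, i.e.\ when $\mu_{i}$ is not a translate of $\mu$. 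That single computation simultaneously proves the divisors are compactly supported, locates their supports, and settles the equality case. Your operator-algebraic route --- the contraction $\|E_{1}(X-c)\|\leq\|X-c\|$ combined with $\textrm{diam}(\textrm{supp}(\nu))=2\min_{c}\|Y-c\|$, and the edge-of-spectrum second-moment computation $\|X\eta_{\epsilon}\|^{2}\geq(d/2-\epsilon)^{2}+\tau(X_{2}^{2})$ for the equality case --- is shorter and more conceptual once the operators are known to be bounded, and it avoids subordination entirely. The freeness computations ($E_{1}(X_{2})=\tau(X_{2})1$, $\tau(e_{+}X_{1}X_{2})=0$, $\tau(e_{+}X_{2}^{2})=\tau(e_{+})\tau(X_{2}^{2})$) are all correct, and the localization of the optimal shift at $c=0$ is handled properly.

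The one soft spot is your preliminary reduction to bounded $X_{1},X_{2}$. The fact itself (divisors of a compactly supported measure are compactly supported) is true and is exactly what the paper's subordination argument delivers as a by-product; it is also available in Bercovici--Voiculescu. But your justification that $X_{2}\in L^{1}$ --- ``variance is additive under $\boxplus$, so $\mu_{1},\mu_{2}$ have finite variance'' --- is circular as stated: additivity of variance is a statement about measures already known to have finite second moments. What you actually need is the (true, but separate) result that finite moments of $\mu_{1}\boxplus\mu_{2}$ are inherited by the factors; without it, the quantity $\tau(X_{2})$ appearing in your first displayed inequality is not a priori defined, and the freeness identity $\tau(e_{N}X_{2})=\tau(e_{N})\tau(X_{2})$ has no content. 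Either cite that moment-inheritance result, or note that compact support of the divisors follows directly from the subordination/Nevanlinna argument; with that repaired, the rest of your proof stands.
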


\begin{proof}

Consider the subordination functions $\omega_{i}$ satisfying
$F_{\mu}(z) = F_{\mu_{i}}(\omega_{i} (z))$ with Nevallina
representations:

$$\omega_{i} (z) = r_{i} + z +  \int{ \frac{1 + zt}{t-z}}
d\sigma_{i}(t)$$

$$ F_{\mu} (z) = r + z + \int \frac{1 + zt}{t-z} d\sigma(t) $$

Let $\alpha, \beta \in \mathbb{R}$ satisfy $\textrm{supp}(\mu)
\subseteq [\alpha, \beta]$, where the interval on the right side is
the smallest for which this containment holds.  Observe that
$G_{\mu}$ has a nonzero real analytic continuation across $(-\infty,
\alpha)$ so that the same must hold for $F_{\mu}$.  This implies
that $\sigma((-\infty, \alpha))  = 0$.  Since $\sigma = \sigma_{1} +
\sigma_{2}$, we also have that $\sigma_{i}(-\infty , \alpha) = 0$ so
that, by the Schwartz Lemma, $\omega_{i}$ admits analytic
continuation across $(-\infty, \alpha)$. Furthermore, $\omega_{i}$
is increasing on $(-\infty , \alpha)$ so that $F_{\mu_{i}} = F_{\mu}
\circ \omega_{i}^{-1}$ has an analytic continuation to $\omega_{i}
(-\infty , \alpha)$. This tells us that $\textrm{supp}(\mu_{i})
\subset \mathbb{R} \setminus \omega_{1} ((-\infty, \alpha))$.

Now, observe that $\omega_{i} (x) - x \rightarrow r_{i} - m_{i}$ as
$x \rightarrow \pm \infty$ where $m_{i}$ is the first moment of
$\sigma_{i}$. Differentiating the Nevanlinna representation of
$\omega_{i}$, it is clear that $\omega'_{i}(x) \geq 1$ for $x <
\alpha$. Thus,
$$\omega_{i} (\alpha - \epsilon) = \int_{x}^{\alpha - \epsilon} \omega'_{i}(t)dt + \omega_{i}(x) \geq \int_{x}^{\alpha - \epsilon} dt + x + ( \omega_{i} (x) - x) \rightarrow \alpha - \epsilon + r_{i} - m_{i}$$
It follows that $ (-\infty , \alpha + r_{i} - m_{i}) \subseteq
\omega_{i} ((-\infty , \alpha))$. Similarly, $(\beta + r_{i} -
m_{i}, \infty) \subseteq \omega_{i} (\beta , \infty) $. These two
observations imply that $\textrm{supp} (\mu_{i}) \subseteq [\alpha +
r_{i} - m_{1} , \beta + r_{i} - m_{i}]$.  Hence, we have that
$\textrm{diam}(\textrm{supp}(\mu_{i})) \leq
\textrm{diam}(\textrm{supp}(\mu))$.

With regard the equality claim, observe that our measure $\sigma_{1}
= 0$ implies that $\mu_{1}$ is a translation of $\mu$.  This implies
that $\mu_{2}$ is a Dirac mass.    Thus, by assuming that neither
$\mu_{1}$ nor $\mu_{2}$ is a Dirac mass, we have that $\sigma_{i}
\neq 0$ for $i = 1,2$.  This implies that $\omega_{i}'(t) > 1$ for
$t < \alpha$.  It follows that $\textrm{supp} (\mu_{i}) \subsetneqq
[\alpha + r_{i} - m_{1} , \beta + r_{i} - m_{i}]$, and our claim
follows.
\end{proof}

In what follows, for $O \subset \mathbb{R}$, we let
$\textrm{conv}(O)$ be the smallest interval containing the set $O$.

\begin{lemma}\label{compact2}
Let $\mu_{1}$ and $\mu_{2}$ be probability measures with compact
support.  Then $\textrm{supp} (\mu_{1} \boxplus \mu_{2}) \subseteq
\textrm{conv} (\textrm{supp} (\mu_{1}) + \textrm{supp} (\mu_{2}))$.
\end{lemma}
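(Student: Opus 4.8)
The plan is to reduce the claim to an elementary operator inequality. First I would write $\alpha_i = \inf\,\mathrm{supp}(\mu_i)$ and $\beta_i = \sup\,\mathrm{supp}(\mu_i)$ for $i = 1,2$; these are finite and attained because each $\mathrm{supp}(\mu_i)$ is compact, so $\mathrm{conv}(\mathrm{supp}(\mu_i)) = [\alpha_i,\beta_i]$. Since $\mathrm{supp}(\mu_1)+\mathrm{supp}(\mu_2)$ is contained in $[\alpha_1+\alpha_2,\beta_1+\beta_2]$ and contains both of its endpoints, we have $\mathrm{conv}(\mathrm{supp}(\mu_1)+\mathrm{supp}(\mu_2)) = [\alpha_1+\alpha_2,\beta_1+\beta_2]$. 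Hence it suffices to prove $\mathrm{supp}(\mu_1\boxplus\mu_2)\subseteq[\alpha_1+\alpha_2,\beta_1+\beta_2]$.

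To do this I would realize the free convolution operator-theoretically. For $i=1,2$ let $A_i = L^\infty(\mathbb{R},\mu_i)$, a commutative $W^{\ast}$ probability space with trace $\tau_i(f)=\int f\,d\mu_i$, and let $X_i\in A_i$ be the bounded self-adjoint element $X_i(t)=t$. The spectrum of $X_i$ is the essential range of the identity function with respect to $\mu_i$, namely $\mathrm{supp}(\mu_i)$, so $\alpha_i 1\le X_i\le\beta_i 1$ in $A_i$. Forming the free product $(A,\tau)=(A_1,\tau_1)\ast(A_2,\tau_2)$, the images of $X_1$ and $X_2$ are freely independent, and by Voiculescu's definition of $\boxplus$ (see \cite{VDN}) the distribution of the self-adjoint element $X_1+X_2$ with respect to $\tau$ is exactly $\mu_1\boxplus\mu_2$.

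Now the operator inequalities $\alpha_i 1\le X_i\le\beta_i 1$ add to give $(\alpha_1+\alpha_2)1\le X_1+X_2\le(\beta_1+\beta_2)1$, whence $\sigma(X_1+X_2)\subseteq[\alpha_1+\alpha_2,\beta_1+\beta_2]$. Since the distribution of a self-adjoint operator is always carried by its spectrum (the spectral projection of any Borel set disjoint from the spectrum vanishes), this gives $\mathrm{supp}(\mu_1\boxplus\mu_2)\subseteq\sigma(X_1+X_2)\subseteq[\alpha_1+\alpha_2,\beta_1+\beta_2]$, which is the asserted inclusion.

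The argument is short, and the only step needing care is the legitimacy of the realization: that the $W^{\ast}$ free product of tracial probability spaces is again a $W^{\ast}$ probability space and that the distribution of $X_1+X_2$ there is $\mu_1\boxplus\mu_2$. This is standard; note that faithfulness of $\tau$ on $A$ is not required, since I only use the containment of a distribution in a spectrum together with the elementary bound on $\sigma(X_1+X_2)$. A purely function-theoretic proof, tracking analytic continuations of $F_{\mu_1\boxplus\mu_2}$ across $(-\infty,\alpha_1+\alpha_2)$ and $(\beta_1+\beta_2,\infty)$ by composing the Nevanlinna representations of the subordination functions $\omega_i$ as in the proof of Theorem \ref{tightcompact}, is also possible but noticeably more delicate; there I would expect controlling $\omega_i$ on those unbounded half-lines to be the main obstacle, and I would not pursue it given that the operator route is available.
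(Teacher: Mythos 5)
Your proof is correct and follows essentially the same route as the paper: realize $\mu_1\boxplus\mu_2$ as the distribution of a sum of freely independent self-adjoint operators, observe that the sum of the shifted operators is positive (the paper cites Theorem 4.16 of \cite{BV1} for this; your direct operator inequality $\alpha_i 1\le X_i\le\beta_i 1$ is the same fact), and conclude via the containment of the distribution's support in the spectrum. The only cosmetic difference is that you spell out the free-product realization explicitly.
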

\begin{proof}
Let $x_{1}$ and $x_{2}$ be freely independent random variables in a
$W^{*}$ probability space $(A, \tau)$ with respective distributions
$\mu_{1}$ and $\mu_{2}$. Let $c_{i} = \inf \{ t \in \sigma (x_{i})
\}$ and $d_{i} = \sup \{ t \in \sigma (x_{i}) \}$. It is precisely
the content of Theorem $4.16$ in \cite{BV1} that $x_{1} -c_{1}I +
x_{2} - c_{2}I$ is a positive random variable.  Thus, its spectrum
is contained in the positive real numbers. Since the spectrum of a
self-adjoint operator contains the support of its distribution, we
have that the distribution of $x_{1} -c_{1}I + x_{2} - c_{2}I$ is
supported in the positive reals. Similarly, the distribution of
$x_{1} -d_{1}I + x_{2} - d_{2}I$ is supported in the negative reals.
Thus, $\textrm{supp}(\mu_{1} \boxplus \mu_{2}) \subseteq [c_{1} +
c_{2} , d_{1} + d_{2}]$, which is equivalent to our claim.
\end{proof}

We now extend the above theorem to measures with unbounded support.
For a measure $\mu$, recall that $\mathfrak{F}_{\mu}$ denotes its
cumulative distribution function.  We shall let
$\Omega_{\epsilon}(\mu) = \{t\in \mathbb{R} : \epsilon <
\mathfrak{F}_{\mu} (t) < 1- \epsilon \}$.

\begin{theorem}\label{noncompact1}
Let $\mu = \mu_{1} \boxplus \mu_{2}$.  For $\epsilon > 0$ we have
that $\overline{\Omega_{\epsilon} (\mu)} \subseteq
\overline{\Omega_{\epsilon/2} (\mu_{1})} +
\overline{\Omega_{\epsilon/2} (\mu_{2})}$.

\end{theorem}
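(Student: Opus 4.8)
The plan is to reduce the statement to an operator inequality in the spirit of Lemma~\ref{compact2}, applied to spectral projections attached to quantiles rather than to full spectra. Since for any probability measure $\nu$ each set $\{t:\mathfrak{F}_\nu(t)>s\}$ and $\{t:\mathfrak{F}_\nu(t)<s\}$ is a half-line, the closure $\overline{\Omega_\epsilon(\mu)}$ is a compact interval $[a,b]$, with $a$ the least $\epsilon$-quantile and $b$ the greatest $(1-\epsilon)$-quantile of $\mu$; likewise $\overline{\Omega_{\epsilon/2}(\mu_i)}=[c_i,d_i]$. Thus it suffices to prove $c_1+c_2\le a$ and $b\le d_1+d_2$. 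Free convolution commutes with the reflection $\nu\mapsto\check\nu$ (pushforward under $t\mapsto-t$), and $\overline{\Omega_\delta(\check\nu)}=-\overline{\Omega_\delta(\nu)}$, so replacing $\mu_i$ by $\check\mu_i$ interchanges the two inequalities; hence I only need to establish the right-hand one, $b\le d_1+d_2$. (In the degenerate configurations where some $\Omega_{\epsilon/2}(\mu_i)$ is empty — precisely when $\mu_i$ carries an atom of mass at least $1-\epsilon$ — one argues directly, using the translation identity $\mathfrak{F}_{\nu\boxplus\delta_r}(t)=\mathfrak{F}_\nu(t-r)$ and the trivial inclusion $\Omega_\epsilon(\nu)\subseteq\Omega_{\epsilon/2}(\nu)$; this routine bookkeeping I omit.)

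The heart of the argument is the compactly supported case: if $\nu_1,\nu_2$ have compact support, $\nu=\nu_1\boxplus\nu_2$, and $e_j$ is the greatest $(1-\epsilon/2)$-quantile of $\nu_j$, then $\mathfrak{F}_\nu(e_1+e_2)\ge 1-\epsilon$. To prove this, realize $\nu_1,\nu_2$ as the distributions of freely independent bounded self-adjoint $x_1,x_2$ in a $W^\ast$-probability space $(A,\tau)$, so $x_1+x_2$ has distribution $\nu$. Let $q_j=\mathbf{1}_{(-\infty,e_j]}(x_j)$, so $\tau(q_j)=\mathfrak{F}_{\nu_j}(e_j)\ge 1-\epsilon/2$, and set $f=q_1\wedge q_2$. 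The standard trace estimate in a finite von Neumann algebra gives $\tau(f)\ge\tau(q_1)+\tau(q_2)-1\ge 1-\epsilon$. Since $f\le q_j$, any unit vector $\xi\in\operatorname{ran}(f)$ satisfies $\langle x_j\xi,\xi\rangle=\langle q_jx_jq_j\xi,\xi\rangle\le e_j$ by functional calculus, so $\langle(x_1+x_2)\xi,\xi\rangle\le e_1+e_2$; as $x_1+x_2$ is bounded this shows $\mathbf{1}_{(e_1+e_2,\infty)}(x_1+x_2)\wedge f=0$, whence $\tau\big(\mathbf{1}_{(e_1+e_2,\infty)}(x_1+x_2)\big)\le 1-\tau(f)\le\epsilon$, i.e.\ $\mathfrak{F}_\nu(e_1+e_2)\ge 1-\epsilon$.

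To remove compactness, I would truncate: let $\mu_i^{(n)}$ be the pushforward of $\mu_i$ under $t\mapsto\max(-n,\min(n,t))$. Then $\mu_i^{(n)}$ has compact support, $\mathfrak{F}_{\mu_i^{(n)}}$ coincides with $\mathfrak{F}_{\mu_i}$ on $(-n,n)$, and $\mu_i^{(n)}\to\mu_i$ weakly, so by continuity of additive free convolution (Section~2) $\mu_1^{(n)}\boxplus\mu_2^{(n)}\to\mu$ weakly. Once $n$ is large enough that $d_i\in(-n,n)$, the greatest $(1-\epsilon/2)$-quantile of $\mu_i^{(n)}$ is still $d_i$, so the compact case yields $\mathfrak{F}_{\mu_1^{(n)}\boxplus\mu_2^{(n)}}(d_1+d_2)\ge 1-\epsilon$. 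Letting $n\to\infty$ at continuity points $t>d_1+d_2$ of $\mathfrak{F}_\mu$ gives $\mathfrak{F}_\mu(t)\ge 1-\epsilon$ for all such $t$, hence $b\le d_1+d_2$. The reflected version gives $c_1+c_2\le a$, and therefore $\overline{\Omega_\epsilon(\mu)}=[a,b]\subseteq[c_1+c_2,d_1+d_2]=\overline{\Omega_{\epsilon/2}(\mu_1)}+\overline{\Omega_{\epsilon/2}(\mu_2)}$.

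The principal obstacle is the bounded operator estimate above, where two points require care. First, one must handle the two tails separately: intersecting the projections $\mathbf{1}_{[c_j,d_j]}(x_j)$ onto both quantile cut-offs at level $\epsilon/2$ would give only $\tau\ge 1-2\epsilon$ and hence the weaker inclusion into $\overline{\Omega_{2\epsilon}(\mu)}$; the factor of two in the statement is exactly what allows each side to be controlled by a projection missing a mass of at most $\epsilon/2$. Second, the step from the quadratic-form bound on $\operatorname{ran}(f)$ to the trace bound on a spectral projection of $x_1+x_2$ uses boundedness of that operator, which is why the truncation is carried out first instead of working directly with the affiliated (possibly unbounded) operators. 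The remaining ingredients — recognizing $\overline{\Omega_\epsilon(\mu)}$ as an interval of quantiles, tracking quantiles under truncation and reflection, and the semicontinuity of $\mathfrak{F}$ at the limit — are routine.
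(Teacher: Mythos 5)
Your argument is correct in substance and reaches the conclusion by a genuinely different route from the paper's. The paper approximates each factor in the Kolmogorov metric: it sweeps the two tails of mass at most $\epsilon/2$ of $\mu_{i}$ onto the endpoints of $\Omega_{\epsilon/2}(\mu_{i})$, uses the subadditivity $d_{\infty}(\mu\boxplus\nu,\mu'\boxplus\nu')\le d_{\infty}(\mu,\mu')+d_{\infty}(\nu,\nu')$ to place $\Omega_{\epsilon}(\mu)$ inside the support of the convolution of these compactly supported truncations, and then invokes Lemma \ref{compact2}, whose proof rests on the positivity statement of Theorem $4.16$ of \cite{BV1}. You instead prove a quantile inequality directly: realizing the compactly supported factors as bounded free self-adjoint elements, intersecting the spectral projections $q_{j}=\mathbf{1}_{(-\infty,e_{j}]}(x_{j})$, and combining the Kaplansky formula $\tau(p\vee q)+\tau(p\wedge q)=\tau(p)+\tau(q)$ with the quadratic-form bound on $\operatorname{ran}(q_{1}\wedge q_{2})$; this is in effect a self-contained ``free quantile subadditivity'' which, letting $\epsilon\downarrow 0$, also recovers Lemma \ref{compact2}, and it bypasses the Kolmogorov metric entirely. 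What the paper's route buys is brevity (the operator input is quoted once, inside Lemma \ref{compact2}) and a truncation that is already within $\epsilon/2$ of the original in $d_{\infty}$, so no separate limiting argument is needed; what your route buys is a sharper, metric-free mechanism and a cleaner explanation of where the factor $2$ in $\epsilon/2$ comes from. Your reflection reduction, the preservation of quantiles under clamping at $\pm n$, and the passage to the limit along continuity points are all sound. One caveat, which you should state honestly rather than defer: when some $\Omega_{\epsilon/2}(\mu_{i})$ is empty (an atom of mass at least $1-\epsilon$), the right-hand side of the asserted inclusion is empty while the left-hand side need not be (take $\mu_{1}=\delta_{0}$ and $\mu_{2}$ nonatomic), so no bookkeeping rescues the literal statement there; the theorem, and equally the paper's own proof, tacitly assumes this degeneracy does not occur.
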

\begin{proof}

Let $a_{i}$ and $b_{i}$ denote the left and right endpoints of
$\Omega_{\epsilon / 2} (\mu_{i})$. Consider the probability measures
$\mu_{i,\epsilon / 2}$ defined as follows:
$$\mu_{i,\epsilon / 2} (\sigma) = \mu_{i} (\sigma \cap \Omega_{\epsilon / 2} (\mu_{i})) + \left(\frac{1 - \mu_{i} (\Omega_{\epsilon / 2} (\mu_{i}) )}{2}\right) \delta_{a_{i}}(\sigma)  + \left(\frac{1 - \mu_{i} (\Omega_{\epsilon / 2} (\mu_{i}) )}{2}\right) \delta_{b_{i}}(\sigma)$$

Observe that $d_{\infty} (\mu_{i} , \mu_{i, \epsilon / 2}) \leq
\epsilon / 2$ where $d_{\infty}$ denotes the Kolmogorov metric.
Further observe that $\textrm{supp}(\mu_{i, \epsilon / 2}) =
\Omega_{\epsilon / 2} (\mu_{i})$. It follows that

$$d_{\infty} (\mu , \mu_{1,\epsilon/2} \boxplus \mu_{2,\epsilon/2}) \leq d_{\infty} (\mu_{1} , \mu_{1, \epsilon / 2}) + d_{\infty} (\mu_{2} , \mu_{2, \epsilon / 2}) \leq \epsilon$$

Observe that $\mathfrak{F}_{\mu} (t) \in (\epsilon , 1- \epsilon)$
implies that $\mathfrak{F}_{\mu_{1,\epsilon/2} \boxplus
\mu_{2,\epsilon/2}} (t) \in (0,1)$. Thus, $\Omega_{\epsilon}(\mu)
\subseteq \textrm{supp}(\mu_{1,\epsilon/2} \boxplus
\mu_{2,\epsilon/2})$.  By Lemma \ref{compact2}, we have that
$\textrm{supp}(\mu_{1,\epsilon/2} \boxplus \mu_{2,\epsilon/2}))
\subset \textrm{conv} (\textrm{supp}(\mu_{1,\epsilon/2}) +
\textrm{supp}(\mu_{2,\epsilon/2})) =
\textrm{conv}(\overline{\Omega_{\epsilon/2} (\mu_{1})} +
\overline{\Omega_{\epsilon/2} (\mu_{2})})$.
\end{proof}

We close with the main result of the section.  Observe that this
theorem lacks the quantitative information found in Theorem
\ref{noncompact1}. The hope was to extend Theorem \ref{tightcompact}
in a similar manner, but such an approach proved elusive.  We have
found no negative results in this direction so we conjecture that
$\Omega_{\epsilon}(\mu_{i}) \subseteq x + \Omega_{\epsilon/2} (\mu)$
for some $x \in \mathbb{R}$.  However, the theorem below provides us
with tightness and will suffice for the applications that follow.

Let $\nu$ be a measure satisfying $0 < \nu(\mathbb{R}) \leq 1$. We
extend the definition of the Cauchy and $F$-transform by letting
$G_{\nu}(z) = \int_{\mathbb{R}} (z -t)^{-1} d\nu(t)$ and $F_{\nu(z)}
= 1 / G_{\nu(z)}$.  Observe that for $\lambda = \nu(\mathbb{R})$,
the measure $\hat{\nu} = \lambda^{-1} \nu$ is in fact a probability
measure. This provides us with the following inequality which we
shall exploit in what follows.
$$\Im F_{\nu(z)} = \lambda^{-1} \Im F_{\hat{\nu}}(z) \geq \lambda^{-1}\Im(z) $$

\begin{theorem}\label{noncompact2}
Let $\mu = \mu_{1,k} \boxplus \mu_{2,k}$ for all $k \in \mathbb{N}$.
Then there exist translations $\{ \hat{\mu}_{i,k} \}$ so that $\mu =
\hat{\mu}_{1,k} \boxplus \hat{\mu}_{2,k}$ and the family of measures
$\{ \hat{\mu}_{i,k} \}$ is tight for $i = 1,2$.
\end{theorem}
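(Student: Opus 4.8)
The plan is to use the subordination functions and their Nevanlinna representations to control the location of the "bulk" of each $\mu_{i,k}$, and then to recenter each decomposition so that this bulk sits near the origin. Fix once and for all a decomposition $\mu = \mu_{1,k} \boxplus \mu_{2,k}$, and let $\omega_{i,k}$ be the subordination functions with $F_\mu(z) = F_{\mu_{i,k}}(\omega_{i,k}(z))$, together with the Nevanlinna data $(r_{i,k}, \sigma_{i,k})$ for $\omega_{i,k}$ and $(r,\sigma)$ for $F_\mu$. The constraint $\sigma_{1,k} + \sigma_{2,k} = \sigma$ is the crucial uniform bound: it forces $\sigma_{i,k}(\mathbb{R}) \leq \sigma(\mathbb{R})$ for all $k$, so the measures $\sigma_{i,k}$ themselves form a tight (indeed, uniformly bounded-mass) family. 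This is the free-probability substitute for the classical variance bound mentioned in the introduction, and it is the source of all the tightness below.

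First I would extract, from the uniform bound $\sigma_{i,k}(\mathbb{R}) \leq \sigma(\mathbb{R}) =: M$, a concrete analytic estimate on $\omega_{i,k}$ along the imaginary axis: from the representation $\omega_{i,k}(iy) = r_{i,k} + iy + \int \frac{1+ity}{iy - t}\,d\sigma_{i,k}(t)$ one gets $|\omega_{i,k}(iy) - iy - r_{i,k}| \leq C(y) M$ where $C(y) \to 0$ as $y \to \infty$ and $C(y)$ is independent of $k$. Next I would define the recentering: set $\hat\mu_{i,k} = \mu_{i,k} \boxplus \delta_{-r_{i,k}}$ (equivalently, translate $\mu_{i,k}$ by $-r_{i,k}$), choosing the translations so that $r_{1,k}$ absorbs its partner's shift — more precisely I would translate $\mu_{1,k}$ by $-r_{1,k}$ and $\mu_{2,k}$ by $+r_{1,k} - r_k$ where $r_k = r_{1,k}+r_{2,k} = r$ is fixed, so that $\hat\mu_{1,k}\boxplus\hat\mu_{2,k} = \mu_{1,k}\boxplus\mu_{2,k} = \mu$ still holds by property $(5)$ of the transforms. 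After this recentering, the subordination function $\hat\omega_{1,k}$ for the first measure has vanishing Nevanlinna constant, so $\hat\omega_{1,k}(iy) = iy + O(C(y)M)$ uniformly in $k$; in particular $F_{\hat\mu_{1,k}}(\hat\omega_{1,k}(iy)) = F_\mu(iy)$, and since $F_\mu$ is a fixed function, evaluating and using $|F_{\hat\mu_{1,k}}(w) - w| = o(|w|)$ uniformly (Lemma~\ref{tightdomain} applied on a Stolz angle) pins down $\hat\mu_{1,k}$: the functions $\varphi_{\hat\mu_{1,k}}$ are uniformly bounded on a common Stolz angle, which by Lemma~\ref{BVconv} gives tightness of $\{\hat\mu_{1,k}\}$. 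Tightness of $\{\hat\mu_{2,k}\}$ then follows because $\hat\mu_{2,k}$ is a $\boxplus$-divisor of $\mu$ by a uniformly tight family (using that $\mu$ is fixed, $\hat\mu_{1,k}\boxplus\hat\mu_{2,k}=\mu$, and Lemma~\ref{tightdomain}/\ref{BVconv} run in reverse: a uniform $o(z)$ bound on $\varphi_\mu = \varphi_{\hat\mu_{1,k}} + \varphi_{\hat\mu_{2,k}}$ plus a uniform bound on one summand forces a uniform bound on the other).

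The main obstacle I anticipate is making the passage "uniform control of $\hat\omega_{1,k}$ along the imaginary axis $\Rightarrow$ tightness of $\hat\mu_{1,k}$" fully rigorous: one must be careful that the Stolz angle on which $F_{\hat\mu_{1,k}}^{-1}$ — equivalently $\varphi_{\hat\mu_{1,k}}$ — is defined can be chosen \emph{common} to all $k$, rather than shrinking with $k$. This is exactly where Lemma~\ref{extensionlemma} enters: it guarantees $\varphi_{\hat\mu_{i,k}}$ extends analytically to the \emph{fixed} Stolz angle $\Omega$ on which $F_\mu^{-1}$ is defined, with $\Im \varphi_{\hat\mu_{i,k}} \leq 0$ there; then I have all the $\varphi_{\hat\mu_{1,k}}$ living on one domain, with a uniform upper bound on their imaginary parts, and the recentering kills the real part of the leading asymptotics, so a normal-families/Nevanlinna-representation argument yields the uniform $o(z)$ bound needed for Lemma~\ref{BVconv}. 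A secondary technical point is verifying $r_k = r$ is genuinely constant in $k$ (it is, since $r$ is determined by the fixed measure $\mu$ via its own Nevanlinna representation and $r_{1,k}+r_{2,k}=r$), which is what makes the recentering consistent with preserving the convolution identity.
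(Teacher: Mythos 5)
Your approach is genuinely different from the paper's, which argues by contradiction: if $\{\mu_{1,k}\}$ cannot be translated to tightness, one writes $\mu_{1,k}$ as a sub-probability ``bulk'' of mass $\lambda_{k}<\gamma<1$ near $\Re F_{\mu_{1,k}}^{-1}(ib)$ plus a remainder supported far away, and uses the fact that the $F$-transform of a measure of total mass $\lambda_{k}$ expands imaginary parts by the factor $\lambda_{k}^{-1}>1$ to contradict $F_{\mu_{1,k}}(F_{\mu_{1,k}}^{-1}(ib))=ib$. Your key observation --- that $\sigma_{1,k}+\sigma_{2,k}=\sigma$ dominates the Nevanlinna measures of the subordination functions uniformly in $k$ --- is a good one, but as written the argument has genuine gaps.

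First, the central analytic estimate is false as stated. The integrand in $\omega_{i,k}(iy)-iy-r_{i,k}=\int\frac{1+ity}{iy-t}\,d\sigma_{i,k}(t)$ has modulus $\bigl(\frac{1+t^{2}y^{2}}{y^{2}+t^{2}}\bigr)^{1/2}$, which tends to $|t|$ for fixed $t$ as $y\uparrow\infty$ (and is of order $y$ when $|t|\gg y$); so the integral does not tend to $0$ --- when $\sigma_{i,k}$ has a first moment $m_{i,k}$ it tends to $m_{i,k}$ (this is exactly the computation in Theorem~\ref{tightcompact}), and in general it is only $o(y)$, uniformly in $k$ by domination $\sigma_{i,k}\leq\sigma$. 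Hence recentering by $r_{i,k}$ does not make $\hat{\omega}_{1,k}(iy)-iy$ bounded, only $o(y)$. Second, the passage to tightness is circular or incomplete: Lemma~\ref{tightdomain} takes tightness as its \emph{hypothesis}, so it cannot supply the uniform bound $|F_{\hat{\mu}_{1,k}}(w)-w|=o(|w|)$ for the family whose tightness is in question. What the subordination identity actually yields is $F_{\hat{\mu}_{1,k}}(w)-w=F_{\mu}(iy)-\hat{\omega}_{1,k}(iy)=o(y)$ only at the points $w=\hat{\omega}_{1,k}(iy)$; to convert this into the uniform estimate along the imaginary axis (equivalently, $iyG_{\hat{\mu}_{1,k}}(iy)\to 1$ uniformly in $k$, which is what tightness amounts to) you need a further step, e.g.\ a Harnack or Julia--Carath\'eodory comparison for the Pick functions $F_{\hat{\mu}_{1,k}}(z)-z$ between $iy$ and the nearby points $\hat{\omega}_{1,k}(iy)$, which you do not supply. (A smaller slip: translating $\mu_{1,k}$ by $-r_{1,k}$ and $\mu_{2,k}$ by $r_{1,k}-r$ produces $\mu\boxplus\delta_{-r}$, not $\mu$; the second factor should be shifted by $+r_{1,k}$.) With these repairs your strategy looks viable and would give an interesting alternative proof, but as it stands it does not go through.
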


Before embarking on the proof, we remark that there are two ways for
tightness to fail.  The first is to take an otherwise tight sequence
of measures and translate their support to $\pm \infty$.  The second
is if the mass of your measures becomes more spread out. Since our
theorem assumes away the former case, the idea of the proof is to
show that the latter cannot happen.  We quantify the latter case as
follows: a sequence of measures $\{ \mu_{k} \}_{k \in \mathbb{N}}$
cannot be translated to tightness if and only if there exists a
$\gamma \in [0,1)$ such that $\liminf_{k} \sup_{t \in \mathbb{R}}
(\mu_{k} (t-a, t+a)) < \gamma $ for all $a \in \mathbb{R}^{+}$.

\begin{proof}
Assume that $\{ \mu_{1,k} \}_{k \in \mathbb{N}}$ is tight which is
equivalent to sequential precompactness in the weak$^{\ast}$
topology. As we established in Lemma \ref{extensionlemma},
$F_{\mu}^{-1}$, $F_{\mu_{1,k}}^{-1}$ and $F_{\mu_{2,k}}^{-1}$ extend
to a common domain for all $k$, which we shall denote by $\Omega$ in
what follows, on which they satisfy $F_{\mu}^{-1} (z) -
F_{\mu_{1,k}}^{-1}(z) + z = F_{\mu_{2,k}}^{-1}(z)$. Recall that,
according to Lemma \ref{BVconv}, weak convergence is equivalent to
the uniform convergence of the functions $F_{\mu_{1,k}}^{-1}$ on
compact subsets of a Stolz angle $\Gamma_{\alpha, \beta}$ to a
function $F$ satisfying $F(iy)/iy \rightarrow 1$ as $y \rightarrow
\infty$. The equation above implies that $F_{\mu_{2,k}}^{-1}$ is
similarly behaved on $\Gamma_{\alpha, \beta}$ so that $\{ \mu_{2,k}
\}$ is also weakly convergent along this subsequence.  Thus, $\{
\mu_{1,k} \}$ is tight implies the same for $\{ \mu_{2,k} \}$.

With that in mind, we may assume, for the sake of contradiction,
that the family $\{ \mu_{1,k} \}$ cannot be translated to form a
tight family of measures along any subsequence.  This implies that
there exists a $\gamma \in (0,1)$ such that $\liminf_{k} \sup_{x \in
\mathbb{R}} (\mu_{1,k} (-a + x, a + x)) < \gamma < 1$ for all $a \in
\mathbb{R}^{+}$.  Passing to subsequences and possibly renumbering
our measures, we may assume that $\sup_{x \in \mathbb{R}} (
\mu_{1,k}(x - k, x + k)) < \gamma$.

Now, pick $\epsilon > 0$ such that $(1 - \epsilon) > \gamma$.  Let
$w = ib$ where $b \in \mathbb{R}^{+}$ is chosen so that $w \in
\Omega$ and $|F_{\mu}^{-1} (w) - w| \leq \epsilon|w| = \epsilon b$.
Observe that
$$F_{\mu}^{-1} (w) = F_{\mu_{1,k}}^{-1}(w) +  F_{\mu_{2,k}}^{-1}(w) -
w$$ implies that $$\Im{F_{\mu_{1,k}}^{-1}(w)} +
\Im{F_{\mu_{2,k}}^{-1}(w)} \geq b(2 - \epsilon)$$ In Lemma
\ref{extensionlemma}, we showed that $F_{\mu_{i,k}}^{-1}$ decreases
the imaginary part so that $$\Im{F_{\mu_{1,k}}^{-1}(ib)} \geq b(1 -
\epsilon)$$ Further observe that analytic continuation implies that
$F_{\mu_{1,k}}(F_{\mu_{1,k}}^{-1} (z)) = z$ for all $z \in \Omega$
so that, in particular, $ib = F_{\mu_{1,k}}(F_{\mu_{1,k}}^{-1} (ib))
$.

Now, let $z_{k} =  F_{\mu_{1,k}}^{-1}(ib)$ and denote by $t_{k}$ the
real part of this number (the real part can vary as wildly as you
would like but we will show that this is not a problem).    We
decompose $\mu_{1,k}$ so that $\mu_{1,k} = \nu_{1,k} + \rho_{1,k}$
where $\nu_{1,k} (\mathbb{R}) = \lambda_{k} < \gamma$ and
$\rho_{1,k} ([t_{k} - k , t_{k} + k]) = 0$.  A decomposition with
these properties exists because of the fact that $\sup_{x \in
\mathbb{R}} ( \mu_{1,k}(x - k, x + k)) < \gamma$. We will use the
last of the above properties to show that $|F_{\mu_{1,k}}(z_{k}) -
F_{\nu_{1,k}} (z_{k})| \rightarrow 0$. We will then use the fact
that $F_{\nu_{1,k}}$ increases imaginary part in proportion to
$\lambda_{k}^{-1}$ to derive a contradiction.

Observe that
$$F_{\mu_{1,k}} (z_{k}) = \frac{1}{G_{\nu_{1,k}}(z_{k}) + {G_{\rho_{1,k}} (z_{k})} }$$
and that

$$|{G_{\rho_{1,k}} (z_{k})}| = |\int_{\mathbb{R} \setminus (t_{k}-k, t_{k} + k)} \frac{1}{z_{k}-t} d \rho_{1,k}(t)| \rightarrow 0 $$
as $k \rightarrow \infty$.  This second fact is clear since
$\rho_{1,k}$ is a subprobability measure and, since $\Re{(z_{k})} =
t_{k}$, the above integrand converges to $0$ uniformly on the domain
of integration as $k \uparrow \infty$. Now, if $\liminf_{k}
|G_{\nu_{1,k}}(z_{k})| = 0$ then $\limsup_{k} |F_{\mu_{1,k}}
(z_{k})| = \infty$ which would contradict the fact that
$F_{\mu_{1,k}} (z_{k}) \equiv ib$. Thus, we may assume that
$|G_{\nu_{1,k}}(z_{k})| \geq c
> 0$.  This implies that $\lambda_{k} > 0$.

Consider the quantity
$$|F_{\mu_{1,k}} (z_{k}) - F_{\nu_{1,k}} (z_{k})| =  |((G_{\nu_{1,k}}
(z_{k}) - G_{\mu_{1,k}} (z_{k})))(G_{\mu_{1,k}} (z_{k})
G_{\nu_{1,k}} (z_{k}))^{-1}| $$ Observe that the numerator of the
right hand side goes to zero since $G_{\mu_{1,k}} - G_{\nu_{1,k}} =
G_{\rho_{1,k}}$ and the denominator is bounded away from zero since
$|G_{\nu_{1,k}}(z_{k})| \geq c
> 0$ and $|G_{\mu_{1,k}}(z_{k})| \equiv b^{-1} > 0$.  Thus, $|F_{\mu_{1,k}} (z_{k}) - F_{\nu_{1,k}}
(z_{k})| \rightarrow 0$ as $k \uparrow \infty$.

Recalling the remarks preceding this theorem, we consider the
probability measure $\hat{\nu}_{1,k} = \lambda_{k}^{-1}\nu_{1,k}$ so
that $F_{\nu_{1,k}} (z_{k}) = \lambda_{k}^{-1} F_{\hat{\nu}_{1,k}}
(z_{k})$. We then have

$$b = \Im{F_{\mu_{1,k}} (z_{k})} = \lim_{k \uparrow \infty}\Im{F_{\mu_{1,k}} (z_{k})} =  \lim_{k \uparrow \infty} \lambda_{k}^{-1}
\Im{F_{\hat{\nu}_{1,k}} (z_{k})} $$ $$\geq \lim_{k \uparrow \infty}
\lambda_{k}^{-1} \Im{(z_{k})}   \geq \gamma^{-1}(b(1 - \epsilon)) >
b$$ This contradiction completes our proof.
\end{proof}

We end with a few remarks and corollaries.  We single out the
following fact from last theorem for easy reference.

\begin{corollary}\label{noncompact2easycorr}
Let $\mu = \mu_{1,k} \boxplus \mu_{2,k}$ be a family of
decompositions.  Assume that $\{ \mu_{1,k} \}_{k \in \mathbb{N}}$ is
tight.  Then $\{ \mu_{2,k} \}_{k \in \mathbb{N}}$ is tight.
\end{corollary}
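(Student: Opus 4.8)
The plan is to isolate and elaborate the argument already present in the first paragraph of the proof of Theorem \ref{noncompact2}, which in fact proves exactly this statement. Since tightness of a family of probability measures on $\mathbb{R}$ is equivalent to sequential precompactness in the weak$^{\ast}$ topology, it suffices to fix an arbitrary subsequence of $\{\mu_{2,k}\}$ and extract a sub-subsequence along which $\mu_{2,k}$ converges weak$^{\ast}$ to a probability measure.

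First I would use the tightness of $\{\mu_{1,k}\}$ to pass to a sub-subsequence along which $\mu_{1,k} \to \mu_{1}$ weak$^{\ast}$ for some probability measure $\mu_{1}$. By Lemma \ref{BVconv} there is then a Stolz angle $\Gamma_{\alpha,\beta}$ on which the transforms $\varphi_{\mu_{1,k}}$ are all defined, converge uniformly on compact subsets to $\varphi_{\mu_{1}}$, and satisfy $\varphi_{\mu_{1,k}}(z) = o(z)$ uniformly in $k$ as $|z|\to\infty$ in $\Gamma_{\alpha,\beta}$ (alternatively Lemma \ref{tightdomain} supplies the common domain and the uniform $o(z)$ bound). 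Shrinking $\Gamma_{\alpha,\beta}$ if necessary, I may assume it is contained in the Stolz angle $\Omega$ of Lemma \ref{extensionlemma}, on which $F_{\mu}^{-1}$, and hence the Voiculescu transform $\varphi_{\mu}(z) = F_{\mu}^{-1}(z) - z$, is defined and satisfies $\varphi_{\mu}(z) = o(z)$ (by property (1) of the $F$-transform, or Lemma \ref{BVconv} applied to the constant sequence $\mu$).

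The key step is the subordination identity recorded in the proof of Theorem \ref{noncompact2}: on $\Omega$ one has $F_{\mu}^{-1}(z) - F_{\mu_{1,k}}^{-1}(z) + z = F_{\mu_{2,k}}^{-1}(z)$, equivalently
$$\varphi_{\mu_{2,k}}(z) = \varphi_{\mu}(z) - \varphi_{\mu_{1,k}}(z), \qquad z \in \Gamma_{\alpha,\beta}.$$
From this the family $\varphi_{\mu_{2,k}}$ converges uniformly on compact subsets of $\Gamma_{\alpha,\beta}$ to $\varphi_{\mu} - \varphi_{\mu_{1}}$, and $\varphi_{\mu_{2,k}}(z) = o(z)$ uniformly in $k$ since both terms on the right enjoy this bound. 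Hence condition (2) of Lemma \ref{BVconv} holds for $\{\mu_{2,k}\}$ along this sub-subsequence, and the lemma yields weak$^{\ast}$ convergence of $\mu_{2,k}$ to a probability measure. As the initial subsequence was arbitrary, $\{\mu_{2,k}\}$ is tight.

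I do not expect a genuine obstacle here; the content is bookkeeping around the choices of Stolz angles and the verification of the uniform $o(z)$ estimate needed to invoke Lemma \ref{BVconv}. The one point requiring care is that Lemma \ref{BVconv} delivers convergence to an honest probability measure rather than a defective limit with lost mass — this is precisely what excludes the ``escape to infinity'' failure of tightness, and it is why the uniform $o(z)$ hypothesis must be checked and not merely the locally uniform convergence of the $\varphi_{\mu_{2,k}}$.
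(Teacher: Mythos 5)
Your proposal is correct and follows essentially the same route as the paper, which proves this corollary as the opening paragraph of the proof of Theorem \ref{noncompact2}: pass to a weak$^{\ast}$-convergent subsequence of $\{\mu_{1,k}\}$, use the identity $F_{\mu}^{-1}(z)-F_{\mu_{1,k}}^{-1}(z)+z=F_{\mu_{2,k}}^{-1}(z)$ on the common domain supplied by Lemma \ref{extensionlemma}, and invoke Lemma \ref{BVconv} in both directions. Your version is in fact slightly more careful than the paper's in explicitly verifying the uniform $o(z)$ estimate for $\varphi_{\mu_{2,k}}$ before applying Lemma \ref{BVconv}.
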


As we stated before the proof of Theorem \ref{noncompact2}, a family
of measures can fail to be tight either by being translated to $\pm
\infty$ or by becoming more spread out.  For $t \in (0,1)$, we shall
say that a measure $\mu$ is $\textit{t-centered}$ if
$\mathfrak{F}_{\mu}(s) < t$ for $s < 0$ and $\mathfrak{F}_{\mu}(s)
\geq t$ for $s \geq 0$.  Right continuity of the distribution
function implies that a measure $\mu$ has a unique $t$-centered
translation. Observe that when $t = 1/2$, $t$-centered is simply the
more familiar median $0$.  Now, if we assume that we have a family
of decompositions $\mu = \mu_{1,k} \boxplus \mu_{2,k}$ where $\{
\mu_{1,k} \}_{k \in \mathbb{N}}$ are assumed to be $t$-centered,
then the supports of these measures are not being sent to $\infty$.
By \ref{noncompact2}, we have the following corollary.

\begin{corollary}
Let $\mu = \mu_{1,k}\label{tcenter} \boxplus \mu_{2,k}$ where $\{
\mu_{1,k} \} $ are $t$-centered where $t$ is allowed to range over a
compact subset of $(0,1)$. Then $\{ \mu_{i,k} \}_{k \in \mathbb{N}}$
forms a tight family.
\end{corollary}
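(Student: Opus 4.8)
The plan is to run the tightness theorem and then show that the translations it produces only move $\mu_{1,k}$ by a bounded amount, the bound being supplied by the $t$-centering hypothesis together with the compactness of the range of $t$. First I would apply Theorem \ref{noncompact2} to the family $\mu = \mu_{1,k}\boxplus\mu_{2,k}$ to get real numbers $c_{k}$ such that, writing $\hat{\mu}_{1,k} = \mu_{1,k}\boxplus\delta_{-c_{k}}$ and $\hat{\mu}_{2,k} = \mu_{2,k}\boxplus\delta_{c_{k}}$, we have $\mu = \hat{\mu}_{1,k}\boxplus\hat{\mu}_{2,k}$ and both families $\{\hat{\mu}_{1,k}\}$ and $\{\hat{\mu}_{2,k}\}$ are tight. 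It then suffices to prove that $\sup_{k}|c_{k}| < \infty$: a tight family translated by a bounded set of reals stays tight, so this would give tightness of $\{\mu_{1,k}\}$, and then Corollary \ref{noncompact2easycorr} immediately gives tightness of $\{\mu_{2,k}\}$.

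The core of the argument is the bound on $|c_{k}|$. Let $K\subset(0,1)$ be the compact set in which the centering levels $t_{k}$ lie, and set $a = \inf K > 0$ and $b = \sup K < 1$. Fix $\delta$ with $0 < \delta < \min(a, 1-b)$, and use tightness of $\{\hat{\mu}_{1,k}\}$ to choose $M > 0$ with $\hat{\mu}_{1,k}\bigl((-M,M)\bigr) > 1-\delta$ for all $k$; equivalently $\mu_{1,k}\bigl((c_{k}-M,\,c_{k}+M)\bigr) > 1-\delta$. If $c_{k} > M$ then $(c_{k}-M,\,c_{k}+M)\subseteq(0,\infty)$, so $\mathfrak{F}_{\mu_{1,k}}(0) = \mu_{1,k}\bigl((-\infty,0]\bigr) < \delta < a \le t_{k}$, which contradicts the fact that $\mu_{1,k}$ is $t_{k}$-centered; symmetrically, if $c_{k} < -M$ then $(c_{k}-M,\,c_{k}+M)\subseteq(-\infty,0)$, so $\mu_{1,k}\bigl((-\infty,0)\bigr) > 1-\delta > b \ge t_{k}$, again contradicting $t_{k}$-centeredness. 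Hence $|c_{k}|\le M$ for every $k$. Consequently, for any $\eta>0$, choosing $N$ with $\hat{\mu}_{1,k}\bigl((-N,N)\bigr) > 1-\eta$ for all $k$ gives $\mu_{1,k}\bigl((-N-M,\,N+M)\bigr) > 1-\eta$ for all $k$, so $\{\mu_{1,k}\}$ is tight; Corollary \ref{noncompact2easycorr} then finishes the proof.

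I expect the only real subtlety to be Step 2: one must keep careful track of the one-sided values of the distribution function and the distinction between open and half-open intervals (the inequality $\mathfrak{F}_{\mu}(0^{-})\le t\le\mathfrak{F}_{\mu}(0)$ for a $t$-centered $\mu$), and one must use the compactness of the range of $t$ precisely, namely to keep $a$ bounded away from $0$ and $b$ bounded away from $1$ \emph{uniformly} in $k$ so that a single $\delta$, and hence a single $M$, works for the whole family. Everything else is a direct appeal to Theorem \ref{noncompact2} and Corollary \ref{noncompact2easycorr}.
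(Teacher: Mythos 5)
Your proof is correct and follows exactly the route the paper intends: the paper gives no formal proof of this corollary, only the remark that $t$-centering prevents the supports from escaping to $\pm\infty$ followed by an appeal to Theorem \ref{noncompact2}, and your argument (bounding the translation constants $|c_{k}|$ via the uniform separation of $K$ from $\{0,1\}$, then invoking Corollary \ref{noncompact2easycorr}) is precisely the careful version of that remark.
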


The following variation will prove useful in what follows.

\begin{corollary}\label{noncompact2corrolary}
Let $\{ \mu_{n} \}_{n \in \mathbb{N}}$ be a tight sequence of
measures.  Assume that to each member of this family we associate a
family of decompositions $\mu_{n} = \nu_{n,k} \boxplus \rho_{n,k}$
for $k \in \mathbb{N}$.  Then we may translate our measures so as to
form  tight families $\{ \hat{\nu}_{n,k} \}_{n,k \in \mathbb{N}}$
and $\{ \hat{\rho}_{n,k} \}_{n,k \in \mathbb{N}}$ with the property
that $\mu_{n} = \hat{\nu}_{n,k} \boxplus \hat{\rho}_{n,k}$ for all
$n,k \in \mathbb{N}$.
\end{corollary}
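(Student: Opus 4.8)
The plan is to repeat the proof of Theorem~\ref{noncompact2}, but with every estimate made uniform in the index $n$; the tightness of $\{\mu_{n}\}$ is exactly what supplies this uniformity, through Lemma~\ref{tightdomain}. First I would use Lemma~\ref{tightdomain} to fix a single Stolz angle $\Gamma = \Gamma_{\alpha,\beta}$ on which all of the inverses $F_{\mu_{n}}^{-1}$ are defined and satisfy $|F_{\mu_{n}}^{-1}(z) - z| = o(z)$ uniformly in $n$ as $|z|\to\infty$ in $\Gamma$; the uniform bound for the inverses follows from the one for the $F_{\mu_{n}}$, or alternatively from Lemma~\ref{BVconv} and tightness via a routine subsequence argument. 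Then, by Lemma~\ref{extensionlemma}, for every pair $(n,k)$ the transforms $\varphi_{\nu_{n,k}}$ and $\varphi_{\rho_{n,k}}$ extend analytically to a common domain $\Omega\subseteq\Gamma$, have nonpositive imaginary part there, and satisfy $F_{\mu_{n}}^{-1}(z) = F_{\nu_{n,k}}^{-1}(z) + F_{\rho_{n,k}}^{-1}(z) - z$ on $\Omega$.

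Next I would normalize by location: for each $(n,k)$ let $\hat{\nu}_{n,k}$ be the unique $1/2$-centered translate of $\nu_{n,k}$ and let $\hat{\rho}_{n,k}$ be the corresponding translate of $\rho_{n,k}$, so that $\mu_{n} = \hat{\nu}_{n,k}\boxplus\hat{\rho}_{n,k}$. Since every $\hat{\nu}_{n,k}$ is $1/2$-centered, the family $\{\hat{\nu}_{n,k}\}_{n,k}$ is tight if and only if it is tight up to translation: a tight family of probability measures has uniformly bounded $1/2$-centers, so any translation carrying $\{\hat{\nu}_{n,k}\}_{n,k}$ to a tight family is itself bounded, and translating a tight family by a bounded amount preserves tightness. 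Consequently, if $\{\hat{\nu}_{n,k}\}_{n,k}$ were not tight, then---re-indexing it as a single sequence and using the characterization recorded just before Theorem~\ref{noncompact2}---there would be a $\gamma\in(0,1)$ and a subsequence $(n_{j},k_{j})_{j}$ with $\sup_{x\in\mathbb{R}}\hat{\nu}_{n_{j},k_{j}}(x-j,x+j) < \gamma$ for all $j$.

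Now I would run the contradiction argument of Theorem~\ref{noncompact2} essentially word for word, using the uniform-in-$n$ bound from the first paragraph so that the basepoint can be chosen once and for all. Pick $\epsilon>0$ with $1-\epsilon>\gamma$ and then $w = ib\in\Omega$ with $|F_{\mu_{n}}^{-1}(w) - w|\le\epsilon b$ for every $n$ simultaneously. From $F_{\mu_{n_{j}}}^{-1}(w) = F_{\hat{\nu}_{n_{j},k_{j}}}^{-1}(w) + F_{\hat{\rho}_{n_{j},k_{j}}}^{-1}(w) - w$ and the fact that $F_{\hat{\nu}_{n_{j},k_{j}}}^{-1}$ and $F_{\hat{\rho}_{n_{j},k_{j}}}^{-1}$ decrease the imaginary part on $\Omega$, one obtains $\Im F_{\hat{\nu}_{n_{j},k_{j}}}^{-1}(ib)\ge b(1-\epsilon)$. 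Setting $z_{j} = F_{\hat{\nu}_{n_{j},k_{j}}}^{-1}(ib)$ and $t_{j} = \Re z_{j}$, the bound $\sup_{x}\hat{\nu}_{n_{j},k_{j}}(x-j,x+j)<\gamma$ lets us write $\hat{\nu}_{n_{j},k_{j}} = \nu'_{j} + \rho'_{j}$ with $\nu'_{j}(\mathbb{R}) = \lambda_{j} < \gamma$ and $\rho'_{j}([t_{j}-j,t_{j}+j]) = 0$; then, exactly as in Theorem~\ref{noncompact2}, $|F_{\hat{\nu}_{n_{j},k_{j}}}(z_{j}) - F_{\nu'_{j}}(z_{j})|\to 0$, and passing to the probability measure $\tau_{j} = \lambda_{j}^{-1}\nu'_{j}$ yields
$$b = \Im F_{\hat{\nu}_{n_{j},k_{j}}}(z_{j}) = \lim_{j}\lambda_{j}^{-1}\Im F_{\tau_{j}}(z_{j}) \ge \lim_{j}\lambda_{j}^{-1}\Im z_{j} \ge \gamma^{-1}b(1-\epsilon) > b,$$
a contradiction. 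Hence $\{\hat{\nu}_{n,k}\}_{n,k}$ is tight.

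It remains to see that $\{\hat{\rho}_{n,k}\}_{n,k}$ is tight, which follows from the subordination-transform argument behind Corollary~\ref{noncompact2easycorr}, again made uniform by Lemma~\ref{tightdomain}: given any subsequence of the pairs $(n,k)$, extract a sub-subsequence along which both $\mu_{n}$ and $\hat{\nu}_{n,k}$ converge weakly; on $\Gamma$ we have $\varphi_{\hat{\rho}_{n,k}} = \varphi_{\mu_{n}} - \varphi_{\hat{\nu}_{n,k}}$, whose right-hand side converges uniformly on compact subsets of $\Gamma$ and satisfies $\varphi_{\hat{\rho}_{n,k}}(z) = o(z)$ uniformly, so Lemma~\ref{BVconv} gives weak convergence of the $\hat{\rho}_{n,k}$ along the sub-subsequence. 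As every subsequence has a weakly convergent sub-subsequence, $\{\hat{\rho}_{n,k}\}_{n,k}$ is tight. The one point requiring care is the uniformity claimed in the first paragraph---that $\Omega$ and the $o(z)$ estimates can be taken independent of $n$ and $k$---and this is precisely where the hypothesis that $\{\mu_{n}\}$ is tight enters; granted this, the quantitative core of Theorem~\ref{noncompact2} transfers with no modification.
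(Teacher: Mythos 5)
Your proposal is correct and follows essentially the same route as the paper: center the $\nu_{n,k}$ at median $0$, use tightness of $\{\mu_{n}\}$ to make the estimates of Theorem \ref{noncompact2} uniform in $n$, and rerun its contradiction argument. The only (cosmetic) difference is that the paper passes to a weak cluster point $\mu$ of $\{\mu_{n(i)}\}$ and uses the convergence $F_{\mu_{n(i)}}^{-1}\to F_{\mu}^{-1}$ to justify the choice of basepoint, whereas you invoke the uniform $o(z)$ estimate from Lemma \ref{tightdomain} directly; both supply the same uniformity.
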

\begin{proof}

We assume that each $\nu_{n,k}$ has median $0$.

Assume that $\{ \nu_{n(i), k(i)} \}_{i \in \mathbb{N}}$ has no
subconvergent sequence.  Let $\mu$ be a cluster point of $\{
\mu_{n(i)} \}_{i \in \mathbb{N}}$.  By Lemmas \ref{tightdomain} and
\ref{extensionlemma}, we have that there exists a truncated cone
$\Gamma_{\alpha , \beta}$ so that for $i$ large enough,
$F_{\mu}^{-1} , F_{\mu_{n(i)}}^{-1} , F_{\nu_{n(i) , k(i)}}^{-1}$
and $F_{\rho_{n(i) , k(i)}}^{-1}$ are all defined and satisfy
$F_{\nu_{n(i) , k(i)}}^{-1}(z) + F_{\rho_{n(i) , k(i)}}^{-1}(z) - z
= F_{\mu_{n(i)}}^{-1} (z) \rightarrow F_{\mu}^{-1}(z)$ uniformly
over compact subsets of $\Gamma_{\alpha , \beta}$.

Now, since we have centered our measures $\nu_{n(i),k(i)}$ by
assuming median $0$, the lack of a convergent subsequence amounts to
assuming that $$\liminf_{i}( \sup_{t\in \mathbb{R}} \nu_{n(i) ,
k(i)} ([t -a , t + a]) ) \rightarrow \gamma < 1$$ for all $a \in
\mathbb{R^{+}}$.  At this point, one need only observe that every
step of the proof of Theorem \ref{noncompact2} holds under the
weaker assumption that $F_{\nu_{n(i) , k(i)}}^{-1}(z) +
F_{\rho_{n(i) , k(i)}}^{-1}(z) - z \rightarrow F_{\mu}^{-1}(z)$ as
opposed to assuming outright equality.  This completes our proof.
\end{proof}

\section{A Khintichine Decomposition for Additive Free
Convolution}

\begin{lemma}\label{diff_bound}
Let $\{ \mu_{i} \}_{i \in I}$ be a tight family of probability
measures. Then, for every $C > 0$, there exists a Stolz angle
$\Gamma_{\alpha , \beta}$ such that $|\varphi'_{\mu_{i}} (z)| \leq
C|z|$ for all $z\in \Gamma_{\alpha , \beta}$ and $n \in \mathbb{N}$
\end{lemma}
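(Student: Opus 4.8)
The plan is to upgrade a uniform \emph{linear} bound on the functions $\varphi_{\mu_i}$, valid on one common Stolz angle, into a bound on the derivatives via a Cauchy estimate, and then to shrink the angle so as to convert the resulting \emph{constant} bound into the claimed estimate $|\varphi_{\mu_i}'(z)|\le C|z|$.

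First I would establish that there is a Stolz angle $\Gamma_{\alpha_1,\beta_1}$ on which every $\varphi_{\mu_i}$ is defined and analytic and satisfies $|\varphi_{\mu_i}(z)|\le|z|$, uniformly in $i\in I$. Since any sequence drawn from the tight family $\{\mu_i\}_{i\in I}$ is again tight, Lemma~\ref{tightdomain} furnishes a common Stolz angle $\Gamma_{\alpha_0,\beta_0}$ on which all the $F_{\mu_i}^{-1}$ exist, and moreover the estimate in that lemma, combined with property~(1) of the $F$-transform and a routine contradiction argument (if no angle worked uniformly, one could select $\mu_{i_n}$ and points $z_n\to\infty$ violating the bound, contradicting Lemma~\ref{tightdomain} applied to $\{\mu_{i_n}\}$), gives $|F_{\mu_i}(z)-z|=o(|z|)$ uniformly in $i$ on $\Gamma_{\alpha_0,\beta_0}$. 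Passing to the inverse in the usual manner — if $w=F_{\mu_i}(z)$ then $|\varphi_{\mu_i}(w)|=|F_{\mu_i}^{-1}(w)-w|=|z-w|=|F_{\mu_i}(z)-z|=o(|z|)=o(|w|)$ — yields $\varphi_{\mu_i}(z)=o(|z|)$ uniformly in $i$, hence $|\varphi_{\mu_i}(z)|\le|z|$ on a suitable sub-angle $\Gamma_{\alpha_1,\beta_1}\subseteq\Gamma_{\alpha_0,\beta_0}$.

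Next I would invoke the standard interior-cone property of Stolz angles: there exist a Stolz angle $\Gamma_{\alpha_2,\beta_2}\subseteq\Gamma_{\alpha_1,\beta_1}$ and a constant $\delta\in(0,1)$ such that the closed disc $\overline{D(z,\delta|z|)}$ is contained in $\Gamma_{\alpha_1,\beta_1}$ for every $z\in\Gamma_{\alpha_2,\beta_2}$ (for $z$ in a Stolz angle, $|z|$ and $\Im z$ are comparable, so a disc about $z$ of radius a small multiple of $|z|$ remains inside a slightly wider angle). For $z\in\Gamma_{\alpha_2,\beta_2}$, Cauchy's integral formula for the derivative over the circle $|\zeta-z|=\delta|z|$, together with the bound $|\varphi_{\mu_i}(\zeta)|\le|\zeta|\le(1+\delta)|z|$ on that circle, gives
$$|\varphi_{\mu_i}'(z)|\;\le\;\frac{1+\delta}{\delta}\;=:\;M,$$
a bound uniform in both $i\in I$ and $z\in\Gamma_{\alpha_2,\beta_2}$.

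Finally, given $C>0$, I would pass to the Stolz angle $\Gamma_{\alpha,\beta}$ with $\beta=\beta_2$ and $\alpha=\max(\alpha_2,M/C)$; this is contained in $\Gamma_{\alpha_2,\beta_2}$, and on it $|z|\ge\Im z>M/C$, so that $|\varphi_{\mu_i}'(z)|\le M\le C|z|$ for all $i\in I$ and all $z\in\Gamma_{\alpha,\beta}$, which is the assertion. The only genuinely non-formal step is the first one — producing the uniform $o(|z|)$ estimate for the entire tight family on a single common angle — which relies on the fact that tightness passes to subsequences together with Lemma~\ref{tightdomain}; everything after that is a Cauchy estimate and bookkeeping on the angle parameters.
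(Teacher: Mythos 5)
Your proof is correct and follows essentially the same route as the paper: a uniform linear bound $|\varphi_{\mu_i}(z)|\le C|z|$ on a common Stolz angle coming from tightness, followed by a Cauchy estimate for the derivative and an adjustment of the angle parameters. The only difference is cosmetic — you integrate over circles of radius $\delta|z|$ to get a constant bound and then shrink the angle, whereas the paper integrates over a fixed-radius circle; your version is in fact written more carefully than the paper's (whose displayed Cauchy formula contains typos) and yields the slightly stronger conclusion that $\varphi'_{\mu_i}$ is uniformly bounded.
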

\begin{proof}
It was shown in the proof of Theorem $5.2$ in \cite{BV1} that, given
a tight family of measures $\{ \mu_{i} \}_{i \in I}$, there exists
an $\alpha > 0$ such that $F_{\mu_{i}} = z(1 + o(1))$ uniformly as
$|z| \uparrow \infty$ for $z\in \Gamma_{\alpha, 0}$.  Thus, for
fixed $C > 0$, we may find a  $\beta$ large enough so that
$|\varphi_{\mu_{i}} (z)| = |F_{\mu_{i}}^{-1} (z) - z| \leq C|z|$ for
$z \in \Gamma_{\alpha, \beta}$ , $n \in \mathbb{N}$. By Cauchy's
formula,
$$ |\varphi_{\mu_{i}}'(z)| = (2\pi)^{-1} |\int_{|\zeta| = 1}{\frac{\varphi_{\mu_{i}}(z)}{(\zeta - z)^{2} } d \zeta} | \leq C|z|$$
\end{proof}

We will now define the functional that will be the main tool in the
proof of our main theorems.  Let $\mu$ be a probability measure. Let
$M_{0}$ be the set of all median $0$ probability measures $\nu$
satisfying $\mu = \nu \boxplus \rho$ for some probability measure
$\rho$.  It is a consequence of Corollary \ref{tcenter} that this is
a tight family of measures. Let $\Gamma_{\alpha , \beta}$ be a Stolz
angle on which $F_{\mu}^{-1}$ is defined and for which Lemma
\ref{diff_bound} is satisfied with regard to $M_{0}$. Consider the
set $\Gamma '  = \{ z \in \mathbb{C}^{+} : \alpha + 1 > \Im(z) >
\alpha , \ \Im(z) > \beta \Re(z) \} \subset \Gamma_{\alpha , \beta}$
and let $M_{\Gamma'}$ be the set of probability measures $\nu$ such
that $\varphi_{\nu}$ has analytic extension to $\Gamma'$ such that
$\Im \varphi_{\nu} (z) \leq 0$ for all $z \in \Gamma'$. For $\nu \in
M_{\Gamma'}$, let $\Lambda (\nu):= -\int_{\Gamma'} \Im \varphi_{\nu
(z)} dA(z)$ where $A$ denotes the area measure.

Observe that, by Lemma \ref{id}, for any decomposition $\mu = \rho
\boxplus \nu$ we have that $\rho , \nu \in M_{\Gamma'}$.
Furthermore, we claim the following properties for our functional
$\Lambda$.
\begin{enumerate}

\item
$\Lambda$ is weakly continuous.

\item
$\Lambda(\nu \boxplus \rho) = \Lambda(\nu) + \Lambda(\rho)$ for all
$\nu , \rho \in M_{\Gamma'}$.

\item
$0 \leq \Lambda(\nu) < \infty$ for all $\nu \in M_{\Gamma'}$.
$\Lambda (\nu) = 0$ if and only if $\nu$ is a Dirac mass.

\item
$\Lambda(\nu \boxplus \delta_{t}) = \Lambda(\nu)$ for all $t \in
\mathbb{R}$ and $\nu \in M_{\Gamma'}$.
\end{enumerate}
The only fact that requires argument is that $\Lambda(\nu) = 0$ if
and only if $\nu$ is a Dirac mass.  One direction is clear since the
Voiculescu transform of a Dirac mass is simply a real constant.
Furthermore, since $-\Im(\varphi_{\nu} (z)) \geq 0$ for all $z \in
\Gamma'$, $\Lambda(\nu) = 0$ implies that $-\Im(\varphi_{\nu} (z))
\equiv 0$ for $z \in \Gamma'$.  Analytic continuation implies that
$\varphi_{\nu}$ is a real constant which implies that $\nu$ is a
Dirac mass.

\begin{theorem}\label{id}
Let $\mu$ be a probability measure with the property that for every
non-trivial decomposition $\mu = \mu_{1} \boxplus \mu_{2}$, neither
$\mu_{1}$ nor $\mu_{2}$ is indecomposable.  Then $\mu$ is infinitely
divisible.
\end{theorem}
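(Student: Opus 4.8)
The plan is to prove the contrapositive in effect: if $\mu$ is \emph{not} infinitely divisible, then it admits a nontrivial decomposition $\mu = \mu_1 \boxplus \mu_2$ with at least one factor indecomposable. The natural strategy is a minimization argument using the functional $\Lambda$ constructed above. First I would invoke Theorem \ref{pataid} in the contrapositive form: if $\mu$ is not infinitely divisible, then there is a constant $\gamma > 0$ such that no decomposition $\mu = \nu \boxplus \rho$ has $\Lambda(\nu) < \gamma$ unless $\nu$ is a Dirac mass; more precisely, the infimum of $\Lambda(\nu)$ over all \emph{non-Dirac} median-$0$ divisors $\nu$ of $\mu$ is strictly positive. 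Indeed, if that infimum were $0$ we could extract (using tightness of $M_0$ from Corollary \ref{tcenter}, weak compactness, and weak continuity of $\Lambda$ from property (1)) a divisor $\nu$ with $\Lambda(\nu) = 0$, hence a Dirac mass by property (3) — but then one can split $\mu$ into arbitrarily many small pieces along a subsequence and Theorem \ref{pataid} would force $\mu$ to be infinitely divisible, a contradiction. I should set up this ``uniform lower bound on small factors'' carefully, since it is the quantitative heart of the argument.

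Once that positive infimum $m := \inf\{\Lambda(\nu) : \mu = \nu\boxplus\rho,\ \nu \text{ median }0,\ \nu \text{ not Dirac}\} > 0$ is in hand, I would pick a minimizing sequence $\mu = \nu_n \boxplus \rho_n$ with $\nu_n$ median $0$, non-Dirac, and $\Lambda(\nu_n) \to m$. By Corollary \ref{tcenter} the family $\{\nu_n\}$ is tight, and by Corollary \ref{noncompact2easycorr} so is $\{\rho_n\}$ after suitable translation (keeping the identity $\mu = \nu_n \boxplus \hat\rho_n$); passing to a subsequence, $\nu_n \to \nu_*$ and $\hat\rho_n \to \rho_*$ weakly, with $\mu = \nu_* \boxplus \rho_*$ by Lemma \ref{id} (the continuity of $\boxplus$) and $\Lambda(\nu_*) = m > 0$ by weak continuity, so $\nu_*$ is not a Dirac mass. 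The claim is then that $\nu_*$ is indecomposable. If not, write $\nu_* = \alpha \boxplus \beta$ nontrivially; then $\mu = \alpha \boxplus \beta \boxplus \rho_*$, and after recentering $\alpha$ to have median $0$ we get a non-Dirac median-$0$ divisor of $\mu$ with $\Lambda(\alpha) = \Lambda(\nu_*) - \Lambda(\beta) < \Lambda(\nu_*) = m$ by additivity (property (2)) and positivity of $\Lambda(\beta)$ (property (3), using that $\beta$ is not Dirac). This contradicts minimality of $m$. Hence $\nu_*$ is indecomposable — but the hypothesis of the theorem says that in \emph{every} nontrivial decomposition of $\mu$ neither factor is indecomposable, so $\mu = \nu_* \boxplus \rho_*$ must be trivial, i.e., $\rho_*$ is a Dirac mass and $\nu_*$ is a translate of $\mu$. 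Thus $\mu$ itself is indecomposable, and then (taking its $n$-fold decompositions, which don't exist nontrivially) we must separately rule this out or observe that an indecomposable $\mu$ with the stated hypothesis is vacuously handled; I would instead run the argument so that the conclusion directly yields a contradiction with non-divisibility.

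The main obstacle I anticipate is the first step: extracting from ``$\inf \Lambda = 0$ on non-Dirac divisors'' a genuine contradiction with non-infinite-divisibility via Theorem \ref{pataid}. The subtlety is that a single divisor $\nu$ with small $\Lambda(\nu)$ does not immediately produce the infinitesimal triangular array required by \ref{pataid}; one needs to iterate the decomposition — peeling off many factors each of small $\Lambda$ — and control that the resulting array $\{\mu_{i,j}\}$ satisfies the uniform infinitesimality condition $\lim_i \max_j \mu_{i,j}(\{|t|>\epsilon\}) = 0$. Here the quantitative support estimates of Section 3 (Theorems \ref{noncompact1}, \ref{noncompact2} and their corollaries) are what make smallness of $\Lambda$ translate into smallness of mass away from a point, so the bookkeeping linking $\Lambda$-smallness to weak-closeness-to-a-Dirac-mass is the delicate part. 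Everything else — tightness, weak compactness, continuity and additivity of $\Lambda$, and the final minimality contradiction — is routine given the machinery already assembled.
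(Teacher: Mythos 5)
There is a genuine gap at the foundation of your proposal. The implication you want to extract from Theorem \ref{pataid} --- that if $\mu$ is not infinitely divisible then the infimum of $\Lambda(\nu)$ over all non-Dirac median-$0$ divisors $\nu$ of $\mu$ is strictly positive --- is false as a standalone statement. Take any $\rho$ that is not $\boxplus$-infinitely divisible and let $\sigma_t$ denote the semicircular law of variance $t$, so that $\varphi_{\sigma_t}(z)=t/z$ and $\Lambda(\sigma_t)=t\,\Lambda(\sigma_1)$. Since the $\boxplus$-infinitely divisible laws form a weakly closed set and $\sigma_\epsilon\boxplus\rho\to\rho$ as $\epsilon\to 0$, there is some $\epsilon_0>0$ for which $\mu:=\sigma_{\epsilon_0}\boxplus\rho$ is not infinitely divisible; yet $\mu$ admits the non-Dirac divisors $\sigma_t$, $0<t\le\epsilon_0$, with $\Lambda(\sigma_t)\to 0$. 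The reason your justification of the implication fails is precisely the issue you flag at the end but do not resolve: a single divisor of small $\Lambda$ produces no infinitesimal triangular array, and Theorem \ref{pataid} needs $\mu$ split into $n$ pieces \emph{all} of which are uniformly small. The ability to keep subdividing the remainder at every stage is exactly where the theorem's hypothesis (every factor of every nontrivial decomposition is again decomposable) must enter; in your contrapositive framing that hypothesis is not available when you assert the lower bound, so the step cannot be repaired without abandoning the architecture.

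The second half of your argument --- tightness of the centered divisor family via Corollary \ref{tcenter} and Theorem \ref{noncompact2}, a minimizing sequence for $\Lambda$, weak continuity and additivity forcing the minimizer to be indecomposable --- is sound, and it is in fact the opening step of the actual proof, run under the theorem's hypothesis to conclude that the infimum of $\Lambda$ over nontrivial divisors is $0$. What your proposal omits is everything after that: an intermediate-value argument showing $\Lambda$ attains every value in $(0,\Lambda(\mu))$ on divisors of $\mu$; an inductive construction of decompositions $\mu=\mu_{n,1}\boxplus\cdots\boxplus\mu_{n,n}\boxplus\delta_{x_n}$ with $\Lambda(\mu_{n,i})=\Lambda(\mu)/n$ and each $\mu_{n,i}$ of median $0$ (each stage of the induction invoking the hypothesis again); and the verification that this array converges uniformly to $\delta_0$, which goes through the derivative bound of Lemma \ref{diff_bound} and a normal-family argument on a Stolz angle together with Lemma \ref{BVconv}, not through the support estimates of Section 3. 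Only then does Theorem \ref{pataid} apply. None of this is routine bookkeeping, and it constitutes the bulk of the proof.
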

\begin{proof}

We first note that for every $\epsilon > 0$, there exists a
decomposition $\mu = \mu_{1} \boxplus \mu_{2}$ such that $0< \Lambda
(\mu_{1}) < \epsilon$.  Assume otherwise and let $\alpha > 0$ be the
infimum of $\Lambda$ over all non-trivial decompositions of $\mu$.
By Theorem \ref{noncompact2}, there exists a sequence of
decompositions $\mu = \mu_{1,k} \boxplus \mu_{2,k}$ so that the
families $\{ \mu_{i,k} \}_{k = 1}^{\infty}$ are tight and so that
$\Lambda (\mu_{1,k}) \rightarrow \alpha$.  Taking weak cluster
points $\mu_{1}$ and $\mu_{2}$, by weak continuity of both $\Lambda$
and $\boxplus$ we have that $\mu = \mu_{1} \boxplus \mu_{2}$ and
$\Lambda (\mu_{1}) = \alpha$.  By assumption, $\mu_{1}$ has a
nontrivial decomposition $\mu_{1} = \nu_{0} \boxplus \nu_{1}$. Since
neither component is a Dirac mass, we have that $\alpha > \Lambda
(\nu_{i})
> 0$ so that the decomposition $\mu = \nu_{0} \boxplus (\nu_{1}
\boxplus \mu_{2})$ violates minimality of $\alpha$.

We now claim that for every $t \in (0,\Lambda (\mu))$ there exists a
decomposition $\mu = \mu_{1} \boxplus \mu_{2}$ such that $\Lambda
(\mu_{1}) = t$.  To see this, let $\alpha$ be the supremum of all
values of $\Lambda (\mu_{1})$ that are $\leq t$.  The previous
paragraph implies that $ \alpha > 0$.  We again take a sequence $\mu
= \mu_{1,k} \boxplus \mu_{2,k}$ so that $\Lambda (\mu_{1,k})
\uparrow \alpha$ so that the cluster points $\mu_{i}$ satisfy $\mu =
\mu_{1} \boxplus \mu_{2}$ and $\Lambda (\mu_{1}) = \alpha$.  If
$\alpha < t$, by the above argument, we can break a chunk of size
less than $t- \alpha$ from $\mu_{2}$ so as to attain a
contradiction.  Thus, $\Lambda$ takes values on all of $(0,\Lambda
(\mu))$ as it ranges over divisors of $\mu$.

By induction, for every $n \in \mathbb{N}$ we can find a
decomposition $\mu = \mu_{n,1} \boxplus \ldots \boxplus \mu_{n,n}
\boxplus \delta_{x_{n}}$ such that $\Lambda (\mu_{n,i}) =
\Lambda(\mu)/n $ and $\mu_{n,i}$ has median $0$ for all $i = 1,
\ldots , n$.  The real number $x_{n}$ is the shift constant that
necessarily arises when centering these measures.  We now claim that
the array $\{ \mu_{n,j} \}_{n \in \mathbb{N} , j = 1, \ldots , n}$
converges to $\delta_{0}$ uniformly as $n \uparrow \infty$.

Observe that Corollary \ref{tcenter} implies that our array is
tight. Let $\nu$ be any cluster point and let $\{ \mu_{k_{n} ,
j_{n}} \}_{n \in \mathbb{N}}$ be a subsequence converging to $\nu$.
By Lemma \ref{BVconv}, $\varphi_{\{ \mu_{k_{n} , j_{n}} \}} (z)
\rightarrow \varphi_{\nu} (z)$ uniformly on compact subsets of a
Stolz angle $\Gamma^{\ast} \subseteq \Gamma$.  Now, observe that
$\Gamma'$ and $\Gamma^{\ast}$ may be disjoint. However, there exist
$a,b \in \mathbb{R}$ such that $ia \in \Gamma'$ and $ib \in
\Gamma^{\ast}$.

Observe that $\varphi_{\mu_{k_{n} , j_{n}}}$ is a normal family on
$\Gamma' \cup i[a,b]$, which implies precompactness.  By analytic
continuation, any cluster point must agree with $\varphi_{\nu}$ on
$i[a,b] \cap \Gamma^{\ast}$. This implies that $\varphi_{\nu}$ has
analytic continuatin to $\Gamma'$ that satisfies $\varphi_{\nu}(z) =
\lim_{n \uparrow \infty} \varphi_{\mu_{k_{n} , j_{n}}}(z)$ for $z
\in \Gamma'$.  Now, observe that the fact that $\Lambda(\mu_{k_{n} ,
j_{n}}) \rightarrow 0$ implies that $-\int_{\Gamma'}
\varphi_{\mu_{k_{n} , j_{n}}}(z) dA(z) \rightarrow 0$.  By Lemma
\ref{diff_bound}, we have a bound on the derivatives of these
functions so that, recalling that the imaginary parts of these
functions are negative, $\Im \varphi_{\mu_{k_{n} , j_{n}}}(z)
\rightarrow 0$ for $z \in \Gamma'$. This implies that $\Im
\varphi_{\nu} (z) = 0$ for $z \in \Gamma'$. Thus, $\nu$ is a dirac
mass and our median $0$ assumption implies that $\nu = \delta_{0}$

Thus, our array is tight and every subsequence converges to
$\delta_{0}$.  This implies that our array converges to $\delta_{0}$
uniformly over $n$. By Theorem \ref{pataid}, $\mu$ is infinitely
divisible.

\end{proof}

\begin{lemma}\label{bellemma}
Let $\{ \mu_{n} \}_{n \in \mathbb{N}}$ be a sequence of $t$-centered
measures that converge weakly to $\mu$.  Assume that for $s \in
\mathbb{R}$ such that $\mathfrak{F}_{\mu} (s) = t$, we have that
$\mathfrak{F}_{\mu}$ is continuous and strictly increasing in a
neighborhood $s$. Then $s = 0$ or, in other words, $\mu$ is
$t$-centered.
\end{lemma}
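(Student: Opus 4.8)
The plan is to combine the standard fact that $\mu_{n}\to\mu$ weakly implies $\mathfrak{F}_{\mu_{n}}(x)\to\mathfrak{F}_{\mu}(x)$ at every continuity point $x$ of $\mathfrak{F}_{\mu}$ with the defining one-sided inequalities of $t$-centering: since each $\mu_{n}$ is $t$-centered, $\mathfrak{F}_{\mu_{n}}(x)\geq t$ whenever $x\geq 0$ and $\mathfrak{F}_{\mu_{n}}(x)<t$ whenever $x<0$. The whole argument is then a short proof by contradiction on the sign of $s$.

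Suppose first that $s>0$. Since $\mathfrak{F}_{\mu}$ is continuous and strictly increasing on a neighborhood of $s$ and $\mathfrak{F}_{\mu}(s)=t$, I would fix $\epsilon>0$ small enough that $s-\epsilon>0$, that $s-\epsilon$ lies in that neighborhood, and that $\mathfrak{F}_{\mu}(s-\epsilon)<t$ --- the last of these uses strict monotonicity. As $s-\epsilon$ sits inside the interval on which $\mathfrak{F}_{\mu}$ is continuous, it is a continuity point of $\mathfrak{F}_{\mu}$, so $\mathfrak{F}_{\mu_{n}}(s-\epsilon)\to\mathfrak{F}_{\mu}(s-\epsilon)$; but $s-\epsilon>0$ forces $\mathfrak{F}_{\mu_{n}}(s-\epsilon)\geq t$ for all $n$, whence $\mathfrak{F}_{\mu}(s-\epsilon)\geq t$, a contradiction. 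The case $s<0$ is symmetric: fix $\epsilon>0$ with $s+\epsilon<0$, with $s+\epsilon$ in the good neighborhood, and with $\mathfrak{F}_{\mu}(s+\epsilon)>t$; then $\mathfrak{F}_{\mu_{n}}(s+\epsilon)<t$ for every $n$ yields $\mathfrak{F}_{\mu}(s+\epsilon)\leq t$ in the limit, contradicting the choice of $\epsilon$. Hence $s=0$.

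To restate this as ``$\mu$ is $t$-centered'', I would observe that $s=0$ gives $\mathfrak{F}_{\mu}(0)=t$, so monotonicity of $\mathfrak{F}_{\mu}$ yields $\mathfrak{F}_{\mu}(x)\geq\mathfrak{F}_{\mu}(0)=t$ for all $x\geq 0$, while for $x<0$ strict monotonicity near $0$ provides some $x'\in(x,0)$ with $\mathfrak{F}_{\mu}(x')<t$, so $\mathfrak{F}_{\mu}(x)\leq\mathfrak{F}_{\mu}(x')<t$. I do not expect a genuine obstacle here: the only points that need care are that the functions $\mathfrak{F}_{\mu_{n}}$ must be evaluated only at continuity points of $\mathfrak{F}_{\mu}$ (which is automatic, since the perturbed points $s\pm\epsilon$ lie in the neighborhood of continuity) and that it is the \emph{strict} monotonicity hypothesis, not mere continuity, that turns the limiting weak inequality into an actual contradiction.
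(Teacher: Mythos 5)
Your proof is correct and follows essentially the same route as the paper's: both arguments rest on the convergence $\mathfrak{F}_{\mu_{n}}(s\pm\epsilon)\to\mathfrak{F}_{\mu}(s\pm\epsilon)$ at continuity points near $s$ (which the paper extracts explicitly from the L\'evy metric) combined with the one-sided inequalities defining $t$-centering. The only cosmetic difference is that you run a contradiction on the sign of $s$, whereas the paper concludes $0\in(s-\epsilon,s+\epsilon)$ for every small $\epsilon$; these are the same argument.
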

\begin{proof}
Choose $\epsilon > 0$ such that $\mathfrak{F}_{\mu}$ is continuous
on $(s - 2\epsilon , s + 2\epsilon)$.  Let $0 < \epsilon' <
\epsilon$ and observe that utilizing the L\'evy metric and our
assumption of weak convergence, we have the following inequality for
$n$ large enough, independent of $\epsilon$:
$$\mathfrak{F}_{\mu} (s - \epsilon - \epsilon') - \epsilon' \leq \mathfrak{F}_{\mu_{n}}(s - \epsilon) \leq \mathfrak{F}_{\mu}(s - \epsilon + \epsilon') + \epsilon' $$
By continuity of $\mathfrak{F}_{\mu}$ at these points, it follows
that $\mathfrak{F}_{\mu_{n}}(s - \epsilon) \rightarrow
\mathfrak{F}_{\mu}(s - \epsilon)$.  Similarly
$\mathfrak{F}_{\mu_{n}}(s + \epsilon) \rightarrow
\mathfrak{F}_{\mu}(s + \epsilon)$.  Thus, for $n$ large enough, we
have that $\mathfrak{F}_{\mu_{n}}(s - \epsilon) < t$ and
$\mathfrak{F}_{\mu_{n}}(s - \epsilon) > t$.  This implies that $0
\in (s - \epsilon , s + \epsilon)$.  As $\epsilon$ was arbitrary,
this implies that $s = 0$.
\end{proof}

It is clear from the statement of the previous lemma that it will be
used in conjunction with Corollary \ref{belcorr}.  Indeed, it is
precisely the content of this corollary that measures with
non-trivial decompositions satisfy the hypotheses in Lemma
\ref{bellemma}, which will play a small but key role in the proof of
the following theorem.

\begin{theorem}\label{k1}
Let $\mu$ be a probability measure.  Then there exist measures
$\mu_{i}$ with $i = 0,1,2, \ldots $ such that $\mu_{0}$ is
$\boxplus$-infinitely divisible, $\mu_{i}$ is indecomposable for $i
= 1, 2, \ldots $, and $\mu = \mu_{0} \boxplus \mu_{1} \boxplus
\mu_{2} \boxplus \cdots$. This decomposition is not unique.
\end{theorem}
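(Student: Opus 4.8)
The plan is to mimic the classical Khintchine argument, using the additive free convolution functional $\Lambda$ as a replacement for variance (or for the Lévy-type concentration functional in the classical proof). The key structural dichotomy is provided by Theorem \ref{id}: a measure either is $\boxplus$-infinitely divisible, or it admits a nontrivial decomposition $\mu = \mu_1 \boxplus \mu_2$ in which at least one factor, say $\mu_1$, is indecomposable. So starting from $\mu$, if $\mu$ is already infinitely divisible we are done with $\mu_0 = \mu$; otherwise split off an indecomposable factor $\mu_1$ and continue with the complementary divisor. This produces, at each stage $n$, a decomposition $\mu = \mu_1 \boxplus \cdots \boxplus \mu_n \boxplus \rho_n$ with each $\mu_i$ indecomposable and $\rho_n$ a remaining divisor of $\mu$.

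First I would set up the bookkeeping so that the indecomposable factors are chosen to carry a definite amount of $\Lambda$-mass, or at least so that $\Lambda(\rho_n)$ is forced to decrease to its infimum. Concretely: let $\ell = \inf \Lambda(\rho)$ over all divisors $\rho$ of $\mu$ obtainable as a tail of such a chain (more safely, over all divisors of $\mu$ at all). Since $\Lambda$ is additive under $\boxplus$ and nonnegative, $\Lambda(\rho_n)$ is nonincreasing in $n$, hence converges to some limit $\ell' \geq \ell \geq 0$. I would argue that the chain can be arranged so that $\ell' = \ell$: whenever $\Lambda(\rho_n) > \ell$, there is a divisor of $\rho_n$ — hence of $\mu$ — with strictly smaller $\Lambda$, and using Theorem \ref{id} applied to $\rho_n$ (if $\rho_n$ is not already infinitely divisible) we may extract an indecomposable factor while genuinely reducing $\Lambda$; a diagonalization/greedy choice makes $\Lambda(\rho_n) \downarrow \ell$.

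Next I would take the limit. Center each partial product $\mu_1 \boxplus \cdots \boxplus \mu_n$ appropriately (say $t$-centered, or median $0$), so that by Corollary \ref{tcenter} the complementary divisors $\{\hat\rho_n\}$ form a tight family; extract a weakly convergent subsequence $\hat\rho_n \to \mu_0'$. By Lemma \ref{BVconv} / weak continuity of $\boxplus$, the centered partial products converge to some $\nu_0$, and after undoing the centering $\mu = \nu_0 \boxplus \mu_0'$ up to a translation; absorbing the translation, one writes $\mu = \mu_0 \boxplus \mu_1 \boxplus \mu_2 \boxplus \cdots$ in the weak$^\ast$ sense with $\mu_0$ playing the role of $\lim \hat\rho_n$. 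Here I would appeal to Lemma \ref{bellemma} together with Corollary \ref{belcorr} to control the centering constants: since each $\rho_n$ (being a nontrivial divisor, assuming $\mu$ is not a point mass to begin with — the point-mass case is trivial) has a continuous strictly increasing distribution function near its $t$-quantile, the $t$-centered translates of a convergent subsequence converge to the $t$-centered limit, so the translations needed are bounded and the limit $\mu_0$ is a genuine probability measure, not escaping to infinity.

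The main obstacle — and the heart of the argument — is showing that the limiting divisor $\mu_0$ is $\boxplus$-infinitely divisible. The natural route is: $\mu_0$ is a divisor of $\mu$ with $\Lambda(\mu_0) = \ell$, the \emph{minimal} possible value of $\Lambda$ over all divisors; I claim such a minimal divisor must be infinitely divisible. Indeed, if $\mu_0$ had a nontrivial decomposition $\mu_0 = \nu \boxplus \nu'$, then by additivity $\Lambda(\nu), \Lambda(\nu') < \Lambda(\mu_0) = \ell$, and since neither is a Dirac mass (property (3) of $\Lambda$) both are strictly positive yet strictly below $\ell$ — but $\nu$ is again a divisor of $\mu$, contradicting minimality of $\ell$ unless $\ell = 0$. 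If $\ell = 0$ then $\mu_0 = \delta_0$ (after centering), which is trivially infinitely divisible. If $\ell > 0$, minimality forces $\mu_0$ to be indecomposable; but an indecomposable measure with a nontrivial decomposition is a contradiction, so in fact one must show $\ell>0$ cannot occur with $\mu_0$ having \emph{only} indecomposable-or-worse factors — precisely the hypothesis of Theorem \ref{id}. So the clean finish is: the minimal divisor $\mu_0$ satisfies the hypothesis of Theorem \ref{id} vacuously or genuinely (every nontrivial decomposition of $\mu_0$ would yield a smaller-$\Lambda$ divisor of $\mu$, impossible, so $\mu_0$ has no nontrivial decomposition at all, i.e. is indecomposable or a point mass) — and then Theorem \ref{id} applies to give infinite divisibility, which combined with indecomposability forces $\mu_0$ to be a point mass, contradiction with $\ell > 0$. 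Hence $\ell = 0$, $\mu_0$ is (a translate of) $\delta_0$ composed with the genuinely infinitely divisible part, and the decomposition $\mu = \mu_0 \boxplus \mu_1 \boxplus \mu_2 \boxplus \cdots$ holds as required. Non-uniqueness follows exactly as in the classical case, e.g. by noting a single indecomposable measure may itself be perturbed, or by exhibiting two genuinely different groupings; I would include the standard one-line remark rather than a construction.
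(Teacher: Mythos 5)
Your construction of the chain and your treatment of the convergence of the partial products (via $t$-centering, Corollary \ref{tcenter}, Corollary \ref{belcorr} and Lemma \ref{bellemma}) follow the paper's route, but the final step --- showing the residual divisor $\mu_{0}$ is infinitely divisible --- contains a genuine gap. You define $\ell = \inf \Lambda(\rho)$ over all divisors $\rho$ of $\mu$ and argue by minimality. But every Dirac mass is a divisor of $\mu$ (since $\mu = \delta_{0} \boxplus \mu$) and $\Lambda(\delta_{c}) = 0$, so $\ell = 0$ always; the "minimal divisor" argument is vacuous and certainly does not force $\mu_{0}$ to be a point mass (indeed $\mu_{0}$ is typically a nontrivial infinitely divisible measure, e.g.\ when $\mu$ itself is semicircular). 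More fundamentally, you are aiming at the wrong target: the hypothesis of Theorem \ref{id} is not that $\mu_{0}$ is indecomposable, but that $\mu_{0}$ has \emph{no indecomposable divisors} --- that in every nontrivial decomposition of $\mu_{0}$ both factors are decomposable. Your attempt to derive that $\mu_{0}$ "has no nontrivial decomposition at all" contradicts what you need and leads to the tangled conclusion at the end of your argument.

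The mechanism that actually closes the proof is quantitative and is missing from your proposal. At stage $n$ one sets $\alpha_{n-1} = \sup\{\Lambda(\rho)\}$ over \emph{indecomposable} divisors $\rho$ of the current residual $\mu_{0,n-1}$, and greedily extracts an indecomposable $\mu_{n}$ with $\Lambda(\mu_{n}) > \alpha_{n-1}/2$. Since $\sum_{n}\Lambda(\mu_{n}) \leq \Lambda(\mu) < \infty$, one gets $\Lambda(\mu_{n}) \to 0$ and hence $\alpha_{n} \to 0$. One then proves the identity $\mu_{0,n} = \mu_{0} \boxplus \nu_{\infty,n+1}$ (which requires the convergence of the tails $\nu_{n,m} \to \delta_{0}$ and a separate limiting argument you do not supply), so that any non-Dirac indecomposable divisor $\nu$ of $\mu_{0}$ is an indecomposable divisor of every $\mu_{0,n}$, forcing $0 < \Lambda(\nu) \leq \alpha_{n} \to 0$, a contradiction. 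Thus $\mu_{0}$ has no indecomposable divisors and Theorem \ref{id} applies. Without the greedy choice controlling the suprema $\alpha_{n}$ and without the identity $\mu_{0,n} = \mu_{0}\boxplus\nu_{\infty,n+1}$, there is no way to rule out that the residual $\mu_{0}$ still carries an indecomposable factor, and your argument does not recover this.
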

\begin{proof}

If $\mu$ is infinitely-divisible, then we are done.  If not, by
Theorem \ref{id}, $\mu$ has non-trivial divisors. Otherwise, let
$\alpha_{0} = \sup \{ \Lambda(\rho) \}$ where the supremum is taken
over all indecomposable probability measures $\rho$ satisfying $\mu
= \nu \boxplus \rho$ for some probability measure $\nu$. Let
$\mu_{1}$ be chosen so that  $\mu = \mu_{0,1} \boxplus \mu_{1}$,
$\Lambda (\mu_{1}) > \alpha_{0} / 2$ and $\mu_{1}$ is
indecomposable.  By translating our measures, $\mu_{1}$ is assumed
to be $t$-centered for a $t$ to be chosen later (for the real number
$s$ such that $\mu_{1} \boxplus \delta_{s}$ is $t$-centered, we need
only consider the decomposition $\mu = (\mu_{0,1} \boxplus
\delta_{-s}) \boxplus (\mu_{1} \boxplus \delta_{s})$ and all of the
relevant properties will be satisfied).

At the $n$th stage of this process, we let $\alpha_{n-1} = \sup \{
\Lambda(\rho) \}$ where the supremum is taken over all
indecomposable probability measures $\rho$ satisfying $\mu_{0 , n-1}
= \nu \boxplus \rho$ for some measure $\nu$ (unless $\mu_{0,n-1}$ is
infinitely divisible, at which point we are done). We then let
$\mu_{n}$ be chosen such that $\mu_{0,n-1} = \mu_{0,n} \boxplus
\mu_{n}$, $\Lambda(\mu_{n}) > \alpha_{n} / 2$, and $\mu_{n}$ is
indecomposable. By translating $\mu_{0,n}$ and $\mu_{n}$, we may
further assume that $\mu_{1} \boxplus \cdots \boxplus \mu_{n}$ is
$t$-centered.  If at any point $\alpha_{n} = 0$, then by Theorem
\ref{id}, we are done. We therefore assume that $\alpha_{n}
> 0$ for all $n \in \mathbb{N}$

In what follows, we utilize the following notation:
$$ \nu_{n} = \mu_{1} \boxplus \cdots \boxplus \mu_{n} $$
$$ \nu_{n,m} = \mu_{m+1} \boxplus \cdots \boxplus \mu_{n}$$
$$ \nu_{\infty,m} =  \lim_{n \uparrow \infty}\mu_{m+1} \boxplus \cdots \boxplus \mu_{n}$$
where we will show at a latter point that the latter actually
converges.

Note that Corollary \ref{noncompact2corrolary} implies that
$\{\nu_{n,m} \}_{n,m \in \mathbb{N}} $ is a tight family.  It
follows that $\{ \nu_{n} \}_{n \in \mathbb{N}}$ is also tight.  We
now claim that this sequence of measures is actually convergent for
an appropriate choice of $t$ in the sense of $t$-centeredness.

Proceeding with our claim, observe that  $\Lambda(\mu) =
\Lambda(\mu_{0,n}) + \Lambda(\nu_{n}) = \Lambda(\mu_{0,n}) +
\Lambda(\nu_{m}) + \Lambda(\nu_{n,m})$ for all $m < n \in
\mathbb{N}$. Observe that $\Lambda(\mu_{0,n})$ is bounded and
decreasing so necessarily converges.  This implies that
$\Lambda({\nu_{n,m}})$ represents the tail of a convergent series
and must therefore go to $0$ uniformly as $m \uparrow \infty$ (note
that this implies that $\alpha_{n} \rightarrow 0$).

Let $\hat{\nu}_{n,m}$ be the translation of $\nu_{n,m}$ with median
$0$ and observe that $\Lambda(\nu_{n,m}) = -\int_{\Gamma'}
\Im\varphi_{\nu_{n,m}}(z)dA(z) = -\int_{\Gamma'}
\Im\varphi_{\hat{\nu}_{n,m}}(z)dA(z) = \Lambda(\hat{\nu}_{n,m})$. By
Lemma \ref{diff_bound}, $\varphi'_{\hat{\nu}_{n,m}}$ is bounded on
$\Gamma'$. Since $\Lambda (\hat{\nu}_{n,m}) \rightarrow 0$ as $m
\uparrow \infty$, we have that $-\Im\varphi_{\hat{\nu}_{n,m}}(z)
\rightarrow 0$ uniformly over $\Gamma'$ as $m \uparrow \infty$.  By
Lemma \ref{BVconv}, any cluster point $\hat{\nu}$ of $\{
\hat{\nu}_{n,m} \}_{n,m \in \mathbb{N}}$ must satisfy
$\varphi_{\nu}(z) = 0$ for $z \in \Gamma'$.  This implies that
$\hat{\nu}$ is a Dirac mass.  Thus, any cluster point $\nu$ of $\{
\nu_{n,m}\}$ as $m \uparrow \infty$ must also be a Dirac mass.

Thus, the set of cluster points of $\{ \nu_{n} \}_{n \in
\mathbb{N}}$ is of the form $\{ \rho \boxplus \delta_{r} \}_{r \in
K}$ where $K$ is a compact subset of $\mathbb{R}$.  Since we are
assuming that $\alpha_{n} > 0$ for all $n \in \mathbb{N}$, we have
that $\rho = \mu_{1} \boxplus \nu$ where $\nu$ is some non-trivial
cluster point of $\{ \nu_{n,1}  \}_{n \in \mathbb{N}}$.  In
particular, $\rho$ has a non-trivial decomposition so that, by
Corollary \ref{belcorr}, there exist points $s \in \mathbb{R}$ and
$t \in (0,1)$ such that $\mathfrak{F}_{\rho}(s) = t$ and
$\mathfrak{F}_{\rho}$ is continuous and increasing in a neighborhood
of $s$.  We therefore assume that $\{\nu_{n} \}_{n \in \mathbb{N}}$
are $t$-centered (we may do this retroactively since this only
translates our measures $\nu_{n}$ and does not effect the fact that
they cluster to translations of $\rho$).  By \ref{bellemma}, all
cluster points of $\{ \nu_{n} \}_{n \in \mathbb{N}}$ must be
$t$-centered so that, by uniqueness of this property, our sequence
converges to a single measure.

Now, observe that these facts together imply that $\{ \nu_{n,m}
\}_{n,m}$ must converge to the Dirac mass at $0$ as $m \uparrow
\infty$.  This further implies that $\nu_{\infty, m}$ is the limit
of a convergent sequence.  We next claim that if $\mu_{0}$ is any
cluster point of $\{ \mu_{0,n} \}_{n \in \mathbb{N}}$, then $\mu =
\lim_{n \uparrow \infty} \mu_{0} \boxplus \nu_{n}$.

To see this, let $i_{n}$ be a subsequence along which $\mu_{0,n}$
converges to $ \mu_{0}$.  Observe that $\lim_{n \uparrow \infty}
\mu_{0} \boxplus \nu_{n} = \lim_{n \uparrow \infty}  \mu_{0,i_{n}}
\boxplus \nu_{n} = \lim_{n \uparrow \infty} \mu_{0,i_{n}} \boxplus
\nu_{i_{n}} \boxplus \nu_{n , i_{n}} = \lim_{n \uparrow \infty} \mu
\boxplus \nu_{n , i_{n}}$.  As $n \rightarrow \infty$, the right
hand side converges to $\mu \boxplus \delta_{0} = \mu$, proving our
claim.

We have shown that $\mu =\lim_{n \uparrow \infty} \mu_{0} \boxplus
\nu_{n}$ so that our theorem will be proven once we show that
$\mu_{0}$ is infinitely divisible.  Towards this end, we claim that
$\mu_{k,0} = \mu_{0} \boxplus \nu_{\infty , k + 1}$ for all $k \in
\mathbb{N}$. To see this, observe that the right hand side is equal
to $ \mu_{0} \boxplus \nu_{\infty , k + 1} = \lim_{n \uparrow
\infty} \mu_{0,i_{n}} \boxplus \nu_{\infty , k + 1} = \lim_{n
\uparrow \infty} \mu_{0 , i_{n}} \boxplus \nu_{i_{n} , k+1} \boxplus
\nu_{\infty , i_{n}} = \lim_{n \uparrow \infty} \mu_{0,k} \boxplus
\nu_{\infty , i_{n}} \rightarrow \mu_{0,k}$ as $n \rightarrow
\infty$.  This proves our claim.

Now, assume that $\mu_{0}$ has a decomposition $\mu_{0} = \rho
\boxplus \nu$ where $\nu$ is indecomposable.  Assume that $\Lambda
(\nu)
> 0$.  Pick $n$ large enough so that $\alpha_{n} < \Lambda(\nu)$ and
recall that $\mu_{n,0} = \mu_{0} \boxplus \nu_{\infty , n+1}$. The
left hand side has no indecomposable divisior whose $\Lambda$ value
is larger than $\alpha_{n}$.  This contradiction implies that
$\mu_{0}$ has no indecomposable divisors so that, by Theorem
\ref{id}, our theorem holds.
\end{proof}

The failure of uniqueness will be addressed in Section
\ref{applicationsection}.

\section{Background and Terminology for the Multiplicative
Convolution of Measures Supported on the Positive Real
Line}\label{backgroundsubsection}

Let $x,y$ be positive random variables in $(A,\tau)$ with respective
distributions $\mu$ and $\nu$.  We denote by $\mu \boxtimes \nu$ the
distribution of the random variable $xy$. Since $\tau$ is a trace,
the distribution of $xy$ is the same as that of $y^{1/2}xy^{1/2}$,
so that $\boxtimes$ preserves the property that the distribution is
a measure supported on the positive real numbers.

Let $M_{\mathbb{R}^{+}}$ denote the set of probability measures
supported on $\mathbb{R^{+}}$.  Observe that, with exception of
$\delta_{0}$, all such measures have nonzero first moment and we
assume throughout that we are not dealing with this measure.
Consider the following function:
$$ \psi_{\mu}(z) = \int_{0}^{\infty} \frac{zt}{1-zt} d\mu(t) $$ for
$z \in \mathbb{C} \setminus \mathbb{R}^{+}$.  As seen in \cite{V2}
and \cite{BV1}, $\psi_{\mu} |_{i \mathbb{C}^{+}}$ is univalent and
maps into an open neighborhood about the interval $(\mu(\{ 0 \}) - 1
, 0)$. It is also true that $\psi_{\mu}(i \mathbb{C}^{+}) \cap
\mathbb{R} = (\mu(\{ 0 \}) - 1 , 0)$.

Let $\Omega_{\mu} = \psi_{\mu}(i \mathbb{C}^{+})$ and let
$\chi_{\mu}: \Omega_{\mu} \rightarrow i \mathbb{C}^{+}$ denote the
inverse function.  We refer to the $S$-tranform as the following
function:
$$S_{\mu} (z) = \frac{(1+z)\chi_{\mu} (z)}{z}$$  These functions
have the following properties which will be used, often without
reference, in what follows:
\begin{enumerate}
\item
$S_{\mu \boxtimes \nu} (z) = S_{\mu}(z)S_{\nu}(z)$ for all $z$ in
their common domain.

\item
$S_{\mu}(z) > 0$ and $S'_{\mu}(z) \leq 0$ for $z \in (\mu(\{ 0 \}) -
1 , 0)$.

\item
$(\mu_{1} \boxtimes \mu_{2}) (\{ 0 \}) = \max \{\mu_{1}(\{0 \}) ,
\mu_{2}(\{0 \})  \}$

\item
$\chi'_{\mu} (z) > 0$ for all $z \in (\mu(\{ 0 \}) - 1 , 0)$.

\item
$\chi_{\mu \boxtimes \delta_{c}}(z) = \chi_{\mu} (z) / c$ and
$S_{\mu \boxtimes \delta_{c}}(z) = S_{\mu} (z) / c$.

\end{enumerate}

Observe that $(3)$ above implies a multiplicative version of Lemma
\ref{extensionlemma}.  That is, for any nontrivial decomposition
$\mu = \mu_{1} \boxtimes \mu_{2}$, (3) implies that real part of the
domain of $\chi_{\mu}$ is contained in the real part of the domain
of $\chi_{\mu_{i}}$ for each $i = 1,2$.  We will use this fact
without reference throughout.

The following results on convergence and tightness were first proven
in full generality in \cite{BV1}.

\begin{lemma}\label{multcont}
Let $\{ \mu_{n} \}_{n \in \mathbb{N}}$ and $\{ \nu_{n} \}_{n \in
\mathbb{N}}$ be sequence of probability measures on
$\mathbb{R}^{+}$.  Assume the these sequences weakly converge to
$\mu$ and $\nu$ respectively.  Then $\{ \mu_{n} \boxtimes \nu_{n}
\}_{n \in \mathbb{N}}$ converges to $\mu \boxtimes \nu$ in the
weak$^{\ast}$ topology.
\end{lemma}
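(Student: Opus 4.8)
The claim to be proved is Lemma \ref{multcont}: weak$^{\ast}$-continuity of $\boxtimes$ on $M_{\mathbb{R}^{+}}$. The plan is to reduce it to the analogous additive statement (Lemma 2.1 in the excerpt) via a logarithm change of variables combined with the bijective correspondence between multiplicative convolution on $\mathbb{R}^{+}$ and additive free convolution on $\mathbb{R}$, or, alternatively, to argue directly with the $S$-transform and the convergence criterion for the functions $\psi_{\mu_n}$. Given that the excerpt has developed everything through the $F$- and Voiculescu transforms in the additive case and is clearly following the pattern of \cite{BV1}, I would give the self-contained transform proof rather than invoking an external dictionary.

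\medskip
\noindent\textbf{Step 1: translate weak convergence into convergence of $\psi$.}
First I would record the (standard, from \cite{BV1}) fact that a sequence $\{\rho_n\}\subset M_{\mathbb{R}^{+}}$ converges weakly to $\rho$ if and only if $\psi_{\rho_n}\to\psi_{\rho}$ uniformly on compact subsets of a fixed truncated sector $\{z\in i\mathbb{C}^{+}: |z|<\eta,\ |\arg(iz)|<\delta\}$, together with the uniform smallness condition $\psi_{\rho_n}(z)=o(1)$ as $z\to 0$ in that sector, uniformly in $n$; and moreover weak convergence forces a uniform lower bound $\rho_n(\{0\})\ge 0$ on the only relevant constant, so the domains $\Omega_{\rho_n}$ of $\chi_{\rho_n}$ contain a common interval $(-\theta,0)$. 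This is the multiplicative analogue of Lemma \ref{BVconv} and I would cite it from \cite{BV1}; the key output is that $\chi_{\mu_n},\chi_{\nu_n}$, hence $S_{\mu_n},S_{\nu_n}$, are all defined on a common interval and converge there, uniformly on compacta, to $\chi_{\mu},\chi_{\nu}$ respectively.

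\medskip
\noindent\textbf{Step 2: multiply the $S$-transforms and invert.}
On the common interval we have $S_{\mu_n\boxtimes\nu_n}=S_{\mu_n}S_{\nu_n}\to S_{\mu}S_{\nu}=S_{\mu\boxtimes\nu}$, uniformly on compacta, using property (1) of the $S$-transform and the fact that $S_{\mu_n}(0^{-})\to 1/m_1(\mu)$ etc.\ keeps the products bounded away from $0$ and $\infty$. From $S$ one recovers $\chi$ via $\chi_\rho(z)=\frac{z}{1+z}S_\rho(z)$, so $\chi_{\mu_n\boxtimes\nu_n}\to\chi_{\mu\boxtimes\nu}$ on a common interval. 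Inverting (the functions $\psi$ are univalent with $\psi'\neq 0$ on $i\mathbb{C}^{+}$, by \cite{V2},\cite{BV1}), one gets $\psi_{\mu_n\boxtimes\nu_n}\to\psi_{\mu\boxtimes\nu}$ uniformly on compact subsets of a common sector. Finally I would verify the uniform-smallness tail condition for $\{\psi_{\mu_n\boxtimes\nu_n}\}$ — it follows because $\psi_\rho(z)\sim m_1(\rho)\,z$ near $0$ and the first moments $m_1(\mu_n)m_1(\nu_n)$ stay in a compact subset of $(0,\infty)$ — and then invoke the converse direction of the convergence criterion from Step 1 to conclude $\mu_n\boxtimes\nu_n\to\mu\boxtimes\nu$ weakly.

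\medskip
\noindent\textbf{Main obstacle.}
The delicate point is the bookkeeping at the boundary $z=0$ and at possible atoms at $0$: the domain $\Omega_\rho$ of $\chi_\rho$ is a neighborhood of $(\rho(\{0\})-1,0)$, which can shrink, and $S_\rho$ blows up as $z\downarrow\rho(\{0\})-1$. One must use property (3) — $(\mu\boxtimes\nu)(\{0\})=\max\{\mu(\{0\}),\nu(\{0\})\}$ — to see that the limiting common domain does not degenerate, and lower-semicontinuity of the mass of the atom at $0$ under weak limits to control $\mu_n(\{0\})$. Once it is established that all the transforms live on a fixed interval $(-\theta,0)$ with uniform bounds, the rest is the routine "multiply and invert" argument; so I would concentrate the real work on pinning down that common domain and the uniform tail estimate near $0$.
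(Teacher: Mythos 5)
The paper does not actually prove Lemma \ref{multcont}; it is quoted verbatim from \cite{BV1} (together with Lemmas \ref{multlemma} and \ref{multlemma2}), so there is no in-paper argument to compare against. Your sketch is essentially a reconstruction of the Bercovici--Voiculescu proof, and it is the right strategy: pass to the $S$-transforms on a common interval via the convergence criterion, multiply, and invert via the converse direction of the same criterion after checking tightness of $\{\mu_n\boxtimes\nu_n\}$ and that $\delta_0$ is excluded from its weak closure (for which the identity $(\mu\boxtimes\nu)(\{0\})=\max\{\mu(\{0\}),\nu(\{0\})\}$ is exactly the right tool). In fact the cleanest write-up in the paper's own notation would use Lemma \ref{multlemma2} directly on the interval $(-\alpha,-\beta)$ rather than your $\psi$-on-a-sector variant, but the two are equivalent.

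Two small corrections. First, you invoke \emph{lower} semicontinuity of $\rho\mapsto\rho(\{0\})$ to control $\mu_n(\{0\})$; the direction you actually need, and the one that holds, is \emph{upper} semicontinuity ($\limsup_n\mu_n(\{0\})\le\mu(\{0\})$, since $\{0\}$ is closed), which is what guarantees that the intervals $(\mu_n(\{0\})-1,0)$ eventually contain a common nondegenerate interval when $\mu\neq\delta_0$. Second, your justification of the uniform smallness of $\psi_{\mu_n\boxtimes\nu_n}$ near $0$ via ``the first moments $m_1(\mu_n)m_1(\nu_n)$ stay in a compact subset of $(0,\infty)$'' is not available in this generality: weak convergence on $\mathbb{R}^{+}$ does not bound first moments (they may be infinite or diverge along the sequence), and indeed $S_\mu(0^-)=1/m_1(\mu)$ may vanish. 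This is precisely why the criterion of Lemma \ref{multlemma2} is phrased on an interval $(-\alpha,-\beta)$ bounded away from $0$, with the lower bound $\inf_n|S(-\beta)|>0$ replacing any appeal to moments; the estimate $|\psi_\rho(z)|\le M|z|\,\rho([0,M])+\rho((M,\infty))$ together with tightness gives the uniform behavior near $0$ without moments. With these repairs your argument goes through.
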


\begin{lemma}\label{multlemma}
Let $M$ be a set of probability measures on $\mathbb{R}^{+}$.  The
following conditions are equivalent.

\begin{enumerate}

\item
$M$ is tight and the weak$^{\ast}$ closure of $M$ does not contain
$\delta_{0}$.

\item
There exists an $\alpha > 0$ such that
\begin{enumerate}
\item
$-\alpha$ belongs to the domain of $\chi_{\mu}$ for all $\mu \in M$.

\item
$\sup\{ |\chi_{\mu(-\alpha)| : \mu \in M} \} < \infty$

\item
$\inf \{ |\chi_{\mu}(-\beta)| : \mu \in M| \} > 0$ for all $\beta
\in (0 , \alpha)$.

\end{enumerate}

\item
There exists an $\alpha > 0$ such that
\begin{enumerate}
\item
$-\alpha$ belongs to the domain of $S_{\mu}$ for all $\mu \in M$.

\item
$\sup\{ |S_{\mu(-\alpha)| : \mu \in M} \} < \infty$

\item
$\inf \{ |S_{\mu}(-\beta) : \mu \in M| \} > 0$ for all $\beta \in (0
, \alpha)$.

\end{enumerate}

\end{enumerate}

\end{lemma}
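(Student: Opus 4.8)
The plan is to prove this as a direct analogue of Lemma~\ref{BVconv} together with its tightness companion (Lemma~\ref{tightdomain}), transported from the additive setting to the multiplicative one via the logarithm of the $S$-transform. The scheme $(1)\Leftrightarrow(2)$ is essentially the content of the multiplicative convergence criterion of Bercovici--Voiculescu (\cite{BV1}), so I would organize the argument around the cycle $(1)\Rightarrow(2)\Rightarrow(3)\Rightarrow(1)$, with the passage $(2)\Leftrightarrow(3)$ being a formal manipulation.

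First I would prove $(1)\Rightarrow(2)$. Assume $M$ is tight and $\delta_0$ is not a weak cluster point. Tightness gives a compact $[0,R]$ carrying all but $\epsilon$ of each $\mu\in M$; the separation from $\delta_0$ gives $\eta>0$ with $\mu([0,\eta])\le 1-\epsilon$ for all $\mu$, hence a uniform lower bound on first moments. From the integral representation $\psi_\mu(z)=\int_0^\infty \frac{zt}{1-zt}\,d\mu(t)$ one checks directly that on the segment $i(-\infty,0)$ (equivalently on negative reals after the change of variables, matching the notation in the excerpt where $\psi_\mu(i\mathbb{C}^+)\cap\mathbb{R}=(\mu(\{0\})-1,0)$) the family $\{\psi_\mu\}$ is locally uniformly bounded and its derivative is bounded away from $0$, uniformly in $\mu\in M$. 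A normal-families / Koebe-type argument then shows that the images $\Omega_\mu$ contain a common interval $(-\alpha,0)$ with $\alpha$ independent of $\mu$, and that the inverses $\chi_\mu$ form a normal family there; uniform boundedness of $\chi_\mu(-\alpha)$ is the statement that $\psi_\mu$ does not ``spread out'', which follows from the uniform lower bound on the derivative of $\psi_\mu$ near $0$, while $\inf_\mu|\chi_\mu(-\beta)|>0$ for $\beta\in(0,\alpha)$ follows because $\psi_\mu(z)\to 0$ only as $z\to 0$, uniformly. This yields $(2)$.

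Next, $(2)\Rightarrow(3)$ is immediate from the defining relation $S_\mu(z)=\frac{(1+z)\chi_\mu(z)}{z}$: on a fixed segment $(-\alpha,0)$ the factor $\frac{1+z}{z}$ is bounded above and below away from $0$ and $\infty$, so the three conditions on $\chi_\mu$ transfer verbatim to $S_\mu$ (after possibly shrinking $\alpha$ so that $-\alpha>-1$). Conversely $(3)\Rightarrow(2)$ by the same bounded multiplicative factor. Finally $(3)\Rightarrow(1)$: given a sequence $\mu_n\in M$, the uniform bounds on $S_{\mu_n}$ on $(-\alpha,0)$ together with the monotonicity $S'_{\mu_n}\le 0$ and positivity $S_{\mu_n}>0$ (property~(2) of the $S$-transform listed in the excerpt) make $\{S_{\mu_n}\}$ a uniformly bounded, equicontinuous (via a Cauchy-estimate on the derivative in a complex neighborhood, as $S_\mu$ is analytic off $\mathbb{R}^+$) family, hence there is a locally uniformly convergent subsequence; by the converse direction of the Bercovici--Voiculescu criterion a locally uniformly convergent sequence of $S$-transforms that stays bounded and bounded away from $0$ forces weak convergence of the $\mu_n$ to a probability measure $\mu$ with $\mu\neq\delta_0$ (the lower bound $\inf|S_{\mu_n}(-\beta)|>0$ is exactly what prevents the limit from being $\delta_0$). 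Since every subsequence has a further subsequence converging weakly to a non-$\delta_0$ limit, $M$ is tight with $\delta_0$ not in its closure.

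I expect the main obstacle to be $(1)\Rightarrow(2)$, specifically producing a \emph{common} interval $(-\alpha,0)$ in the domains of all the $\chi_\mu$ with the two-sided quantitative control. The subtlety is that tightness alone controls the measures near $0$ and near $+\infty$ differently: the lower bound on first moments (from separation from $\delta_0$) governs the behavior of $\psi_\mu$ as $z\to 0^-$ and hence how far $\Omega_\mu$ extends to the left, whereas the upper tail bound (from tightness) governs $\sup_\mu|\chi_\mu(-\alpha)|$. Making these two estimates uniform simultaneously, and handling the atom $\mu(\{0\})$ correctly so that $(\mu(\{0\})-1,0)\supseteq(-\alpha,0)$ uniformly (which uses precisely that $\delta_0\notin\overline{M}$ to bound $\mu(\{0\})$ away from $1$), is the technical heart. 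Everything else is the formal transfer $\chi_\mu\leftrightarrow S_\mu$ and an appeal to \cite{BV1} for the convergence equivalence, as with the additive Lemma~\ref{BVconv}.
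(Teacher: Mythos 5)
You should first be aware that the paper contains no proof of this lemma to compare against: it is quoted from \cite{BV1} (it is one of the results introduced by the sentence ``The following results on convergence and tightness were first proven in full generality in \cite{BV1}''). Judged on its own terms, your architecture is the standard one, and your $(2)\Leftrightarrow(3)$ step is correct: at each fixed $\beta\in(0,\alpha)$ the factor $(1-\beta)/(-\beta)$ relating $S_{\mu}(-\beta)$ to $\chi_{\mu}(-\beta)$ is a fixed nonzero constant, conditions (b) and (c) are imposed pointwise in $\beta$, and $-\alpha>-1$ is automatic since each $\chi_{\mu}$ lives on $(\mu(\{0\})-1,0)\subset(-1,0)$.

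The genuine gap is in $(3)\Rightarrow(1)$. After extracting a locally uniformly convergent subsequence of $S$-transforms you invoke ``the converse direction of the Bercovici--Voiculescu criterion'' to conclude weak convergence; but that criterion --- Lemma \ref{multlemma2} in this paper --- takes tightness and $\delta_{0}\notin\overline{M}$ as \emph{hypotheses}, so using it to establish tightness is circular. The fix is elementary and makes the normal-families/Koebe apparatus unnecessary throughout. Write $|\psi_{\mu}(-x)|=\int_{0}^{\infty}\frac{xt}{1+xt}\,d\mu(t)$, which is increasing in $x>0$ and maps $(0,\infty)$ onto $(0,1-\mu(\{0\}))$. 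Condition (c) gives $c_{\beta}:=\inf_{\mu}|\chi_{\mu}(-\beta)|>0$, hence $|\psi_{\mu}(-x)|<\beta$ for all $\mu$ whenever $0<x<c_{\beta}$; since $\frac{xt}{1+xt}\geq\frac{1}{2}$ for $t\geq 1/x$, taking $x=c_{\beta}/2$ yields $\sup_{\mu\in M}\mu([2/c_{\beta},\infty))\leq 2\beta$, which is tightness. Condition (b) gives $x_{\mu}:=|\chi_{\mu}(-\alpha)|\leq C$, whence $\alpha=\int\frac{x_{\mu}t}{1+x_{\mu}t}\,d\mu(t)\leq C\eta+\mu([\eta,\infty))$, so $\mu([\eta,\infty))\geq\alpha/2$ for $\eta=\alpha/(2C)$, which excludes $\delta_{0}$ from the weak$^{\ast}$ closure. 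The same two monotone estimates run in reverse give $(1)\Rightarrow(2)$: separation from $\delta_{0}$ yields $\eta,\epsilon>0$ with $\mu((\eta,\infty))\geq\epsilon$, hence $|\psi_{\mu}(-x)|\geq\epsilon\frac{x\eta}{1+x\eta}$, which produces a common $\alpha=\epsilon/2$ together with the uniform bound on $|\chi_{\mu}(-\alpha)|$; and tightness ($\mu((R_{\beta},\infty))\leq\beta/2$) yields $|\psi_{\mu}(-x)|\leq xR_{\beta}+\beta/2$ and therefore $|\chi_{\mu}(-\beta)|\geq\beta/(2R_{\beta})>0$. So the step you identify as the ``technical heart'' is in fact routine, while the step you treat as a citation is where the actual work (or the circularity) lies.
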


\begin{lemma}\label{multlemma2}

Let $\{ \mu_{n} \}_{n \in \mathbb{N}}$ be a tight sequence of
probability measures on $\mathbb{R}^{+}$ such that $\delta_{0}$ is
not in the weak$^{\ast}$ closure of our sequence.  The following are
equivalent:

\begin{enumerate}
\item
The sequence $\{ \mu_{n} \}_{n \in \mathbb{N}}$ converges to a
measure $\mu$ in the weak$^{\ast}$ topology.

\item
There exist positive numbers $\beta < \alpha$ such that the sequence
$\{ \chi_{\mu_{n}} \}$ converges uniformly on the interval $(-\alpha
, -\beta)$ to a function $\chi$.

\item
There exist positive numbers $\beta < \alpha$ such that the sequence
$\{ S_{\mu_{n}} \}$ converges uniformly on the interval $(-\alpha ,
-\beta)$ to a function $S$.

\end{enumerate}

Moreover, if $(1)$ and $(2)$ are satisfied, we have $\chi =
\chi_{\mu}$ in $(-\alpha , -\beta)$.
\end{lemma}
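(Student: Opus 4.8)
The plan is to prove $(2)\Leftrightarrow(3)$ by a direct algebraic comparison, and to prove $(1)\Rightarrow(2)$ and $(2)\Rightarrow(1)$ by translating weak$^{\ast}$ convergence into pointwise convergence of the functions $\psi_{\mu_{n}}$ on the negative real axis and then inverting, with Lemma \ref{multlemma} providing the a priori bounds that make the inversion work. For $(2)\Leftrightarrow(3)$: any real interval $(-\alpha,-\beta)$ on which the $\chi_{\mu_{n}}$ are defined is contained in $(-1,0)$, since the real part of the domain of $\chi_{\nu}$ lies in $(\nu(\{0\})-1,0)\subseteq(-1,0)$. Hence $z\mapsto(1+z)/z$ is a fixed continuous function on $[-\alpha,-\beta]$, bounded above and bounded away from $0$ in modulus, and the identity $S_{\mu_{n}}(z)=(1+z)\chi_{\mu_{n}}(z)/z$ shows at once that $\{\chi_{\mu_{n}}\}$ converges uniformly on $(-\alpha,-\beta)$ if and only if $\{S_{\mu_{n}}\}$ does, the limits being related by $S(z)=(1+z)\chi(z)/z$.

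For $(1)\Rightarrow(2)$: the sequence together with its limit $\mu$ is tight with $\delta_{0}$ outside its weak$^{\ast}$ closure, so Lemma \ref{multlemma} supplies an $\alpha_{0}>0$ such that $-\alpha_{0}$ is in the domain of every $\chi_{\mu_{n}}$ and of $\chi_{\mu}$, with $\sup_{n}|\chi_{\mu_{n}}(-\alpha_{0})|<\infty$ and $\inf_{n}|\chi_{\mu_{n}}(-\beta_{0})|>0$ for each $\beta_{0}\in(0,\alpha_{0})$; fix such a $\beta_{0}$. Since each $\chi_{\mu_{n}}$ is increasing on $(-\alpha_{0},0)$ (property $\chi'_{\mu}>0$) and tends to $0$ as $z\to0^{-}$, the values $w=\chi_{\mu_{n}}(z)$ for $z\in(-\alpha_{0},-\beta_{0})$ all lie in one compact interval $[-C,-c]\subset(-\infty,0)$ determined by these bounds. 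On $[-C,-c]$ the functions $\psi_{\mu_{n}}$ are monotone and converge pointwise to the continuous function $\psi_{\mu}$ (weak$^{\ast}$ convergence applied to the bounded continuous integrand $t\mapsto wt/(1-wt)$), hence converge uniformly there (monotone functions on a compact interval converging pointwise to a continuous limit converge uniformly). Now $\psi_{\mu_{n}}(w)=z$ gives $|\psi_{\mu}(w)-z|=|\psi_{\mu}(w)-\psi_{\mu_{n}}(w)|\to0$ uniformly in $z\in(-\alpha_{0},-\beta_{0})$; since $\psi_{\mu}$ is a strictly increasing continuous bijection of $[-C,-c]$ whose inverse $\chi_{\mu}$ is uniformly continuous, this forces $\chi_{\mu_{n}}(z)=w\to\chi_{\mu}(z)$ uniformly on $(-\alpha_{0},-\beta_{0})$. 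This proves $(2)$ with limit $\chi_{\mu}$, and running the same inversion pointwise shows that the limit of $\{\chi_{\mu_{n}}\}$ on \emph{any} interval where $(2)$ holds must equal $\chi_{\mu}$, which is the final "moreover" claim.

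For $(2)\Rightarrow(1)$: let $\rho$ be any weak$^{\ast}$ cluster point of $\{\mu_{n}\}$, say $\mu_{n_{k}}\to\rho$; these exist by tightness and satisfy $\rho\neq\delta_{0}$ by hypothesis. Fix $\gamma\in(0,\alpha)$ and put $w_{k}=\chi_{\mu_{n_{k}}}(-\gamma)$. By $(2)$, $w_{k}\to w^{\ast}:=\chi(-\gamma)<0$, so the $w_{k}$ remain in a fixed compact subinterval of $(-\infty,0)$, on which the $\psi_{\mu_{n_{k}}}$ are uniformly Lipschitz and converge pointwise to $\psi_{\rho}$; therefore $-\gamma=\psi_{\mu_{n_{k}}}(w_{k})\to\psi_{\rho}(w^{\ast})$, so $\psi_{\rho}(w^{\ast})=-\gamma$, i.e.\ $\chi_{\rho}(-\gamma)=\chi(-\gamma)$. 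As $\gamma\in(0,\alpha)$ was arbitrary, $\chi_{\rho}\equiv\chi$ on $(-\alpha,0)$; inverting and using that the $\psi$-transform (equivalently, the Cauchy transform) determines the measure, $\rho$ is uniquely pinned down by $\chi$. Thus all weak$^{\ast}$ cluster points of the tight sequence $\{\mu_{n}\}$ coincide with a single measure $\mu$, so $\mu_{n}\to\mu$ weakly, and $\chi=\chi_{\rho}=\chi_{\mu}$.

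The main obstacle is the inversion step in $(1)\Rightarrow(2)$: converting convergence of the $\psi_{\mu_{n}}$ into \emph{uniform} convergence of the inverses $\chi_{\mu_{n}}$ on a \emph{common} interval requires a priori compactness of $\{\chi_{\mu_{n}}\}$ at a bounded distance from $0$, which is exactly what the quantitative bounds of Lemma \ref{multlemma} are there to provide; once weak$^{\ast}$ convergence has been reduced to pointwise convergence of $\psi_{\mu_{n}}$ on $(-\infty,0)$, the equivalence $(2)\Leftrightarrow(3)$ and the identification of cluster points in $(2)\Rightarrow(1)$ are routine.
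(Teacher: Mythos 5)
The paper does not actually prove this lemma --- it is quoted as a known result from \cite{BV1} --- so there is no in-paper argument to compare against; your proof is a correct, self-contained reconstruction along the standard Bercovici--Voiculescu lines (reduce weak$^{\ast}$ convergence to locally uniform convergence of the $\psi$-transforms on a compact subinterval of $(-\infty,0)$, then invert, with Lemma \ref{multlemma} supplying exactly the a priori compactness of $\{\chi_{\mu_{n}}\}$ that makes the inversion uniform). Two small points to tighten: in $(2)\Rightarrow(1)$ the fixed $\gamma$ must be taken in $(\beta,\alpha)$ rather than $(0,\alpha)$ (which is harmless, since agreement of $\chi_{\rho}$ with $\chi$ on any subinterval already determines $\psi_{\rho}$ by real-analyticity and hence $\rho$), and the asserted strict negativity $\chi(-\gamma)<0$ is not automatic from $\chi_{\mu_{n}}(-\gamma)<0$ but does follow from tightness: choosing $M$ with $\sup_{n}\mu_{n}((M,\infty))<\gamma/2$ gives $|\psi_{\mu_{n}}(w)|\leq |w|M+\gamma/2$, so $\chi_{\mu_{n_{k}}}(-\gamma)\to 0$ would force $\psi_{\mu_{n_{k}}}(\chi_{\mu_{n_{k}}}(-\gamma))\to 0\neq-\gamma$.
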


In a manner analagous to the additive case, we have the following
subordination result for multiplicative convolution.  This was first
proven in full generality in \cite{Bi} and is proven by different
means in \cite{BerBel2}.

\begin{theorem}
Let $\mu$ be a probability measure on $\mathbb{R}^{+}$ with
decomposition $\mu = \mu_{1} \boxtimes \mu_{2}$.  There exist
analytic subordination functions $\omega_{i} : \mathbb{C} \setminus
\mathbb{R}^{+} \rightarrow \mathbb{C} \setminus \mathbb{R}^{+}$ for
$i = 1,2$, such that:

\begin{enumerate}
\item
$\omega_{i}(0-) = 0$

\item
for every $\lambda \in \mathbb{C}^{+}$ we have that
$\omega_{i}(\bar{\lambda}) = \overline{\omega_{i}(\lambda)}$ ,
$\omega_{i} (\lambda) \in \mathbb{C}^{+}$ and
$\textrm{arg}(\omega_{j}(\lambda)) \geq \textrm{arg}(\lambda)$

\item
$\psi_{\mu} (\lambda) = \psi_{\mu_{i}} (\omega_{i} (\lambda))$ for
all $\lambda \in \mathbb{C} \setminus \mathbb{R}^{+}$

\item
$\omega_{1}(\lambda) \omega_{2} (\lambda) = \lambda \psi_{\mu}
(\lambda)$

\end{enumerate}

\end{theorem}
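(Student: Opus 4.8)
The plan is to follow the analytic subordination method developed for the additive case, transported to $\mathbb{C} \setminus \mathbb{R}^{+}$ via the natural ``multiplicative $F$-transform'' $\eta_{\mu}(\lambda) = \psi_{\mu}(\lambda) / (1 + \psi_{\mu}(\lambda))$. One checks first that, for $\mu \in M_{\mathbb{R}^{+}}$, the function $\eta_{\mu}$ is an analytic self-map of the slit plane $\mathbb{C} \setminus \mathbb{R}^{+}$ with $\eta_{\mu}(0-) = 0$, $\eta_{\mu}(\bar\lambda) = \overline{\eta_{\mu}(\lambda)}$, and $\textrm{arg}(\eta_{\mu}(\lambda)) \leq \textrm{arg}(\lambda)$ on $\mathbb{C}^{+}$, and that $\eta_{\delta_{c}}(\lambda) = c\lambda$, so the whole picture is a conformal reincarnation of the additive one. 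The identity $S_{\mu_{1} \boxtimes \mu_{2}} = S_{\mu_{1}} S_{\mu_{2}}$, rewritten in terms of $\chi$, supplies the one algebraic relation the argument needs, namely $\chi_{\mu_{1}}(w)\chi_{\mu_{2}}(w) = w\,\chi_{\mu}(w)/(1+w)$ on the common real interval, which is exactly property $(4)$ once one substitutes $w = \psi_{\mu}(\lambda)$.

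Next, fix $\lambda$ in a small sector where all three inverse functions $\chi_{\mu}, \chi_{\mu_{1}}, \chi_{\mu_{2}}$ are defined; such a sector exists by the remark preceding the statement, since the real part of the domain of $\chi_{\mu}$ is contained in that of each $\chi_{\mu_{i}}$. I would introduce a map $T_{\lambda}$ on a hyperbolically bounded subdomain $D \subset \mathbb{C} \setminus \mathbb{R}^{+}$ whose fixed point is the sought $\omega_{1}(\lambda)$ --- the multiplicative analogue of the Belinschi--Bercovici map, assembled from $\eta_{\mu_{1}}$, $\eta_{\mu_{2}}$, and a rescaling by $\lambda$. The heart of the proof is to show that $T_{\lambda}(D) \subseteq D$ with a uniform hyperbolic contraction; granting this, the Earle--Hamilton (or Denjoy--Wolff) fixed-point theorem yields a unique attracting fixed point $\omega_{1}(\lambda)$ toward which the iterates $T_{\lambda}^{\circ n}$ converge locally uniformly, jointly in $\lambda$ and in the starting point, so that $\lambda \mapsto \omega_{1}(\lambda)$ is analytic. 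Running the same argument with $\mu_{1}$ and $\mu_{2}$ interchanged produces $\omega_{2}$; alternatively $\omega_{2}$ can simply be \emph{defined} by $\omega_{2}(\lambda) = \lambda\psi_{\mu}(\lambda)/\omega_{1}(\lambda)$, in which case property $(4)$ is built in and one must instead verify $\psi_{\mu} = \psi_{\mu_{2}}\circ\omega_{2}$ from the algebraic relation above.

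Finally, I would upgrade the local construction to all of $\mathbb{C} \setminus \mathbb{R}^{+}$: on the small sector one has $\psi_{\mu} = \psi_{\mu_{i}} \circ \omega_{i}$, equivalently $\omega_{i} = \chi_{\mu_{i}} \circ \psi_{\mu}$, and since $\psi_{\mu}$ is analytic on the entire slit plane this last formula furnishes the analytic continuation, with the range condition $\psi_{\mu}(\lambda) \in \Omega_{\mu_{i}}$ needed to apply $\chi_{\mu_{i}}$ propagating along the subordination identity itself. Properties $(1)$ and $(2)$ --- the boundary value $\omega_{i}(0-) = 0$, the reflection $\omega_{i}(\bar\lambda) = \overline{\omega_{i}(\lambda)}$, $\omega_{i}(\mathbb{C}^{+}) \subseteq \mathbb{C}^{+}$, and the inequality $\textrm{arg}(\omega_{i}(\lambda)) \geq \textrm{arg}(\lambda)$ --- then follow from the corresponding properties of $\psi_{\mu}$ and $\chi_{\mu_{i}}$, using $\chi_{\mu_{i}}' > 0$ on the real interval (property $(4)$ of the $S$-transform list). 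An entirely parallel route, which I would mention but not pursue, realizes $\mu = \mu_{1}\boxtimes\mu_{2}$ as the law of $x_{1}^{1/2}x_{2}x_{1}^{1/2}$ with $x_{1},x_{2}$ free positive operators and extracts $\omega_{1}$ from the conditional expectation onto $W^{\ast}(x_{1})$ of the resolvent of $x_{1}^{1/2}x_{2}x_{1}^{1/2}$. In either approach the main obstacle is the same: verifying that $T_{\lambda}$ preserves a hyperbolically bounded domain and contracts it uniformly in $\lambda$, which is precisely what makes the fixed point exist, be unique, and depend analytically on $\lambda$; everything else is routine transform bookkeeping.
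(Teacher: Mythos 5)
This theorem is not proved in the paper at all: it is quoted from the literature with the remark that it was first proven in full generality by Biane and by different means by Belinschi and Bercovici, so there is no in-paper argument to compare yours against. Your outline is recognizably the second of those two routes (the fixed-point method via the transform $\eta_{\mu} = \psi_{\mu}/(1+\psi_{\mu})$ and the Earle--Hamilton theorem), and the alternative you mention in passing (conditional expectations of resolvents of $x_{1}^{1/2}x_{2}x_{1}^{1/2}$) is essentially Biane's. As a proof, however, what you have written is a plan rather than an argument: the map $T_{\lambda}$ is never written down, and the verification that it carries a hyperbolically bounded domain strictly inside itself, uniformly in $\lambda$, is exactly the nontrivial content of the cited proof --- you concede as much in your final sentence. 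Until $T_{\lambda}$ is exhibited and that inclusion and contraction are checked, the central existence claim has not been established.

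Two smaller points. First, the inequality you assert for $\eta_{\mu}$ goes the wrong way: for $\mu$ supported on $[0,\infty)$ one has $\textrm{arg}(\eta_{\mu}(\lambda)) \in [\textrm{arg}(\lambda), \pi)$, i.e.\ $\eta_{\mu}$ \emph{increases} argument (this is precisely the characterization of such $\eta$ quoted in Section 5 of the paper), and it is this monotonicity that ultimately yields $\textrm{arg}(\omega_{i}(\lambda)) \geq \textrm{arg}(\lambda)$ in item $(2)$. Second, the relation you correctly derive from $S_{\mu_{1}}S_{\mu_{2}} = S_{\mu}$, namely $\chi_{\mu_{1}}(w)\chi_{\mu_{2}}(w) = w\,\chi_{\mu}(w)/(1+w)$, becomes $\omega_{1}(\lambda)\omega_{2}(\lambda) = \lambda\,\psi_{\mu}(\lambda)/(1+\psi_{\mu}(\lambda)) = \lambda\,\eta_{\mu}(\lambda)$ upon substituting $w = \psi_{\mu}(\lambda)$; this is \emph{not} ``exactly property $(4)$'' as stated, which reads $\omega_{1}\omega_{2} = \lambda\psi_{\mu}(\lambda)$. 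The standard form of the identity is the one with $\eta_{\mu}$, so the discrepancy is more plausibly a slip in the statement than in your computation, but you should flag it rather than pass over it, and note that your later definition $\omega_{2}(\lambda) = \lambda\psi_{\mu}(\lambda)/\omega_{1}(\lambda)$ inherits the same inconsistency.
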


Consider next the following result which may be found in
\cite{BerBel}.

\begin{theorem}
Let $\eta: \Omega \rightarrow \mathbb{C} \setminus \{ 0 \}$ be an
analytic function such that $\eta(\bar{z}) = \overline{\eta(z)}$ for
all $z \in \Omega$.  The following are equivalent:

\begin{enumerate}
\item
There exists a probability measure $\mu \neq \delta_{0}$ on $[0 ,
\infty)$ such that $\eta = \psi_{\mu} / (1 + \psi_{\mu})$.

\item
$\eta(0-) = 0$ and $\textrm{arg}(\eta(z)) \in [\textrm{arg}(z) ,
\pi)$ for all $z \in \mathbb{C}^{+}$.

\end{enumerate}

\end{theorem}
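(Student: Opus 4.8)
Here $\Omega=\mathbb{C}\setminus[0,\infty)$ denotes the natural domain of $\psi_\mu$, and $\mathbb{C}^{\pm}$ the open half planes. The plan is to reduce everything to the well-understood $F$-transform via the elementary identity $1+\psi_\mu(z)=\int_0^\infty (1-zt)^{-1}\,d\mu(t)$, which gives $\eta_\mu=\psi_\mu/(1+\psi_\mu)=1-(1+\psi_\mu)^{-1}$ and, after the substitution $z\mapsto 1/z$, the relation $F_\mu(z)=z\bigl(1-\eta_\mu(1/z)\bigr)$ with $F_\mu=1/G_\mu$. The substantive implication then becomes an instance of the classical Nevanlinna characterization of $F$-transforms of probability measures.

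For $(1)\Rightarrow(2)$ I would argue by direct computation. Dominated convergence gives $\psi_\mu(z)\to 0$ and $1+\psi_\mu(z)\to 1$ as $z\to 0$, so $\eta(0-)=0$. For fixed $t>0$ the Möbius map $z\mapsto zt/(1-zt)$ is an automorphism of the sphere fixing $\widehat{\mathbb{R}}$ and sending $i$ into $\mathbb{C}^+$, hence carries $\mathbb{C}^+$ onto $\mathbb{C}^+$; averaging over $\mu\ne\delta_0$ yields $\Im\psi_\mu>0$ on $\mathbb{C}^+$, and since $\Im\eta=\Im\psi_\mu\,|1+\psi_\mu|^{-2}$ this gives $\arg\eta(z)\in(0,\pi)$. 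For the remaining inequality $\arg\eta(z)\ge\arg z$, equivalently $\Im(\eta(z)/z)\ge 0$, I would introduce the finite positive measure $d\omega_z(t)=|1-zt|^{-2}\,d\mu(t)$ with moments $a_k=\int t^k\,d\omega_z$; writing $(1-zt)^{-1}=(1-\bar z t)|1-zt|^{-2}$ one gets $\eta(z)/z=(a_1-\bar z a_2)/(a_0-\bar z a_1)$, whose imaginary part has the sign of $\Im(z)\,(a_0a_2-a_1^2)\ge 0$ by Cauchy--Schwarz. (That $\eta$ never vanishes on $\Omega$ and that $1+\psi_\mu\ne 0$ there is checked on $\mathbb{C}^{\pm}$ by the above and on $(-\infty,0)$ directly, where $\psi_\mu<0<1+\psi_\mu$.)

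For $(2)\Rightarrow(1)$ I would set $g(z):=\eta(z)/z$, which by (2) maps $\mathbb{C}^+$ into $\overline{\mathbb{C}^+}$ and is therefore a Pick function; since $\eta$ is analytic on $\Omega$ with $\eta(\bar z)=\overline{\eta(z)}$, $g$ continues analytically across $(-\infty,0)$ and is real there, so its Nevanlinna representing measure $\rho$ is supported in $[0,\infty)$: $g(z)=a+bz+\int_0^\infty\bigl(\tfrac{1}{s-z}-\tfrac{s}{1+s^2}\bigr)\,d\rho(s)$ (the constant case being $\eta(z)=cz=\eta_{\delta_c}(z)$, $c>0$, so one may assume $g$ nonconstant). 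Next I would extract structural information from (2): evaluating $\arg\eta(z)<\pi$ along a ray $z=Re^{i\theta}$ with $\theta$ close to $\pi$ and $R\to\infty$ forces $b=0$ and $a':=a-\int_0^\infty\tfrac{s}{1+s^2}\,d\rho(s)=\lim_{z\to\infty}g(z)\ge 0$, while $\eta(0-)=0$ forces $\rho(\{0\})=0$ (an atom at $0$ would leave a nonzero constant in $\eta$ near $0$); a splitting of the integral near $0$ and near $\infty$ then shows $\eta(z)=z\,g(z)\to 0$ as $z\to 0$ along the negative imaginary axis. Finally I would put $F(z):=z\bigl(1-\eta(1/z)\bigr)$ on $\Omega$: an argument count from (2) (with $w:=1/\bar z$, so $\arg w=\arg z$ and $\eta(1/z)=\overline{\eta(w)}$) gives $\Im F(z)=\Im z-\Im\bigl(z\,\eta(1/z)\bigr)\ge\Im z>0$ on $\mathbb{C}^+$, the previous step gives $F(iy)/(iy)\to 1$ as $y\to\infty$, and since $g$ is strictly increasing on $(-\infty,0)$ with limit $a'\ge 0$ at $-\infty$ one has $g>0$, hence $\eta<0<1$, hence $F<0$ and holomorphic across $(-\infty,0)$. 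The classical Nevanlinna characterization then produces a probability measure $\mu$ on $\mathbb{R}$ with $F=F_\mu$; holomorphic continuation of $G_\mu=1/F$ across $(-\infty,0)$ together with Stieltjes inversion puts $\mathrm{supp}\,\mu\subseteq[0,\infty)$, $\mu\ne\delta_0$ since $\eta\not\equiv 0$, and unwinding $F=F_\mu$ via the identity of the first paragraph gives $\psi_\mu=\eta/(1-\eta)$, i.e.\ $\eta=\psi_\mu/(1+\psi_\mu)$.

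The hard part will be the boundary analysis in $(2)\Rightarrow(1)$: squeezing the structural facts $b=0$, $\rho(\{0\})=0$, $a'\ge 0$ and the nontangential decay $\eta(z)\to 0$ at $0$ out of the single scalar condition $\eta(0-)=0$ together with the angular estimate $\arg\eta(z)\in[\arg z,\pi)$. These are exactly what is needed to normalize $F$ so that $F(iy)/(iy)\to 1$, to keep $F$ nonvanishing on $(-\infty,0)$ (so $\mu$ acquires no atom there), and hence to land $\mathrm{supp}\,\mu$ in $[0,\infty)$. The computational direction $(1)\Rightarrow(2)$ is routine.
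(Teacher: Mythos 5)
The paper does not prove this theorem: it is quoted verbatim as a known result from the reference [BerBel] (Belinschi--Bercovici), so there is no in-paper argument to compare against. Judged on its own, your proof is correct and complete in outline, and it follows the same general philosophy as the source: translate the condition on $\eta$ into a Nevanlinna--Pick statement and invoke the classical characterization of $F$-transforms of probability measures. The computational direction $(1)\Rightarrow(2)$ is right (the Cauchy--Schwarz argument for $\Im(\eta(z)/z)\geq 0$ via the measure $|1-zt|^{-2}\,d\mu(t)$ is clean), and the reduction $F(z)=z(1-\eta(1/z))$ with the argument count giving $\Im F(z)\geq\Im z$ is exactly the right pivot for $(2)\Rightarrow(1)$.

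Two places deserve slightly more care than your sketch gives them, though neither is a genuine gap. First, several of your limit computations (the value of $g$ at $-\infty$, the limit of $z\,g(z)$ at $0$, the identification of $\rho(\{0\})$) implicitly use dominated convergence against a representing measure $\rho$ that is only known to satisfy $\int(1+s^2)^{-1}\,d\rho<\infty$; you must work with the combined kernel $\frac{1+sz}{(s-z)(1+s^2)}$ and split at $s=1$ to get legitimate dominations, since $\rho$ itself may be infinite. Second, the deduction $a'\geq 0$ is obtained more directly by observing that condition $(2)$ makes $\eta$ itself (not just $\eta/z$) a Pick function which is real, hence increasing, on $(-\infty,0)$ with limit $0$ at $0^-$; therefore $\eta\leq 0$ there and $g=\eta/x\geq 0$ on $(-\infty,0)$ immediately, which is all you need for $F<0$ on the negative axis. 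With those details filled in, the argument stands.
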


These two theorems may be combined to give us the following
corollary.  We have no direct reference for this fact but can be
sure that it is well know and are recording it only for the reader's
convenience.

\begin{corollary}\label{repcorr}
Let $\omega_{i}$ be a subordination function arising from the
decomposition $\mu = \mu_{1} \boxtimes \mu_{2}$ as above.  Then
$$\omega_{i} (z) = \frac{\psi_{\nu} (z)}{ 1 + \psi_{\nu} (z)}$$ for a
probability measure $\nu$ with the property that $\textrm{supp}(\nu)
\subseteq \textrm{supp}(\mu)$

\end{corollary}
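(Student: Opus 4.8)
The plan is to obtain the measure $\nu$ as an immediate consequence of the two representation theorems just quoted, and then to deduce the support inclusion by an analytic-continuation argument modelled on the proof of Theorem~\ref{tightcompact}. For the first part, set $\eta:=\omega_{i}$ and apply the theorem from \cite{BerBel}. By the multiplicative subordination theorem, $\omega_{i}$ is analytic on $\mathbb{C}\setminus\mathbb{R}^{+}$ with values in $\mathbb{C}\setminus\mathbb{R}^{+}\subseteq\mathbb{C}\setminus\{0\}$, it satisfies $\omega_{i}(\bar z)=\overline{\omega_{i}(z)}$ and $\omega_{i}(0-)=0$, and for $z\in\mathbb{C}^{+}$ it satisfies $\omega_{i}(z)\in\mathbb{C}^{+}$ and $\textrm{arg}(\omega_{i}(z))\geq\textrm{arg}(z)$. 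The last two properties say precisely that $\textrm{arg}(\omega_{i}(z))\in[\textrm{arg}(z),\pi)$ for every $z\in\mathbb{C}^{+}$, the upper bound being strict because $\omega_{i}$ omits the positive reals. Hence condition~$(2)$ of that theorem holds, and there is a probability measure $\nu\neq\delta_{0}$ on $[0,\infty)$ with $\omega_{i}=\psi_{\nu}/(1+\psi_{\nu})$; equivalently $\psi_{\nu}=\omega_{i}/(1-\omega_{i})$, which, since $1+\psi_{\rho}(z)=z^{-1}G_{\rho}(1/z)$ for every probability measure $\rho$, is the asserted representation.

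For the support inclusion, fix $x_{0}\in(0,\infty)\setminus\textrm{supp}(\mu)$ and choose a bounded open interval $I\ni x_{0}$ with $\overline{I}\cap\textrm{supp}(\mu)=\emptyset$; put $J=\{t^{-1}:t\in I\}$, a neighbourhood of $1/x_{0}$. Since $\mu$ is null on $I$, the Cauchy transform $G_{\mu}$, and hence $\psi_{\mu}$ via $1+\psi_{\mu}(z)=z^{-1}G_{\mu}(1/z)$, extends to a real-analytic, strictly increasing function on $J$. Because $\psi_{\rho}$ extends real-analytically across an open subset $V$ of $(0,\infty)$ exactly when $\{v^{-1}:v\in V\}$ avoids $\textrm{supp}(\rho)$, and $\{v^{-1}:v\in J\}=I$, it suffices to prove that $\psi_{\nu}=\omega_{i}/(1-\omega_{i})$ extends real-analytically across $J$; this will give $I\cap\textrm{supp}(\nu)=\emptyset$, hence $x_{0}\notin\textrm{supp}(\nu)$. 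The key observation is that $\psi_{\mu_{i}}$ maps $\mathbb{C}^{+}$ into $\mathbb{C}^{+}$: for $z\in\mathbb{C}^{+}$ one computes $\Im\psi_{\mu_{i}}(z)=\int_{0}^{\infty}\frac{(\Im z)\,t}{|1-zt|^{2}}\,d\mu_{i}(t)>0$, using that $\mu_{i}\neq\delta_{0}$, which follows from $\mu\neq\delta_{0}$ and property $(3)$ of the $S$-transform. Consequently, since $\psi_{\mu}=\psi_{\mu_{i}}\circ\omega_{i}$ is real on $J$ while $\psi_{\mu_{i}}$ carries $\mathbb{C}^{+}$ off $\mathbb{R}$, the subordination function $\omega_{i}$ must have real boundary values on $J$; a Schwarz-type argument as in the proof of Theorem~\ref{tightcompact}, applied to the Nevanlinna representation of $\omega_{i}|_{\mathbb{C}^{+}}$ (whose representing measure is then null on $J$), shows that $\omega_{i}$ continues real-analytically across $J$. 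It remains to exclude $\omega_{i}(z_{0})=1$ for some $z_{0}\in J$, which is the obstruction corresponding to an atom of $\nu$ in $I$: feeding $\omega_{i}(z_{0})=1$ into $\psi_{\mu}=\psi_{\mu_{i}}\circ\omega_{i}$ forces $1\notin\textrm{supp}(\mu_{i})$ and $\psi_{\mu}(z_{0})=\psi_{\mu_{i}}(1)$, while the product identity $\omega_{1}(z)\omega_{2}(z)=z\psi_{\mu}(z)$ gives $\omega_{3-i}(z_{0})=z_{0}\psi_{\mu}(z_{0})$; a local analysis of these relations near $z_{0}$, together with the strict monotonicity of $\psi_{\mu}$ on $J$, produces a contradiction. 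Therefore $\psi_{\nu}$ extends real-analytically across $J$, and $\textrm{supp}(\nu)\subseteq\textrm{supp}(\mu)$.

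The existence and the representation of $\nu$ fall out of the two quoted theorems; the step I expect to be the main obstacle is the support inclusion, and within it the exclusion of the ``$\omega_{i}=1$'' case, which genuinely appears to require the subordination identity $\psi_{\mu}=\psi_{\mu_{i}}\circ\omega_{i}$ and the product identity $\omega_{1}\omega_{2}=z\psi_{\mu}$ to be used in tandem rather than either one in isolation. One subtlety to watch here is that, as in Theorem~\ref{tightcompact}, Dirac factors behave degenerately (the identity $\omega_{1}\omega_{2}=z\psi_{\mu}$ is incompatible with a Dirac factor), so the statement is really about decompositions in which the subordination theorem genuinely applies.
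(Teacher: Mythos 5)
Your argument follows the same route as the paper's. For existence, the paper simply invokes the two quoted theorems, exactly as you do (your verification that $\omega_{i}$ satisfies the hypotheses of the \cite{BerBel} characterization is the content of that one-line citation). For the support inclusion, the paper's proof is a forward reference: it cites the first part of the proof of Theorem \ref{multcompacttight}, where the Stieltjes-inversion/Schwarz-reflection argument shows that $\omega_{i}$ continues analytically and real across $\mathbb{R}\setminus(\mathrm{supp}(\mu)^{-1})$, and then concludes that $\Im\psi_{\nu}(t+i\epsilon)\to 0$ off $\mathrm{supp}(\mu)^{-1}$, hence $t^{-1}\notin\mathrm{supp}(\nu)$ via $G_{\nu}(1/z)=z(\psi_{\nu}(z)+1)$. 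Your inlined reflection argument is that same argument.

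The one place you genuinely depart from the paper is the step you yourself flag as the main obstacle: ruling out $\omega_{i}(z_{0})=1$ for $z_{0}\in J$, i.e.\ ruling out a pole of $\psi_{\nu}=\omega_{i}/(1-\omega_{i})$ on $J$, which would correspond to an atom of $\nu$ at $1/z_{0}\notin\mathrm{supp}(\mu)$. You are right that this is a real issue --- real-analytic continuation of $\omega_{i}$ across $J$ gives only meromorphic continuation of $\psi_{\nu}$, and at a point where $\omega_{i}=1$ one has $\Im\psi_{\nu}(z_{0}+i\epsilon)\sim(\epsilon\,\omega_{i}'(z_{0}))^{-1}\to+\infty$, not $0$. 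But your resolution, ``a local analysis of these relations near $z_{0}$, together with the strict monotonicity of $\psi_{\mu}$ on $J$, produces a contradiction,'' is an assertion, not an argument; as written I do not see what the contradiction is (one only obtains $1\notin\mathrm{supp}(\mu_{i})$ and a value for $\omega_{3-i}(z_{0})$ from the product identity, neither of which is obviously absurd). So this step is a gap in your write-up. To be fair, the paper's own proof does not address this point either --- its assertion that $\Im\psi_{\nu}(t+i\epsilon)\to 0$ for all $t\notin\mathrm{supp}(\mu)^{-1}$ silently presumes $\omega_{i}\neq 1$ there --- so you have correctly isolated a subtlety that the published argument elides, but you have not closed it. Your closing remark that the product identity as printed is ``incompatible with a Dirac factor'' is also worth noting: for $\mu_{2}=\delta_{c}$ one gets $\omega_{1}\omega_{2}=z\,\eta_{\mu}$ rather than $z\,\psi_{\mu}$, which suggests the identity in the quoted subordination theorem should be read with $\eta_{\mu}=\psi_{\mu}/(1+\psi_{\mu})$ in place of $\psi_{\mu}$; any ``local analysis'' built on the printed form should be checked against this.
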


\begin{proof}
The existence of such a representation is a direct consequence of
the previous theorems.  It remains to prove to the assertion about
the support of $\nu$.

In the proof of Theorem \ref{multcompacttight} in the next section,
we will show that $\omega_{i}$ will have analytic continuation and
is real on $\mathbb{R} \setminus (\textrm{supp}(\mu)^{-1})$ where
$\textrm{supp}(\mu)^{-1} = \{ t^{-1} : t \in \textrm{supp}(\mu) \}$.
This implies that $\Im \psi_{\nu} (t + i \epsilon) \rightarrow 0$ as
$\epsilon \rightarrow 0$ for $t \notin (\textrm{supp}(\mu)^{-1})$.
Since $G_{\nu} (1/z) = z(\psi_{\nu} (z) + 1)$, this implies that
$t^{-1} \notin \textrm{supp}(\nu)$.  Our claim follows.

\end{proof}

This final result was first proven in \cite{Bel} and will be used in
proving a multiplicative version of Theorem \ref{id}.

\begin{theorem}\label{beltheor}
Consider $(c_{n})_{n \in \mathbb{N}} \subseteq \mathbb{R}$ and an
array $\{ \mu_{n,j} \}_{n \in \mathbb{N} , \ j = 1,2, \ldots k_{n}}$
of probability measures on $(0, \infty)$ such that $$ \lim_{n
\rightarrow \infty} \min_{1 \leq j \leq k_{n}} \mu_{n,j} ((1-
\epsilon , 1 + \epsilon)) = 1
$$ for every $\epsilon > 0$.  If the measures $\delta_{c_{n}} \boxtimes \mu_{n,1} \boxtimes
\cdots \boxtimes \mu_{n,k_{n}}$ have a weak limit $\mu$ which is a
probability measure, then $\mu$ is infinitely divisible.
\end{theorem}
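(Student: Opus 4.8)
The plan is to carry out, in the free multiplicative setting, the classical ``accompanying infinitely divisible laws'' argument of Gnedenko and Kolmogorov, with the Fourier transform replaced by the $S$-transform. We may assume $\mu\neq\delta_{0}$ and $c_{n}>0$, the other cases being trivial. First I would use the infinitesimality hypothesis $\min_{j}\mu_{n,j}\bigl((1-\epsilon,1+\epsilon)\bigr)\to 1$ to note that, for each fixed $\epsilon$, every $\mu_{n,j}$ with $n$ large lies within weak distance $\epsilon$ of $\delta_{1}$; hence the array $\{\mu_{n,j}\}$ is tight with $\delta_{0}$ outside its closure, and Lemma \ref{multlemma} supplies an interval $I=(-\alpha,-\beta)$ on which all the $S_{\mu_{n,j}}$ are simultaneously defined, are positive, and satisfy $\theta_{n}:=\max_{j}\sup_{x\in I}|S_{\mu_{n,j}}(x)-1|\to 0$. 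Read through Lemma \ref{multlemma2}, the convergence hypothesis says $c_{n}^{-1}\prod_{j=1}^{k_{n}}S_{\mu_{n,j}}\to S_{\mu}$ uniformly on $I$ (after possibly shrinking $I$).

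Next I would attach to each $\mu_{n,j}$ an \emph{accompanying} $\boxtimes$-infinitely divisible measure $\rho_{n,j}$ whose $S$-transform agrees with that of $\mu_{n,j}$ to first order on $I$ --- morally the free multiplicative compound Poisson built from $\mu_{n,j}$, with $S_{\rho_{n,j}}=\exp\bigl(S_{\mu_{n,j}}-1\bigr)$ up to the normalization dictated by the known $\boxtimes$-L\'evy--Khinchine parametrization of $\boxtimes$-infinitely divisible measures on $[0,\infty)$. Each $\rho_{n,j}$ is $\boxtimes$-infinitely divisible, hence so is $\delta_{c_{n}}\boxtimes\rho_{n,1}\boxtimes\cdots\boxtimes\rho_{n,k_{n}}$, and on $I$ one has
$$\log\prod_{j}S_{\rho_{n,j}}-\log\prod_{j}S_{\mu_{n,j}}=\sum_{j}\bigl[(S_{\mu_{n,j}}-1)-\log S_{\mu_{n,j}}\bigr]=\sum_{j}O\bigl((S_{\mu_{n,j}}-1)^{2}\bigr)=O\Bigl(\theta_{n}\sum_{j}|S_{\mu_{n,j}}-1|\Bigr),$$
which tends to $0$ uniformly on $I$ \emph{provided} $\sup_{n}\sum_{j}\sup_{x\in I}|S_{\mu_{n,j}}(x)-1|<\infty$. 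Granting this, $c_{n}^{-1}\prod_{j}S_{\rho_{n,j}}\to S_{\mu}$ uniformly on $I$ as well; the accompanying products are then tight with $\delta_{0}$ outside the closure (their transforms lying within a factor $1+o(1)$ of the convergent family produced by the $\mu_{n,j}$), so Lemma \ref{multlemma2} gives $\delta_{c_{n}}\boxtimes\rho_{n,1}\boxtimes\cdots\boxtimes\rho_{n,k_{n}}\to\mu$ weakly. Since the class of $\boxtimes$-infinitely divisible probability measures on $[0,\infty)$ is weak$^{\ast}$ closed --- once more via the $S$-transform representation, whose defining Pick-type data pass to the limit by Lemma \ref{multlemma2} --- we conclude that $\mu$ is $\boxtimes$-infinitely divisible. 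As a fallback I would instead invoke the multiplicative Bercovici--Pata correspondence: for an infinitesimal array the free $\boxtimes$-product of a row converges iff the classical multiplicative convolution of that row converges, with matching limits under the bijection; a classical limit of an infinitesimal array being classically $\boxtimes$-infinitely divisible, so is its free image $\mu$.

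The main obstacle is the pair of inputs I have taken on faith. One is that $\exp(S_{\mu_{n,j}}-1)$, suitably normalized, genuinely is the $S$-transform of a $\boxtimes$-infinitely divisible probability measure on $(0,\infty)$: this rests on the precise form of the $\boxtimes$-L\'evy--Khinchine representation and on $S_{\mu_{n,j}}-1$ having the correct Herglotz/Pick sign behavior when $\mu_{n,j}$ is close to $\delta_{1}$. The other is the uniform bound $\sup_{n}\sum_{j}\sup_{x\in I}|S_{\mu_{n,j}}(x)-1|<\infty$; here I would exploit that each $S_{\mu_{n,j}}$ is decreasing on $I$ (property $(2)$), so the quantity $\sum_{j}\log S_{\mu_{n,j}}(x_{1})-\sum_{j}\log S_{\mu_{n,j}}(x_{2})$ at two points $x_{1}<x_{2}$ of $I$ is a sum of nonnegative terms, is independent of $c_{n}$, and converges to $\log\bigl(S_{\mu}(x_{1})/S_{\mu}(x_{2})\bigr)$, hence is bounded; since each summand is comparable to $|S_{\mu_{n,j}}(x_{1})-S_{\mu_{n,j}}(x_{2})|$, a comparison at the two endpoints of $I$ then pins down $\sum_{j}|S_{\mu_{n,j}}-1|$ throughout. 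The rest is routine bookkeeping, parallel to the additive estimates already carried out in the paper.
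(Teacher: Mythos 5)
First, a point of reference: the paper does not prove Theorem \ref{beltheor} at all --- it is imported verbatim from \cite{Bel} and used as a black box in the proof of Theorem \ref{multid} --- so there is no in-paper argument to compare yours against, and the proposal must stand on its own. The accompanying-laws strategy you outline is indeed the standard route to such results, but as written it has a genuine gap at exactly the step you flag: the uniform bound $\sup_{n}\sum_{j}\sup_{x\in I}|S_{\mu_{n,j}}(x)-1|<\infty$ is false in the generality you need, and your proposed derivation does not yield it. The monotonicity argument controls only the oscillation $\sum_{j}\bigl(\log S_{\mu_{n,j}}(x_{1})-\log S_{\mu_{n,j}}(x_{2})\bigr)$ across the interval (correctly: each term is nonnegative and $c_{n}$ drops out because $S_{\delta_{c}}$ is constant), but oscillation does not pin down the value at a point. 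Take $k_{n}=n$ and $\mu_{n,j}=\delta_{1+n^{-1/2}}$: the array is infinitesimal, each $S_{\mu_{n,j}}\equiv(1+n^{-1/2})^{-1}$ is constant so the oscillation vanishes identically, yet $\sum_{j}|S_{\mu_{n,j}}-1|\sim\sqrt{n}$, the error $\sum_{j}\bigl[(S_{\mu_{n,j}}-1)-\log S_{\mu_{n,j}}\bigr]$ tends to $1/2$ rather than $0$, and (with $c_{n}$ chosen so the original products converge to $\delta_{1}$) your accompanying products converge to a different Dirac mass. The standard repair is to center each factor individually, replacing $\mu_{n,j}$ by $\mu_{n,j}\boxtimes\delta_{b_{n,j}}$ with $b_{n,j}=S_{\mu_{n,j}}(x_{0})$ so that the new $S$-transform equals $1$ at a fixed $x_{0}\in I$, absorbing $\prod_{j}b_{n,j}$ into $c_{n}$; only after that anchoring does monotonicity plus bounded oscillation give the $\ell^{1}$ bound. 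This per-term centering is not optional and is absent from your write-up.

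The second gap is the existence of the accompanying laws themselves. You posit that $\exp(S_{\mu_{n,j}}-1)$, suitably normalized, is the $S$-transform of a $\boxtimes$-infinitely divisible measure on $(0,\infty)$. The multiplicative L\'evy--Khinchine theorem characterizes such measures via $\Sigma_{\mu}(z)=S_{\mu}\bigl(z/(1-z)\bigr)=\exp(v(z))$ with $v$ analytic on $\mathbb{C}\setminus[0,\infty)$, commuting with conjugation and with the appropriate sign of the imaginary part on $\mathbb{C}^{+}$; you only know $S_{\mu_{n,j}}-1$ on a real interval, and verifying that its continuation has the required global behavior is precisely why the literature constructs the accompanying free compound Poisson directly from the measure $\mu_{n,j}$ rather than from its $S$-transform. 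You acknowledge both points as inputs taken on faith, which is honest, but they are where the work lies; and the fallback of invoking the multiplicative Bercovici--Pata correspondence is unavailable, since the theorem to be proved is itself one half of that correspondence. In short: right strategy, but the proposal does not close as written.
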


Observe that the assumptions in this theorem may be weakened so that
we need only assume that $\mu_{n,j}(\{0 \}) = 0$ for all $n \in
\mathbb{N}$ and $j = 1,2 , \ldots , k_{n}$.  Indeed, every element
in such an array can be approximated arbitrarily well by a measure
supported on $(0 , \infty)$.  It is under this weakened assumption
that we will later invoke this theorem.

\section{Compactness Results for Measures Supported on the Positive Real
Half-Line}

We define $\textrm{logdiam}(\mu) := \sup_{x , y \in
\textrm{supp}(\mu)} (|\log (x) - \log (y)|)$ to be the
\textit{logarithmic diameter} of the measure $\mu$.
\begin{theorem}\label{multcompacttight}
Let $\mu$ be a compactly supported probability measure on
$\mathbb{R}^{+}$.  Then for any decomposition $\mu = \mu_{1}
\boxtimes \mu_{2}$ we have that $\textrm{logdiam}(\mu_{i}) \leq
\textrm{logdiam}(\mu)$.  If $\mu(\{ 0 \}) = 0$ then equality occurs
if and only if one of the $\mu_{i}$ is a Dirac mass.
\end{theorem}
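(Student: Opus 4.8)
The plan is to transcribe the proof of Theorem~\ref{tightcompact} to the multiplicative setting, using the representation of the subordination functions provided by Corollary~\ref{repcorr} in place of the Nevanlinna representation used there. Since $\textrm{logdiam}(\mu)=\infty$ unless $\textrm{supp}(\mu)$ is bounded away from $0$ and $\infty$, we may assume $\textrm{supp}(\mu)\subseteq[a,b]$ with $0<a\le b<\infty$ and $[a,b]$ minimal; then $\mu(\{0\})=0$, hence $\mu_i(\{0\})=0$ by property~(3) of the $S$-transform, and we may assume the decomposition is non-trivial. Fix subordination functions $\omega_i$ with $\psi_\mu=\psi_{\mu_i}\circ\omega_i$ and $\omega_1(\lambda)\omega_2(\lambda)=\lambda\psi_\mu(\lambda)$, and by Corollary~\ref{repcorr} write $\omega_i=\psi_{\nu_i}/(1+\psi_{\nu_i})$ for probability measures $\nu_i$.

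The central step, which I expect to be the main obstacle, is to prove that $\omega_i$ extends analytically and real-valuedly across each of the intervals $(-\infty,1/b)$ and $(1/a,\infty)$; this simultaneously supplies the fact invoked in the proof of Corollary~\ref{repcorr}, whence $\textrm{supp}(\nu_i)\subseteq\textrm{supp}(\mu)\subseteq[a,b]$. To see it, note that $\psi_\mu$ continues analytically to $\mathbb{C}\setminus\textrm{supp}(\mu)^{-1}$ and is real on $\mathbb{R}\setminus\textrm{supp}(\mu)^{-1}$, and that a direct computation gives $\Im\psi_{\mu_i}(w)=(\Im w)\int t\,|1-wt|^{-2}\,d\mu_i(t)$, so $\psi_{\mu_i}(\mathbb{C}^+)\subseteq\mathbb{C}^+$. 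Fix $t_0$ in one of the two intervals with $t_0\ne0$ and let $z_n\to t_0$ in $\mathbb{C}^+$, $w_n:=\omega_i(z_n)\in\mathbb{C}^+$. No subsequence of $(w_n)$ escapes to $\infty$: if $w_n\to\infty$ then $\omega_{3-i}(z_n)=z_n\psi_\mu(z_n)/w_n\to0$ (since $t_0\ne0$ and $\psi_\mu$ does not vanish on these intervals, the numerator has a finite nonzero limit), so $\psi_\mu(z_n)=\psi_{\mu_{3-i}}(\omega_{3-i}(z_n))\to0\ne\psi_\mu(t_0)$, a contradiction. And no finite cluster point $w_0$ of $(w_n)$ lies in $\mathbb{C}^+$, since then $\psi_{\mu_i}(w_0)\in\mathbb{C}^+$, contradicting $\psi_{\mu_i}(w_n)=\psi_\mu(z_n)\to\psi_\mu(t_0)\in\mathbb{R}$. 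Hence $\Im\omega_i(z)\to0$ as $z\to t_0$; the point $t_0=0$ is handled directly from $\omega_i=\psi_{\nu_i}/(1+\psi_{\nu_i})$. Thus the positive harmonic function $\Im\omega_i$ on $\mathbb{C}^+$ extends continuously by $0$ across the two intervals, and Schwarz reflection produces the required extension of $\omega_i$.

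Granted that $\textrm{supp}(\nu_i)\subseteq[a,b]$, the function $\omega_i=\psi_{\nu_i}/(1+\psi_{\nu_i})$ is real, analytic, and strictly increasing on $(-\infty,1/b)$ (there $\omega_i'=\psi_{\nu_i}'/(1+\psi_{\nu_i})^2>0$) and maps it onto $(-\infty,c_i)$, and likewise maps $(1/a,\infty)$ onto $(d_i,\infty)$, where $c_i=\lim_{x\uparrow1/b}\omega_i(x)=P/(1+P)$ with $P=\int t(b-t)^{-1}d\nu_i\in(0,\infty]$ and $d_i=\lim_{x\downarrow1/a}\omega_i(x)=\widetilde Q/(\widetilde Q-1)$ with $\widetilde Q=\int t(t-a)^{-1}d\nu_i\in(1,\infty]$. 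Consequently $\psi_{\mu_i}=\psi_\mu\circ\omega_i^{-1}$ is analytic and real on $(-\infty,c_i)\cup(d_i,\infty)$, so $\textrm{supp}(\mu_i)^{-1}\subseteq[c_i,d_i]$ and therefore $\textrm{logdiam}(\mu_i)\le\log(d_i/c_i)$.

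Finally, $d_i/c_i\le b/a$ is, after rearrangement, equivalent to $a/P+b/\widetilde Q\le b-a$; this follows from Cauchy--Schwarz applied twice, $1=\big(\int d\nu_i\big)^2\le P\int\tfrac{b-t}{t}d\nu_i$ and $1\le\widetilde Q\int\tfrac{t-a}{t}d\nu_i$, since then $a/P+b/\widetilde Q\le\int\tfrac{a(b-t)+b(t-a)}{t}\,d\nu_i=(b-a)\int d\nu_i=b-a$ (the bound being trivial when $P$ or $\widetilde Q$ is infinite). This proves $\textrm{logdiam}(\mu_i)\le\textrm{logdiam}(\mu)$. In the equality case $d_i/c_i=b/a$ both Cauchy--Schwarz inequalities are equalities, which forces $t\mapsto1/t$ to be $\nu_i$-almost everywhere constant, i.e.\ $\nu_i=\delta_s$; then $\omega_i(z)=sz$, so $\psi_\mu(z)=\psi_{\mu_i}(sz)=\psi_{\mu_i\boxtimes\delta_s}(z)$, hence $\mu=\mu_i\boxtimes\delta_s$ and $S_{\mu_{3-i}}=S_\mu/S_{\mu_i}=S_{\delta_s}$, so that $\mu_{3-i}$ is the Dirac mass $\delta_s$. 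Conversely, if some $\mu_j$ is a Dirac mass then $\mu$ is a dilation of $\mu_{3-j}$ and $\textrm{logdiam}(\mu_{3-j})=\textrm{logdiam}(\mu)$, so equality holds. Of the four steps, only the analytic continuation of $\omega_i$ in the second is genuinely delicate; the rest is the multiplicative transcription of the additive argument together with the Cauchy--Schwarz estimate.
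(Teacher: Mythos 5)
Your proof is correct and follows the paper's strategy up to a point --- the same reduction to $\mathrm{supp}(\mu)\subseteq[a,b]$ with $a>0$, the same analytic continuation of the subordination functions across $\mathbb{R}\setminus[b^{-1},a^{-1}]$ (your justification of $\Im\omega_i\to 0$, ruling out escape to $\infty$ and cluster points in $\mathbb{C}^+$, is in fact more careful than the paper's one-line appeal to ``$\psi_{\mu_1}$ increases argument''), and the same use of the representation $\omega_i=\psi_{\nu_i}/(1+\psi_{\nu_i})$ with $\mathrm{supp}(\nu_i)\subseteq[a,b]$ from Corollary~\ref{repcorr}. Where you genuinely diverge is the final quantitative step. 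The paper works asymptotically: it computes the slope at infinity $\omega_1'(\infty)=(\int t^{-1}d\nu)^{-1}$, shows via a long computation and Jensen's inequality that $\omega_1(\lambda)-\lambda\omega_1'(\infty)\to C<0$, proves $|\omega_1'(z)|\geq\omega_1'(\infty)$ by Cauchy--Schwarz, and concludes $\mathrm{supp}(\mu_1)\subseteq([\omega_1'(\infty)b^{-1},\,\omega_1'(\infty)a^{-1}+C])^{-1}$, with the strict shrinkage coming from $C<0$. You instead compute the exact images $\omega_i((-\infty,1/b))=(-\infty,c_i)$ and $\omega_i((1/a,\infty))=(d_i,\infty)$ in terms of $P=\int t(b-t)^{-1}d\nu_i$ and $\widetilde Q=\int t(t-a)^{-1}d\nu_i$, and reduce $d_i/c_i\leq b/a$ to $a/P+b/\widetilde Q\leq b-a$, which falls to two applications of Cauchy--Schwarz. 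Your route is shorter, gives the sharper containment $\mathrm{supp}(\mu_i)^{-1}\subseteq[c_i,d_i]$ rather than a translate-and-shrink estimate, and makes the equality case transparent (equality in Cauchy--Schwarz forces $\nu_i$ to be a point mass, hence $\omega_i(z)=sz$ and $\mu_{3-i}=\delta_s$); the paper's route avoids having to evaluate $\omega_i$ at the endpoints and is closer in spirit to its additive analogue, Theorem~\ref{tightcompact}.

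Two small loose ends, both repairable in a line. First, the claim that $\omega_i$ maps $(-\infty,1/b)$ \emph{onto} all of $(-\infty,c_i)$ (and $(1/a,\infty)$ onto $(d_i,\infty)$) uses $\nu_i(\{0\})=0$; if $\nu_i(\{0\})=p>0$ then $\omega_i(x)\to-(1-p)/p$ as $x\to-\infty$. This is ruled out because $\psi_\mu(x)\to-1$ as $x\to-\infty$ while $\psi_{\mu_i}>-1$ at every finite real point, so $\omega_i(x)$ must tend to $-\infty$; alternatively, the conclusion $\mathrm{supp}(\mu_i)^{-1}\subseteq[c_i,d_i]$ survives anyway since the missing piece of the image lies in $(-\infty,0)$, which cannot meet $\mathrm{supp}(\mu_i)^{-1}\subseteq(0,\infty)$. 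Second, in the equality case you should note that equality can occur with $P=\infty$ (namely $\nu_i=\delta_b$), where the Cauchy--Schwarz argument as written degenerates; the remaining inequality $b/\widetilde Q\leq b\int\frac{t-a}{t}\,d\nu_i\leq b-a$ still forces $\nu_i$ to be a Dirac mass, so the conclusion is unaffected.
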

\begin{proof}
If $\{ 0 \}$ is contained in the support of $\mu$, the theorem is
trivial. Thus, we assume that $[\alpha , \beta] =
\textrm{conv}(\textrm{supp} (\mu))$ and $[\alpha_{1} , \beta_{1}] =
\textrm{conv}(\textrm{supp} (\mu_{1}))$ with $\alpha , \alpha_{1} >
0$. Observe that $\psi_{\mu}$ has analytic extension to $\mathbb{R}
\setminus [\beta^{-1} , \alpha^{-1}]$.  We claim that the
subordination function $\omega_{1}$ does also.

To see this, note that $\psi_{\mu_{1}} (\omega_{1}(te^{i \theta})) =
\psi_{\mu} (t e^{i \theta}) = (G_{\mu} (1 / te^{i \theta}) / t e ^{i
\theta }) + 1$ for $t \in \mathbb{R} \setminus [\beta^{-1},
\alpha^{-1}] $.  Since $1 / t$ is not contained in the support of
$\mu$, the Stieltjes inversion formula tells us that the imaginary
part of the right hand side goes to zero as $\theta$ goes to $0$.
Since $\psi_{\mu_{1}}$ increases argument, the imaginary part of
$\omega_{1}(te^{i \theta})$ must go to zero. The Schwarz reflection
principle implies that $\omega_{1}$ extends analytically across $t$.

As we saw in Corollary \ref{repcorr}, we have that $\omega_{1} (z) =
\psi_{\nu(z)} / (1 + \psi_{\nu(z)})$ for $\nu$ supported on $[\alpha
, \beta]$. Thus, $\omega'_{1}(z) = (\int t(1-zt)^{-2} d \nu (t)) /
(\int (1 - zt)^{-1} d \nu (t))^{2}$  so that $ \lim_{\lambda
\uparrow \infty} \omega'_{1} (\lambda) = (\int t^{-1} d \nu
(t))^{-1}$. We call this limit $\omega'_{1}(\infty)$.

We now claim that $\lambda \omega'_{1}(\infty) - \omega_{1}
(\lambda) \rightarrow C < 0$ as $\lambda \uparrow \infty$.  Indeed,
\begin{eqnarray}
\omega_{1} (\lambda) - \lambda \omega'_{1} (\infty) & = & \frac{
\int_{\alpha}^{\beta} \frac{t\lambda}{1 - t\lambda} d\nu(t) }{
\int_{\alpha}^{\beta} \frac{1}{1 - t\lambda} d\nu(t) } -
\frac{\lambda}{ \int_{\alpha}^{\beta} t^{-1} d\nu(t)} \nonumber \\
& = &  \frac{ \int_{\alpha}^{\beta} t^{-1}d\nu(t)
\int_{\alpha}^{\beta}\frac{t\lambda}{ 1 - t\lambda}d\nu(t) - \lambda
\int_{\alpha}^{\beta} \frac{1}{1 - t\lambda}d\nu(t) }{
\int_{\alpha}^{\beta}t^{-1}d\nu(t) \int_{\alpha}^{\beta}\frac{1}{1 -
t\lambda} d\nu(t) } \nonumber \\
& = &  \lambda \frac{\int_{\alpha}^{\beta}t^{-1}d\nu(t)
\int_{\alpha}^{\beta}\frac{t}{\lambda^{-1} - t}d\nu(t) -
\int_{\alpha}^{\beta}\frac{1}{\lambda^{-1} - t}d\nu(t)
}{\int_{\alpha}^{\beta}t^{-1}d\nu(t)
\int_{\alpha}^{\beta}\frac{1}{\lambda^{-1}- t}d\nu(t)} \nonumber
\\
& = & \lambda \frac{\int_{\alpha}^{\beta}t^{-1}d\nu(t) (1 +
\int_{\alpha}^{\beta}\frac{t}{\lambda^{-1} - t}d\nu(t)) -
(\int_{\alpha}^{\beta}t^{-1}d\nu(t) +
\int_{\alpha}^{\beta}\frac{1}{\lambda^{-1} - t}d\nu(t))}{
\int_{\alpha}^{\beta}t^{-1}d\nu(t)
\int_{\alpha}^{\beta}\frac{1}{\lambda^{-1}-t}d\nu(t) } \nonumber \\
& = & \frac{ \int_{\alpha}^{\beta}t^{-1}d\nu(t)
\int_{\alpha}^{\beta}\frac{1}{\lambda^{-1} - t}d\nu(t) -
\int_{\alpha}^{\beta}\frac{1}{t(\lambda^{-1} -
t)}d\nu(t)}{\int_{\alpha}^{\beta}t^{-1}
\int_{\alpha}^{\beta}\frac{1}{\lambda^{-1} - t}d\nu(t) } \nonumber
\\
& \rightarrow &   \frac{ - (\int_{\alpha}^{\beta}t^{-1}d\nu(t))^{2}
+ \int_{\alpha}^{\beta}t^{-2}d\nu(t)
}{-(\int_{\alpha}^{\beta}t^{-1})^{2}  } = C \nonumber
\end{eqnarray}
as $\lambda \uparrow \infty$. Note that $f(t) = t^{2}$ is a strictly
convex function on $[\alpha , \beta]$. Assuming that $\nu$ is not a
Dirac mass, it follows from Jensen's inequality that C is a strictly
negative number (we may assume that $\nu$ is not a Dirac mass since
this would imply that $\mu_{1}$ is a Dirac mass and our theorem is
trivially true in this case).

Now, by Cauchy-Schwarz, we have that $|\omega'_{1}(z)| \geq
\omega'_{1} (\infty)$ for all $z \in \mathbb{C}^{+} \setminus
[\beta^{-1} , \alpha^{-1}]$. Indeed, we have that $$| \omega'_{1}
(z)| = \left| \frac{\int \frac{t}{(1-zt)^{2}} d \nu (t)}{ (\int
\frac{1}{1 - zt} d \nu (t))^{2}} \right| = \frac{\|
\frac{\sqrt{t}}{z-t} \|_{2}^{2}}{ \left|\left\langle
\frac{1}{\sqrt{t}} , \frac{\sqrt{t}}{z-t} \right\rangle \right|^{2}}
\geq \omega'_{1} (\infty)$$

Thus, $$ \omega_{1} (\alpha^{-1} + \epsilon) = \omega_{1} (\lambda)
- \int_{\alpha^{-1} + \epsilon}^{\lambda} \omega'_{1}(t) dt \ \leq \
\omega_{1} (\lambda) - \lambda \omega'_{1}(\infty) + (\alpha^{-1} +
\epsilon) \omega'_{1}(\infty)$$ which converges to $(\alpha^{-1} +
\epsilon) \omega'_{1}(\infty) + C$ as $\lambda \uparrow \infty$.

To complete our claim, note that $$\omega_{1} (\beta^{-1} -
\epsilon) = \omega_{1} (0) + \int_{0}^{\beta^{-1} - \epsilon}
\omega'(t)dt \leq \omega'_{1}(\infty) (\beta^{-1} - \epsilon)$$
since $\omega_{1} (0) = 0$.  Thus, $  \mathbb{R}^{+} \setminus
[\omega'_{1}(\infty) \beta^{-1} , \omega'_{1}(\infty) \alpha^{-1} +
C] \subseteq \omega_{1} (\mathbb{R}^{+} \setminus [\beta^{-1} ,
\alpha^{-1}])$ . Since $\psi_{\mu_{1}}$ can be continued
analytically to the right hand set, we have that it also has
analytic continuation to $\mathbb{R}^{+} \setminus
[\omega'_{1}(\infty) \beta^{-1} , \omega'_{1}(\infty) \alpha^{-1} +
C]$.  This implies that the support of $\mu_{1}$ is contained in
$([\omega'_{1}(\infty) \beta^{-1} , \omega'_{1}(\infty) \alpha^{-1}
+ C])^{-1} \subseteq \omega'_{1}(\infty)^{-1}[ \alpha , \beta]$ with
equality if and only if one of the $\mu_{i}$ is a Dirac mass. The
theorem follows.
\end{proof}

\begin{theorem}\label{multtight}
Let $\mu$ be a probability measure with $supp (\mu) \subset
\mathbb{R}^{+}$ different from $\delta_{0}$.  Let $\mu = \mu_{1,k}
\boxtimes \mu_{2,k}$ be a family of decompositions.  There exists a
sequence $\{ \lambda_{k} \} \subset \mathbb{R}^{+}$ so that the
families $\{ \mu_{1,k} \circ D_{\lambda_{k}} \}_{k \in \mathbb{N}}$
and $\{ \mu_{2,k} \circ D_{\lambda_{k}^{-1}} \}_{k \in \mathbb{N}}$
are tight.  Furthermore, $\delta_{0}$ is not in the weak closure of
either of these families of measures.
\end{theorem}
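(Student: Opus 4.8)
The plan is to transpose the proof of Theorem~\ref{noncompact2} to the multiplicative setting under the dictionary: additivity of the Voiculescu transform becomes $S_{\mu\boxtimes\nu}=S_{\mu}S_{\nu}$; the inverse $F_{\mu}^{-1}$ and the identity $F_{\mu_{1,k}}^{-1}+F_{\mu_{2,k}}^{-1}-z=F_{\mu}^{-1}$ become $\chi_{\mu}$ and $\chi_{\mu_{1,k}}(w)\chi_{\mu_{2,k}}(w)=\tfrac{w}{1+w}\chi_{\mu}(w)$ (equivalently $\chi_{\mu_{1,k}}=\omega_{1,k}\circ\chi_{\mu}$); translations by $\delta_{c}$ become dilations $D_{\lambda}$, with $\chi_{\mu\boxtimes\delta_{c}}=\chi_{\mu}/c$ and $S_{\mu\boxtimes\delta_{c}}=S_{\mu}/c$ replacing property~(5) of $\varphi$; linear windows $(x-k,x+k)$ become logarithmic windows of multiplicative radius $e^{k}$; centering at median $0$ becomes centering at median $1$; and Lemma~\ref{BVconv} is replaced by Lemmas~\ref{multlemma} and~\ref{multlemma2}. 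First I would reduce to a statement about $\mu_{1,k}$ alone: it suffices to find dilations $D_{\lambda_{k}}$ making $\{\mu_{1,k}\circ D_{\lambda_{k}}\}$ tight with $\delta_{0}$ outside its weak closure, since then $S_{\mu}=S_{\mu_{1,k}\circ D_{\lambda_{k}}}\cdot S_{\mu_{2,k}\circ D_{\lambda_{k}^{-1}}}$ on a fixed interval $(-\alpha,0)$ where $S_{\mu}$ is a fixed finite strictly positive function, so the uniform bounds supplied by Lemma~\ref{multlemma} for the first factor transfer to $S_{\mu_{2,k}\circ D_{\lambda_{k}^{-1}}}=S_{\mu}/S_{\mu_{1,k}\circ D_{\lambda_{k}}}$; this is precisely the argument behind Corollary~\ref{noncompact2easycorr}, and, together with $S_{\delta_{0}}\equiv\infty$, it also shows $\delta_{0}$ lies outside both weak closures.

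For the reduced statement I argue by contradiction, as in the proof of Theorem~\ref{noncompact2}. After replacing each $\mu_{1,k}$ by its unique median-$1$ dilation (so the bulk of the mass cannot run to $0$ or $\infty$), Lemma~\ref{multlemma} tells us that the only remaining obstruction to tightness-with-separation-from-$\delta_{0}$ is that the mass spreads out on the multiplicative scale: there is $\gamma\in(0,1)$ so that, after passing to a subsequence, $\sup_{x>0}\mu_{1,k}\big((xe^{-k},xe^{k})\big)<\gamma$ for all $k$. Fixing $\epsilon>0$ small, I pick a non-real point $w_{0}$ close to $0$ in the domain of $\chi_{\mu}$ at which $\chi_{\mu}$ is within $\epsilon|w_{0}|$ of its linearization $w_{0}/\!\int t\,d\mu$ (this is the role played by the choice of $ib$ with $|F_{\mu}^{-1}(ib)-ib|\le\epsilon b$), and set $z_{k}:=\chi_{\mu_{1,k}}(w_{0})=\omega_{1,k}(\chi_{\mu}(w_{0}))$; the argument-decreasing property of $\chi_{\mu_{1,k}}$ (dual to the argument-increasing property of $\psi_{\mu_{1,k}}/(1+\psi_{\mu_{1,k}})$) confines $z_{k}$ to a sector bounded away from $\mathbb{R}^{+}$. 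I then split $\mu_{1,k}=\nu_{1,k}+\rho_{1,k}$, where $\nu_{1,k}$ is the restriction of $\mu_{1,k}$ to the logarithmic window of multiplicative radius $e^{k}$ about $|z_{k}|^{-1}$ — so $\lambda_{k}:=\nu_{1,k}(\mathbb{R}^{+})<\gamma$ by the spreading bound — and $\rho_{1,k}$ is supported off that window. Since $\psi$ (equivalently $G_{\mu}(1/\cdot)$) is linear in the measure and $\rho_{1,k}$ is a subprobability measure supported far on the multiplicative scale from $|z_{k}|^{-1}$, its contribution to $\psi_{\mu_{1,k}}(z_{k})$ tends to $0$, so the $\psi$-, $\chi$- and $S$-transforms of $\nu_{1,k}$ and of $\mu_{1,k}$ agree at $z_{k}$ up to $o(1)$.

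The step I expect to be the main obstacle is the multiplicative replacement of the amplification estimate $\Im F_{\nu}(z)=\lambda^{-1}\Im F_{\hat\nu}(z)\ge\lambda^{-1}\Im z$ that closes the additive proof: the $S$-transform has no universal positive lower bound (it tends to $1/\!\int t\,d\hat\nu$ at $0^{-}$), so it cannot simply stand in for $F$. I would route around this through the Cauchy transform, which is still available for subprobability measures on $\mathbb{R}^{+}$: for $\nu$ of mass $\lambda$ one has $G_{\nu}=\lambda G_{\hat\nu}$ and $\Im F_{\hat\nu}(z)\ge\Im z$, and the identity $wG_{\hat\nu}(w)=1+\psi_{\hat\nu}(1/w)$ converts this into an amplification by $\lambda^{-1}$ for $\psi_{\nu}$, hence for $\chi_{\nu}$, at the non-real point $z_{k}$ (the analogue of evaluating at $z_{k}=F_{\mu_{1,k}}^{-1}(ib)$ rather than on the real axis). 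Feeding this into $\chi_{\mu_{1,k}}=\omega_{1,k}\circ\chi_{\mu}$ and the normalization $\chi_{\mu_{1,k}}(w_{0})=z_{k}$, the factor $\lambda_{k}^{-1}>\gamma^{-1}$ overwhelms the $\epsilon$-controlled quantity coming from $\mu$ and produces the contradiction. An alternative that avoids the Cauchy transform is to read off the required strict inequality directly from the representation of $\omega_{1,k}$ in Corollary~\ref{repcorr} via a Jensen-type estimate in the spirit of the displayed computation in the proof of Theorem~\ref{multcompacttight}. The surrounding normal-families and analytic-continuation bookkeeping is routine given Lemmas~\ref{multlemma} and~\ref{multlemma2}.
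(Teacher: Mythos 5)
Your first paragraph (reducing to tightness of the dilated family $\{\mu_{1,k}\circ D_{\lambda_k}\}$ and transferring the bounds to $\mu_{2,k}$ through $S_{\mu}=S_{\mu_{1,k}}S_{\mu_{2,k}}$ and Lemma \ref{multlemma}) is sound and matches the end of the actual argument. The core of your plan, however, has a genuine gap. You split $\mu_{1,k}=\nu_{1,k}+\rho_{1,k}$ with $\rho_{1,k}$ supported off a logarithmic window about $|z_k|^{-1}$ and claim that its contribution to $\psi_{\mu_{1,k}}(z_k)$ tends to $0$. This is false: the kernel $\frac{zt}{1-zt}$ tends to $0$ as $|zt|\rightarrow 0$ but tends to $-1$ (not $0$) as $|zt|\rightarrow\infty$, so the part of $\rho_{1,k}$ living above the window contributes approximately $-\rho_{1,k}\bigl(\{t:|z_kt|\geq e^{k}\}\bigr)$, which can be as large as $1-\gamma$ in modulus. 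The same defect appears if you pass to $wG_{\rho}(w)=\rho(\mathbb{R}^+)+\psi_{\rho}(1/w)$. So the truncation step that drives the contradiction in Theorem \ref{noncompact2} does not transpose; one would have to carry along a nonvanishing real correction term, and it is not clear the contradiction survives. On top of this, the step you yourself flag as the main obstacle --- the multiplicative replacement for $\Im F_{\nu}(z)\geq\lambda^{-1}\Im z$ --- is never actually carried out: you offer two possible routes but neither is reduced to a concrete inequality at $z_k$ that closes the argument. As it stands the proof is incomplete at its central step.

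It is worth knowing that the multiplicative case is genuinely easier than the additive one and does not require any of this machinery. The actual proof dilates $\mu_{1,k}$ so that $\psi_{\mu_{1,k}\circ D_{\lambda_k}}(-1)=-\alpha/2$ where $\alpha=1-\mu(\{0\})$, which forces $|\chi_{\nu_{1,k}}(-\alpha/2)|=1$ for all $k$; it then verifies condition $(2)$ of Lemma \ref{multlemma} directly on the real interval $(-\alpha/2,0)$ using only the identity
\begin{equation*}
\frac{1-t}{-t}\,\chi_{\nu_{1,k}}(-t)\,\chi_{\nu_{2,k}}(-t)=\chi_{\mu}(-t),
\end{equation*}
the strict positivity of $\chi_{\mu}(-t)$ for $\mu\neq\delta_0$, and the monotonicity $\chi'_{\nu}>0$ on $(\mu(\{0\})-1,0)$: if $\inf_k|\chi_{\nu_{1,k}}(-\beta)|=0$ for some $\beta$, then $\chi_{\nu_{2,k}}(-\beta)$ is unbounded, hence so is $\chi_{\nu_{2,k}}(-\alpha/2)$, contradicting the normalization at $-\alpha/2$. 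No subordination functions, truncations, or amplification estimates are needed. The lesson is that Lemma \ref{multlemma} already converts tightness-plus-separation-from-$\delta_0$ into two-sided real-variable bounds on $\chi$ over a fixed real interval, and the multiplicativity of $S$ together with one well-chosen normalization point supplies those bounds immediately.
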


\begin{proof}
Let $1 - \mu(\{ 0 \}) = -\alpha < 0$.  Recall that $\psi_{\mu}$ maps
the negative half line injectively onto $(-\alpha , 0)$.  Also
recall that, for each $k$, $\psi_{\mu_{k}}$ maps the negative half
line injectively onto $(1 -\mu_{i,k}(\{0 \}) , 0)$ and that
$\mu_{i,k}(\{ 0 \}) \leq \mu(\{ 0 \})$.  Thus, for each $k$, there
exists a unique real number $\lambda_{k}$ so that $\psi_{\mu_{1,k}
\circ D_{\lambda_{k}}} (-1) = -\alpha / 2$. Denote the new measure
by $\nu_{1,k}$. Dilate $\mu_{2,k}$ by $D_{\lambda_{k}^{-1}}$ and
denote the new measure by $\nu_{2,k}$. Observe that $\mu = \nu_{1,k}
\boxtimes \nu_{2,k}$ for all $k \in \mathbb{N}$.

Now, observe that $-\alpha/2$ is contained in the domain of
$\chi_{\nu_{1,k}}$  and that $|\chi_{\nu_{1,k}} (-\alpha/2)| = 1$
for all $k \in \mathbb{N}$.  By Lemma \ref{multlemma}, if we can
show that $\inf_{k \in \mathbb{N}} |\chi_{\nu_{1,k}} (-\beta)| > 0$
for all $\beta \in (0 , \alpha/2)$, then $\{ \nu_{1,k} \}$ is tight.

Consider the following equation for $t \in (0 , \alpha/2)$:
\begin{equation}\label{eq1}
 \frac{-t + 1}{-t} \chi_{\nu_{1,k}} (-t) \chi_{\nu_{2,k}} (-t) = \chi_{\mu}
 (-t)
\end{equation}
Assume that for $\beta \in (0, \alpha / 2 )$, we have that $\inf_{k
\in \mathbb{N}} (\chi_{\nu_{1,k}}(-\beta)) = 0$.  Our assumption
that $\mu \neq \delta_{0}$ implies that $\chi_{\mu} (-\beta) > 0$.
Manipulating (\ref{eq1}), this implies that $\{
\chi_{\nu_{2,k}}(-\beta) \}$ are unbounded over $k$ and negative. As
$\chi'_{\nu_{2,k}} (t)
> 0$, this implies that $\{ \chi_{\nu_{2,k}} (-\alpha / 2) \}$ are
unbounded over $k$. However, (\ref{eq1}) and the assumption that
$\chi_{\nu_{1,k}}(-\alpha / 2) \equiv -1$ results in contradiction.
By Lemma \ref{multlemma}, $\{ \nu_{1,k} \}$ is a tight family.

It is easily seen that $\{ \nu_{2,k} \}$ is also a tight family.
Indeed, $\chi_{\nu_{1,k}}(-\alpha / 2) \equiv -1$ implies that
$$\chi_{\nu_{2,k}}(-\alpha / 2) \equiv \frac{\alpha / 2}{1 - \alpha/2} \chi_{\mu} (- \alpha /2)$$
Thus, the first two criteria of Lemma \ref{multlemma} are satisfied
and the last follows from the fact that for fixed $\beta \in (0 ,
\alpha / 2)$,
$$|\chi_{\nu_{2,k}}(-\beta)| = \frac{\beta \chi_{\mu}
 (-\beta)}{ (1 - \beta)\chi_{\nu_{1,k}}(-\beta) } \geq \frac{-\beta \chi_{\mu}
 (-\beta)}{ (1 - \beta) }  > 0$$
\end{proof}

\section{A Khintchine Decompostion for Multiplicative Free Convolution with Measures Supported on the Postive Half Line}

\begin{theorem}\label{multid}
Let $\mu$ be a probability measure with the property that, for any
non trivial decomposition $\mu = \mu_{1} \boxtimes \mu_{2}$, neither
$\mu_{1}$ nor $\mu_{2}$ is indecomposable.  Then $\mu$ is
$\boxtimes$-infinitely divisible.
\end{theorem}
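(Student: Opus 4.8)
The plan is to follow the proof of Theorem~\ref{id} line by line, with the additive functional replaced by a multiplicative analogue read off from the $S$-transform and with dilations playing the role of translations. Put $\alpha_{0}=1-\mu(\{0\})$ (which is positive since $\mu\neq\delta_{0}$) and fix reals $0<\beta<\alpha<\alpha_{0}$. By property $(3)$ of the $S$-transform every $\boxtimes$-divisor $\nu$ of $\mu$ satisfies $\nu(\{0\})\le\mu(\{0\})$, so $[-\alpha,-\beta]$ lies in the domain of $\chi_{\nu}$ and hence of $S_{\nu}$, and there $S_{\nu}>0$, $S_{\nu}'\le 0$. I would set
$$\Lambda(\nu)=\log S_{\nu}(-\alpha)-\log S_{\nu}(-\beta)\ \ (\ge 0),$$
the total oscillation of the monotone function $\log S_{\nu}$ on $[-\alpha,-\beta]$. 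Then $S_{\mu_{1}\boxtimes\mu_{2}}=S_{\mu_{1}}S_{\mu_{2}}$ gives additivity $\Lambda(\mu_{1}\boxtimes\mu_{2})=\Lambda(\mu_{1})+\Lambda(\mu_{2})$; property $(5)$ gives dilation invariance; $\Lambda(\nu)=0$ forces $\log S_{\nu}$ constant on $[-\alpha,-\beta]$, hence (by real-analyticity and the fact that $\chi_{\nu}$ determines $\nu$) $\nu$ a Dirac mass, and conversely; and since a tight, $\delta_{0}$-free, weakly convergent sequence of divisors has $S$-transforms converging uniformly on compact subintervals of $(-\alpha_{0},0)$ (Lemma~\ref{multlemma2} together with a normal-families argument, since the interval furnished by that lemma need not be $[-\alpha,-\beta]$), $\Lambda$ is weakly continuous. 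Finally, the centering convention: each non-$\delta_{0}$ measure $\nu$ has a unique dilate $\hat{\nu}$ with $S_{\hat{\nu}}(-\alpha)=1$ (equivalently $\chi_{\hat{\nu}}(-\alpha)=-\alpha/(1-\alpha)$), and for a centered divisor $\hat\nu$ of $\mu$ one has $\Lambda(\hat\nu)=-\log S_{\hat\nu}(-\beta)\le\Lambda(\mu)$, so $S_{\hat\nu}(-\beta)\ge e^{-\Lambda(\mu)}>0$; with $S_{\hat\nu}(-\alpha)=1$ and monotonicity this is exactly the hypothesis of Lemma~\ref{multlemma}, so \emph{the family of all centered $\boxtimes$-divisors of $\mu$ is tight and its weak closure avoids $\delta_{0}$} (the multiplicative substitute for Corollary~\ref{tcenter}).

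With these ingredients the rest is a transcription of the argument for Theorem~\ref{id}. First I would show that for every $\epsilon>0$ there is a nontrivial decomposition $\mu=\nu\boxtimes\rho$ with $0<\Lambda(\nu)<\epsilon$: if the infimum $a$ of $\Lambda(\nu)$ over nontrivial decompositions were positive, then centering a minimizing sequence and using tightness together with the weak continuity of $\boxtimes$ (Lemma~\ref{multcont}) and of $\Lambda$ yields a decomposition with $\Lambda(\nu)=a$; by hypothesis $\nu$ is not indecomposable, so $\nu=\nu_{0}\boxtimes\nu_{1}$ nontrivially with $a>\Lambda(\nu_{i})>0$, and $\mu=\nu_{0}\boxtimes(\nu_{1}\boxtimes\rho)$ contradicts minimality of $a$. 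The same comparison argument shows that $\Lambda$ assumes every value in $(0,\Lambda(\mu))$ as it ranges over divisors of $\mu$, so by induction, for each $n$ there is a decomposition $\mu=\mu_{n,1}\boxtimes\cdots\boxtimes\mu_{n,n}\boxtimes\delta_{c_{n}}$ with $\Lambda(\mu_{n,i})=\Lambda(\mu)/n$ and each $\mu_{n,i}$ centered.

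Next I would prove that the array $\{\mu_{n,i}\}$ converges uniformly to $\delta_{1}$ (the $\boxtimes$-identity, $S_{\delta_{1}}\equiv 1$) as $n\uparrow\infty$. Each $\mu_{n,i}$ is a centered divisor of $\mu$, so the whole array is tight with weak closure avoiding $\delta_{0}$; and since $\Lambda(\mu_{n,i})=\Lambda(\mu)/n$, the oscillation of $S_{\mu_{n,i}}$ on $[-\alpha,-\beta]$ tends to $0$, so with $S_{\mu_{n,i}}(-\alpha)\equiv 1$ we get $S_{\mu_{n,i}}\to 1$ uniformly on $[-\alpha,-\beta]$, uniformly in $i$; by Lemma~\ref{multlemma2} this forces $\mu_{n,i}\to\delta_{1}$ uniformly in $i$. (Here the multiplicative setting is actually smoother than the additive one: monotonicity of the real function $S_{\nu}$ does the work that Lemma~\ref{diff_bound} and the area integral did in Theorem~\ref{id}.) Iterating property $(3)$ gives $\max_{i}\mu_{n,i}(\{0\})=\mu(\{0\})$ for every $n$; since uniform weak convergence to $\delta_{1}$ forces $\max_{i}\mu_{n,i}(\{0\})\to 0$, we conclude $\mu(\{0\})=0$, so in particular every $\mu_{n,i}(\{0\})=0$. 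Then Theorem~\ref{beltheor}, in its form weakened to arrays with $\mu_{n,i}(\{0\})=0$, applied to $\delta_{c_{n}}\boxtimes\mu_{n,1}\boxtimes\cdots\boxtimes\mu_{n,n}=\mu$, yields that $\mu$ is $\boxtimes$-infinitely divisible.

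The main obstacle is not a single computation but keeping the degenerate measure $\delta_{0}$ out of every weak closure that occurs, since without that control neither Lemma~\ref{multlemma2} nor Theorem~\ref{beltheor} applies; this is what forces the passage through \emph{dilations} rather than a naive direct analogue of translations, and it is handled by the uniform $\Lambda$-bound on centered divisors established above (which is also precisely Theorem~\ref{multtight}). The second point requiring care — and the genuinely new feature compared with the additive case — is the atom at $0$: instead of assuming it away, one observes that the identity $(\mu_{1}\boxtimes\mu_{2})(\{0\})=\max\{\mu_{1}(\{0\}),\mu_{2}(\{0\})\}$, combined with the uniform convergence of the array to $\delta_{1}$, forces $\mu(\{0\})=0$ automatically, which is exactly what is needed to invoke Theorem~\ref{beltheor}.
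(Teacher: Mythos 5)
Your proof is correct and follows essentially the same route as the paper: the paper normalizes divisors by dilation so that $S_{\nu}(-\beta)=1$ and uses the multiplicative functional $\nu\mapsto S_{\nu}(-\gamma)$ together with monotonicity of $S$, the infimum and intermediate-value arguments, uniform convergence of the resulting array to $\delta_{1}$ via Lemma \ref{multlemma2}, the atom identity $(\mu_{1}\boxtimes\mu_{2})(\{0\})=\max\{\mu_{1}(\{0\}),\mu_{2}(\{0\})\}$ to kill the mass at $0$, and finally Theorem \ref{beltheor}. Your $\Lambda(\nu)=\log S_{\nu}(-\alpha)-\log S_{\nu}(-\beta)$ is just the logarithm of that functional (making it additive and dilation-invariant rather than multiplicative on normalized divisors), so the two arguments coincide step for step.
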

\begin{proof}

Let $\alpha = 1 - \mu(\{ 0 \} )$. We will show later that $\alpha =
1$. Recall that $S_{\mu}$, $S_{\mu_{1}}$ and $S_{\mu_{2}}$ are all
defined on an open neighborhood of $(-\alpha , 0)$ for any
decomposition $\mu = \mu_{1} \boxtimes \mu_{2}$. We assume without
loss of generality that $S_{\mu} (-\beta) = 1$ for some $\beta \in
(0, \alpha)$ (indeed, pick any $\beta$ in this interval and then
consider $\mu \boxtimes \delta_{c}$ where $c = (-\beta
\chi_{\mu}(-\beta)) / (1 - \beta) \ $).

We denote by $M_{\beta}$ the set of all probability measures $\nu
\in M_{\mathbb{R}^{+}}$ such that $S_{\nu}(-\beta) = 1$ and $\mu =
\nu \boxtimes \rho$ for a probability measure $\rho \in
M_{\mathbb{R}^{+}}$.  Observe that $S_{\mu}(-\beta) =
S_{\nu}(-\beta) S_{\rho}(-\beta)$ implies that $\rho \in M_{\beta}$.
Further note that for any decomposition $\mu = \nu' \boxtimes \rho'$
there exists a real number $c$ such that $\nu' \boxtimes \delta_{c}$
, $\rho \boxtimes \delta_{c^{-1}} \in M_{\beta}$.  Lastly, it is the
content of Theorem \ref{multtight} that $M_{\beta}$ is weak$^{\ast}$
compact.

Fix $\gamma \in (0 , \beta)$.  We claim that given any $\epsilon
> 0$, there exists an element $\nu \in M_{\beta}$ such that
$1 > S_{\nu} (-\gamma) > 1 - \epsilon$.  To show this, assume
instead that there is a $\delta
> 0$ so that $1 - \delta$ is the supremum of $S_{\nu} (-\gamma)$ ranging over all nontrivial elements
 in $M_{\beta}$. By compactness, we may pass to a
cluster point, and assume that we have a decomposition $\mu =
\mu_{1} \boxtimes \mu_{2}$ where $S_{\mu_{1}}(-\gamma)$ takes on
this supremum. Now, by assumption, we have a nontrivial
decomposition $\mu_{1} = \nu_{0} \boxtimes \nu_{1}$ where
$S_{\nu_{i}} (-\beta) = 1$ for $i = 0,1$. Since $S'_{\nu_{i}} \leq
0$, this implies that both $S_{\nu_{i}} (-\gamma) < 1$ (we would
have equality if and only if $\nu_{i}$ were a Dirac mass, which we
have assumed away). As their product satisfies $S_{\nu_{0}}
(-\gamma) S_{\nu_{1}} (-\gamma) = S_{\mu_{1}} (-\gamma) = 1 -
\delta$, we have that $S_{\nu_{i}} (-\gamma) > 1 - \delta$ for $i =
1,2$. Thus, the decomposition $\nu_{0} \boxtimes (\nu_{1} \boxtimes
\mu_{2})$ violates the above supremum.

We next claim that $S_{\nu} (-\gamma)$ takes on all values of the
interval $[S_{\mu} (-\gamma) , 1]$ as we range over elements in
$M_{\beta}$.  Clearly our compactness result implies that the range
of the $S_{\nu}(-\gamma)$ is closed.  We assume, for the sake of
contradiction, that there exists real numbers $\delta > 0$ and
$\lambda
> S_{\mu} (-\gamma)$ such that $S_{\nu}(-\gamma)$ does not take on
any values in the interval $(\lambda - \delta , \lambda)$ for $\nu
\in M_{\beta}$ and that this interval is maximal in this regard.
Passing to cluster points, we assume that $S_{\mu_{1}} (-\gamma) =
\lambda$ for a decomposition $\mu = \mu_{1} \boxtimes \mu_{2}$. Now,
pick a nontrivial decomposition $\mu_{2} = \nu_{0} \boxtimes
\nu_{1}$ so that $S_{\nu_{0}} (-\gamma)$ is close enough to $1$  so
that $\lambda S_{\nu_{0}} (-\gamma) \in (\lambda - \delta ,
\lambda)$. Transferring this mass, we obtain our contradiction.

By induction,  there exists a decomposition $\mu = \mu_{n,1}
\boxtimes \cdots \boxtimes \mu_{n,n}$ such that $S_{\mu_{n,i}}
(-\beta) = 1$ and $S_{\mu_{n,i}} (-\gamma) =
\sqrt[n]{S_{\mu}(-\gamma)} $ for all $n \in \mathbb{N}$ and $i =
1,2, \ldots, n$. Observe that this implies that $S_{\mu_{n,i}}(-t)
\rightarrow 1$ uniformly for $t \in (\gamma , \beta)$ and $n \in
\mathbb{N}$ ($S_{\mu_{n,i}}$ is non-increasing on this interval). By
Lemma \ref{multlemma2} this implies that any subsequence of our
array $\{ \mu_{n,i} \}_{n \in \mathbb{N} , \ i = 1,2,\ldots, n}$
converges to $\delta_{1}$. Compactness implies that our array
converges to $\delta_{1}$ uniformly over $n$.  Lastly, note that
this implies that our measures satisfy $\mu_{n,i}(\{ 0\}) = 0$.
Indeed, observe that $\max_{i = 1,2, \ldots , n} \mu_{n,i} (\{ 0 \})
\rightarrow 0$ since we have uniform weak convergence to
$\delta_{1}$.  Since $\mu (\{ 0 \}) = \max_{i = 1,2, \ldots , n}
\mu_{n,i} (\{ 0 \})$ we must have no mass at $0$ for $\mu$ or for
any element in our array.

Thus, we may now invoke Theorem \ref{beltheor} which implies that
our measure $\mu$ is $\boxtimes$-infinitely divisible.
\end{proof}

\begin{theorem}\label{multdecomp}
Let $\mu \in M_{\mathbb{R_{+}}}$ different from $\delta_{0}$.  Then
there exist measures $\mu_{i}$ with $i = 0,1,2, \ldots $ such that
$\mu_{0}$ is $\boxtimes$-infinitely divisible, $\mu_{i}$ is
$\boxtimes$-indecomposable for $i = 1, 2, \ldots $, and $\mu =
\mu_{0} \boxtimes \mu_{1} \boxtimes \mu_{2} \boxtimes \cdots$. This
decomposition is not unique.
\end{theorem}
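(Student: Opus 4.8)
The plan is to run the argument of Theorem \ref{k1} essentially line for line, with the area functional $\Lambda$ replaced by a multiplicative substitute built from the $S$-transform, and with the $t$-centering apparatus (Corollary \ref{belcorr}, Lemma \ref{bellemma}) replaced by a normalization of the $S$-transform. Fix reals $0<\gamma<\beta<1-\mu(\{0\})$; this interval is nonempty because $\mu\neq\delta_{0}$. For a probability measure $\nu$ on $\mathbb{R}^{+}$ with $S_{\nu}$ defined at $-\beta$ and $-\gamma$, put $\Lambda_{\boxtimes}(\nu):=\log S_{\nu}(-\beta)-\log S_{\nu}(-\gamma)$. Using the listed properties of $S$ (multiplicativity under $\boxtimes$, positivity, $S'\le 0$) together with Lemma \ref{multlemma2}, one checks that $\Lambda_{\boxtimes}$ is additive under $\boxtimes$, invariant under the dilations $D_{c}$, finite, nonnegative, vanishes exactly on the Dirac masses, and is weakly continuous along tight families whose weak$^{\ast}$ closure avoids $\delta_{0}$. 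The normalization $S_{\nu}(-\beta)=1$ plays the role of $t$-centering: every measure has a unique dilate with this property, it is preserved by $\boxtimes$, and by Lemma \ref{multlemma2} it passes to weak limits inside $\delta_{0}$-avoiding tight families. This is what makes the multiplicative argument in fact a little shorter than the additive one.

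If $\mu$ is $\boxtimes$-infinitely divisible there is nothing to prove. Otherwise Theorem \ref{multid} gives a nontrivial decomposition of $\mu$ with an indecomposable (hence non-Dirac) factor. We then proceed greedily exactly as in Theorem \ref{k1}: having produced $\mu_{0,n-1}$, normalized so that $S(-\beta)=1$, let $\alpha_{n-1}$ be the supremum of $\Lambda_{\boxtimes}(\rho)$ over indecomposable divisors $\rho$ of $\mu_{0,n-1}$, pick an indecomposable $\mu_{n}$ with $\Lambda_{\boxtimes}(\mu_{n})>\alpha_{n-1}/2$ and $\mu_{0,n-1}=\mu_{0,n}\boxtimes\mu_{n}$, and dilate the two factors so that each has $S(-\beta)=1$. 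If ever $\alpha_{n}=0$, then, since an indecomposable non-Dirac measure has $\Lambda_{\boxtimes}>0$, every indecomposable divisor of $\mu_{0,n}$ is a Dirac mass, so no nontrivial $\boxtimes$-factor of $\mu_{0,n}$ is indecomposable, and $\mu_{0,n}$ is $\boxtimes$-infinitely divisible by Theorem \ref{multid}; the process stops. So assume $\alpha_{n}>0$ for all $n$, and set $\nu_{n}=\mu_{1}\boxtimes\cdots\boxtimes\mu_{n}$ and $\nu_{n,m}=\mu_{m+1}\boxtimes\cdots\boxtimes\mu_{n}$.

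Since every factor and every $\mu_{0,n}$ is normalized, $S_{\nu_{n,m}}(-\beta)=1$; and from $S_{\mu}=S_{\mu_{0,n}}\prod_{i\le n}S_{\mu_{i}}$, with $0<S_{\bullet}(-t)\le 1$ for $t\in(0,\beta)$, one gets the uniform bound $S_{\nu_{n,m}}(-t)\ge S_{\mu}(-t)>0$. By Lemma \ref{multlemma} the family $\{\nu_{n,m}\}$ (hence $\{\nu_{n}\}$, and by the same estimate $\{\mu_{0,n}\}$) is tight with $\delta_{0}$ outside its weak$^{\ast}$ closure. Additivity gives $\Lambda_{\boxtimes}(\mu)=\Lambda_{\boxtimes}(\mu_{0,n})+\sum_{i=1}^{n}\Lambda_{\boxtimes}(\mu_{i})$; as $\Lambda_{\boxtimes}(\mu_{0,n})$ is nonincreasing and bounded, the series converges, so $\Lambda_{\boxtimes}(\nu_{n,m})\to 0$ uniformly in $n$ as $m\to\infty$ and $\alpha_{n}\to 0$. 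Weak continuity of $\Lambda_{\boxtimes}$ and the normalization then force every cluster point of $\{\nu_{n,m}\}$ (as $m\to\infty$) to be $\delta_{1}$; combining this with $\nu_{n',m}=\nu_{n,m}\boxtimes\nu_{n',n}$ shows that $\nu_{\infty,m}:=\lim_{n}\nu_{n,m}$ exists, that $\nu_{\infty,m}\to\delta_{1}$ as $m\to\infty$, and that $\nu_{n}$ itself converges, say to $\nu_{\infty}=\nu_{m}\boxtimes\nu_{\infty,m}$. This is the step where the additive proof needed Lemma \ref{bellemma} and here does not.

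Let $\mu_{0}$ be a cluster point of $\{\mu_{0,n}\}$, along a subsequence $(i_{j})$. As in Theorem \ref{k1}, $\mu_{0}\boxtimes\nu_{\infty}=\lim_{j}\mu_{0,i_{j}}\boxtimes\nu_{i_{j}}\boxtimes\nu_{\infty,i_{j}}=\lim_{j}\mu\boxtimes\nu_{\infty,i_{j}}=\mu$, so $\mu=\lim_{n}\mu_{0}\boxtimes\nu_{n}$, and the same bookkeeping gives $\mu_{0,k}=\mu_{0}\boxtimes\nu_{\infty,k}$ for every $k$. Finally $\mu_{0}$ is $\boxtimes$-infinitely divisible: if not, Theorem \ref{multid} produces a nontrivial decomposition of $\mu_{0}$ with an indecomposable (necessarily non-Dirac) factor $\nu$, so $\Lambda_{\boxtimes}(\nu)>0$; choosing $n$ with $\alpha_{n}<\Lambda_{\boxtimes}(\nu)$, the measure $\nu$ divides $\mu_{0,n}=\mu_{0}\boxtimes\nu_{\infty,n}$, contradicting the definition of $\alpha_{n}$. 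Hence $\mu=\mu_{0}\boxtimes\mu_{1}\boxtimes\mu_{2}\boxtimes\cdots$ with $\mu_{0}$ infinitely divisible and the $\mu_{i}$ indecomposable; an atom of $\mu$ at $0$ is harmless, since $\beta,\gamma$ were chosen below $1-\mu(\{0\})$ and $\mu_{i}(\{0\})\le\mu(\{0\})$ for every factor. The failure of uniqueness is deferred to Section \ref{applicationsection}. The only step that needs genuine care rather than transcription from the additive proof is establishing the properties of $\Lambda_{\boxtimes}$ --- above all its weak continuity on $\delta_{0}$-avoiding tight families and the stability of the normalization under limits --- and this is precisely what Lemma \ref{multlemma2} delivers.
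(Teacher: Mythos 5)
Your proposal is correct and follows essentially the same route as the paper: the normalization $S_{\nu}(-\beta)=1$ is exactly the paper's class $M_{\beta}$, and your functional $\Lambda_{\boxtimes}(\nu)=\log S_{\nu}(-\beta)-\log S_{\nu}(-\gamma)$ is just $-\log S_{\nu}(-\gamma)$ on that class, so your greedy condition $\Lambda_{\boxtimes}(\mu_{n})>\alpha_{n-1}/2$ is the paper's condition $S_{\mu_{n}}(-\gamma)<\sqrt{\inf}$ in logarithmic disguise, and the tightness, convergence-to-$\delta_{1}$, cluster-point, and final-contradiction steps all coincide with the paper's. The only (harmless) divergence is cosmetic: you package the $S$-transform data as an additive functional to make the transcription from Theorem \ref{k1} literal, where the paper works with the multiplicative quantities $S_{\nu}(-\gamma)$ directly.
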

\begin{proof}
We again assume without loss of generality that $S_{\mu} (-\beta) =
1$ for some $\beta \in (0 , 1 - \mu(\{ 0 \}))$.  In what follows,
all decompositons will be taken from elements in $M_{\beta}$

Pick $\gamma \in (0,\beta)$.  Let $\alpha = S_{\mu} (-\gamma) \leq
1$ (with equality if and only if $\mu = \delta_{1}$, in which case
the theorem is trivially true).  Now, let $\alpha_{0} = \inf \{
S_{\nu} (-\gamma) \}$ where the infimum is taken over all
indecomposable $\nu \in M_{\beta}$.  If $\alpha_{0} = 1$ then, by
Theorem \ref{multid}, our theorem holds.  If not, let $\mu =
\mu_{0,1} \boxtimes \mu_{1}$ with $\mu_{1} \in M_{\beta}$
indecomposable satisfying $S_{\mu_{1}} (-\gamma) >
\sqrt{\alpha_{0}}$.

At the $n$th stage of this process, we start with a decompostion
$\mu = \mu_{0, n-1 } \boxtimes \mu_{1} \boxtimes \mu_{n-1}$ where
all divisors are elements of $M_{\beta}$ and $\mu_{i}$ is
indecomposable for $i = 1,2, \ldots, n-1$.  We let $\alpha_{n-1} =
\inf \{ S_{\nu} (-\gamma) \}$ where the infimum is taken over all
indecomposable $\nu \in M_{\beta}$ such that $\mu_{0, n-1} = \nu
\boxtimes \rho$ for some $\rho \in M_{\beta}$ (observe that $\mu_{0,
n-1} , \ \nu \in M_{\beta}$ implies that $\rho \in M_{\beta}$) . If
at any point $\alpha_{n} = 1$ then, by Theorem \ref{multid}, we are
done. Thus, we assume that $\alpha_{n} < 1$ for all $n \in
\mathbb{N}$. Let $\mu_{0,n-1} = \mu_{0,n} \boxtimes \mu_{n}$ where
$\mu_{n} \in M_{\beta}$ is indecomposable and satisfies $S_{\mu_{n}}
(-\gamma) > \sqrt{\alpha_{n}}$.  At this point, we have a
decomposition $\mu = \mu_{0,n} \boxtimes \mu_{1} \boxtimes \cdots
\boxtimes \mu_{n}$ satisfying $\mu_{0,n}$ , \ $\mu_{i} \in
M_{\beta}$, $\mu_{i}$ is indecomposable and $S_{\mu_{i}}(-\gamma) >
\sqrt{\alpha_{i}}$ for all $i = 1,2, \ldots , n -1 $.

In what follows, we will use the following notation for $n > m$:
$$\nu_{n} = \mu_{1} \boxtimes \cdots \boxtimes \mu_{n}$$
$$\nu_{n,m} = \mu_{m+1} \boxtimes \cdots \boxtimes \mu_{n} $$
$$\nu_{\infty , m} = \lim_{n \uparrow \infty} \mu_{m+1} \boxtimes \cdots \boxtimes \mu_{n}$$
We will show later that this last element actually converges to a
measure in $M_{\beta}$.

Now, observe that $\{ \nu_{n,m} \}_{m < n \in \mathbb{N}}$ is a
tight family since it is a subset of $M_{\beta}$.  We claim that
$\nu_{n,m} \rightarrow \delta_{1}$ uniformly in the weak$^{\ast}$
topology as $m \uparrow \infty$.  Indeed, observe that
$S_{\mu_{0,n}}(-\gamma)$ is increasing and bounded by $1$ which
implies convergence.  Furthermore,
$$S_{\mu} (-\gamma) = S_{\mu_{0,n}}(-\gamma) \ast  S_{\nu_{n}}(-\gamma) =
S_{\mu_{0,n}}(-\gamma) \ast
 S_{\nu_{m}}(-\gamma) \ast S_{\nu_{n,m}}(-\gamma)$$so that
$S_{\nu_{n,m}}(-\gamma)$ represents the tail of a convergent
product.  This implies that $S_{\nu_{n,m}}(-\gamma) \rightarrow 1$
uniformly over $n\in \mathbb{N}$ as $m \uparrow \infty$ (observe
that this also implies that $\alpha_{n} \uparrow 1$). By Lemma
\ref{multlemma2}, any convergent subsequence must converge to
$\delta_{1}$.  By tightness, we must have uniform convergence to
$\delta_{1}$ as $m \uparrow \infty$.

Now, let $\mu_{0}$ be a cluster point of $\mu_{0,n}$.  We claim that
$\mu_{0} \boxtimes \nu_{n} \rightarrow \mu$ in the weak$^{\ast}$
topology.  Indeed, let $i_{k}$ be a subsequence on which $\mu_{0,
i_{k}}$ converges to $\mu_{0}$ and let $f$ map $\mathbb{N}$ onto
this subsequence by letting $f(n) = i_{k}$ where $i_{k} \leq n <
i_{k+1}$.  We then have that $\lim_{n \uparrow \infty} \mu_{0}
\boxtimes \nu_{n} = \lim_{n \uparrow \infty} \mu_{0,f(n)} \boxtimes
\nu_{n} = \lim_{n \uparrow \infty} \mu_{0,f(n)} \boxtimes \nu_{f(n)}
\boxtimes \nu_{n, f(n)} = \lim_{n \uparrow \infty} \mu \boxtimes
\nu_{n, f(n)}$.  As we saw in the previous paragraph, the right hand
side converges to $\mu$.

It remains to show that $\mu_{0}$ is infinitely divisible.  As in
Theorem \ref{k1}, we will show that $\mu_{0} \boxtimes
\nu_{\infty,n} = \mu_{n,0}$.  Indeed, note that $\mu_{0} \boxtimes
\nu_{\infty,n} = \lim_{k \uparrow \infty} \mu_{0, i_{k}} \boxtimes
\nu_{\infty , n} = \lim_{k \uparrow \infty} \mu_{0, i_{k}} \boxtimes
\nu_{i_{k} , n} \boxtimes \nu_{\infty , i_{k}} = \lim_{k \uparrow
\infty} \mu_{n,0} \boxtimes \nu_{\infty , i_{k}} = \mu_{n,0}$,
proving our claim (the second to last equality follows from the fact
that, by construction, $\mu_{0,n} \boxtimes \nu_{n,m} = \mu_{0,m}$
for all $m < n \in \mathbb{N}$).

To complete the proof, assume that $\mu_{0} = \nu \boxtimes \rho$
where $\nu$ is indecomposable and satisfies $S_{\nu}(-\gamma) < 1$.
Pick $n$ such that $\alpha_{n} < S_{\nu}(-\gamma)$.  As $\mu_{0,n} =
\mu_{0} \boxtimes \nu_{\infty , n}$ and the left hand side has no
indecomposable divisors satisfying the above inequality, we have a
contradiction.  Thus, $\mu_{0}$ has no nontrivial divisors so that,
by Theorem $\ref{multid}$, our theorem holds.
\end{proof}

\section{Background and Terminology for Measures Supported on the Unit Circle}

 Let $M_{\mathbb{T}}$ be the
set of all Borel probability measures supported on the unit circle.
Let $M_{\ast}$ be the set of all Borel probability measures on
$\mathbb{C}$ with nonzero first moment.  For a measure $\mu \in
M_{\ast} \cap M_{\mathbb{T}}$ the following definition:
$$\psi_{\mu}(z) = \int_{\mathbb{T}} \frac{zt}{1-zt}d\mu(t) $$
Observe that $\psi_{\mu} (0)= 0$ and $\psi'_{\mu} (0) = \int_{C} t
d\mu_{t}$ so that our assumption of nonzero first moment implies
that $\psi_{\mu}^{-1} = \chi_{\mu}$ is defined and analytic in
neighborhood of $0$.  We again define $S_{\mu}(z) =
(1+z)\chi_{\mu(z)} / z$.  Observe that $S_{\mu} (0) = 1 /
\psi'_{\mu}(0)$ so that $S_{\mu}$ is also defined and analytic in a
neighborhood of $0$. Further note that
$$|\psi'_{\mu} (0)| = \left|\int_{\mathbb{T}} \zeta d\mu (\zeta)\right| \leq \int_{\mathbb{T}} |\zeta| d\mu (\zeta) =1 $$
which implies that $|S_{\mu} (0)| \geq 1$ for $\mu \in M_{\ast} \cap
M_{\mathbb{T}}$.

We now record the following lemmas and theorems for use in proving
our main results. These were first proven in \cite{V2}, \cite{BV2}
and \cite{Bel}.

\begin{lemma}\label{circmult0}
Let $\mu \in M_{\ast} \cap M_{\mathbb{T}}$ satisfy $|S_{\mu} (\{ 0
\})| = 1$.  Then $\mu = \delta_{\alpha}$ for some $\alpha \in
\mathbb{T}$

\end{lemma}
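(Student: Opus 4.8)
The plan is to show that $|S_\mu(0)| = 1$ forces the first moment of $\mu$ to have modulus $1$, and then to invoke the equality case of the triangle inequality for integrals to conclude that $\mu$ is concentrated at a single point of $\mathbb{T}$. Recall from the setup that $S_\mu(0) = 1/\psi_\mu'(0)$ and $\psi_\mu'(0) = \int_{\mathbb{T}} t \, d\mu(t)$, so the hypothesis $|S_\mu(0)| = 1$ is exactly the statement $\left|\int_{\mathbb{T}} t \, d\mu(t)\right| = 1$.

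First I would observe that, since $|t| = 1$ for $\mu$-almost every $t$, the bound $\left|\int_{\mathbb{T}} t\, d\mu(t)\right| \le \int_{\mathbb{T}} |t| \, d\mu(t) = 1$ is an equality under our hypothesis. The next step is the standard equality analysis: writing $m = \int_{\mathbb{T}} t\, d\mu(t)$ with $|m| = 1$, choose $\alpha \in \mathbb{T}$ with $m = \alpha$. Then
$$ 1 = \Re\!\left( \bar\alpha \int_{\mathbb{T}} t \, d\mu(t) \right) = \int_{\mathbb{T}} \Re(\bar\alpha t) \, d\mu(t). $$
Since $\Re(\bar\alpha t) \le |\bar\alpha t| = 1$ for every $t \in \mathbb{T}$, and the integral of $1 - \Re(\bar\alpha t) \ge 0$ against $\mu$ is zero, we conclude that $\Re(\bar\alpha t) = 1$ for $\mu$-almost every $t$. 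On the unit circle $\Re(\bar\alpha t) = 1$ holds only at $t = \alpha$, so $\mu(\{\alpha\}) = 1$, i.e. $\mu = \delta_\alpha$.

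The only mild subtlety to handle carefully is the measure-theoretic step that $\int (1 - \Re(\bar\alpha t))\, d\mu(t) = 0$ together with nonnegativity of the integrand forces the integrand to vanish $\mu$-a.e.; this is immediate from the fact that a nonnegative integrable function with zero integral is zero almost everywhere. I do not anticipate a real obstacle here — the lemma is essentially the equality case of the triangle inequality transported through the identity $S_\mu(0) = 1/\psi_\mu'(0)$. One should simply be a little careful that $\psi_\mu'(0)$ is genuinely the first moment (which is recorded in the background material) and that $\mu$ being supported on $\mathbb{T}$ is what makes the pointwise bound $\Re(\bar\alpha t) \le 1$ with equality only at $t = \alpha$ available.
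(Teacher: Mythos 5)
Your proof is correct. The paper itself gives no argument for this lemma---it is simply recorded as a known fact with citations to the literature---and your derivation via the identity $S_{\mu}(0) = 1/\psi_{\mu}'(0)$ together with the equality case of the triangle inequality for $\bigl|\int_{\mathbb{T}} t\, d\mu(t)\bigr| \leq 1$ is the standard, complete way to establish it.
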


\begin{lemma}\label{circconv}
Let $\mu_{i} \in M_{\ast} \cap M_{\mathbb{T}}$ be such that
$S_{\mu_{i}}(z)$ converge uniformly in some neighborhood of $0$ to a
function $S(z)$.  Then there exists $\mu \in M_{\ast} \cap
M_{\mathbb{T}}$ such that $S = S_{\mu}$

\end{lemma}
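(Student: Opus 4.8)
The plan is to realize $\mu$ as a weak-$*$ cluster point of $\{\mu_i\}$ and then to match its $S$-transform with $S$, using throughout the standard dictionary $\chi_\nu(z) = z S_\nu(z)/(1+z)$, $S_\nu(z) = (1+z)\chi_\nu(z)/z$, and $\chi_\nu = \psi_\nu^{-1}$. First I would pass from $S$-transforms to $\chi$-transforms: since $|z/(1+z)|$ is bounded near $0$, the hypothesis implies that $\chi_{\mu_i}(z) = z S_{\mu_i}(z)/(1+z)$ converge uniformly on a fixed neighborhood of $0$ to the analytic function $\chi(z) := z S(z)/(1+z)$, which satisfies $\chi(0) = 0$ and $\chi'(0) = S(0)$. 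Because $|S_{\mu_i}(0)| \geq 1$ for all $i$, we get $|S(0)| \geq 1$, so $\chi'(0) \neq 0$; hence, shrinking if necessary, $\chi$ is univalent on the closed disk $\overline{D_\rho}$ and $\chi(D_\rho)$ contains a disk $D_\sigma$ centered at $0$.

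Next I would invoke weak-$*$ compactness of the Borel probability measures on the compact set $\mathbb{T}$: after passing to a subsequence we may assume $\mu_i \to \mu$ weakly for some probability measure $\mu$ on $\mathbb{T}$. For $|w| < 1$ the function $t \mapsto wt/(1-wt)$ is continuous on $\mathbb{T}$, and $\psi_\mu(w) = \int_{\mathbb{T}} \frac{wt}{1-wt}\,d\mu(t)$ is analytic in $w$ on the open unit disk; so weak convergence gives $\psi_{\mu_i}(w) \to \psi_\mu(w)$ for every $|w| < 1$.

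The crux is to show that $\psi_\mu$ is, near $0$, the inverse of $\chi$. The point requiring care is that the inverse functions $\psi_{\mu_i} = \chi_{\mu_i}^{-1}$ a priori live on $i$-dependent domains, and this is handled by a Rouch\'e argument: for all large $i$ one has $\sup_{\overline{D_\rho}} |\chi_{\mu_i} - \chi| < \sigma/2$, so $\chi_{\mu_i}$ is univalent on $D_\rho$, its image contains $D_{\sigma/2}$, and $\psi_{\mu_i}$ is defined on $D_{\sigma/2}$ with $\chi_{\mu_i}(\psi_{\mu_i}(w)) = w$ there. Fix $w \in D_{\sigma/2}$ and set $z_i = \psi_{\mu_i}(w) \in D_\rho$. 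By the previous paragraph $z_i \to \psi_\mu(w)$, and since $|\chi(z_i) - \chi_{\mu_i}(z_i)| \leq \sup_{\overline{D_\rho}}|\chi - \chi_{\mu_i}| \to 0$ while $\chi_{\mu_i}(z_i) = w$, we conclude $\chi(\psi_\mu(w)) = w$ for all $w \in D_{\sigma/2}$. As $\chi$ is univalent near $0$, this forces $\psi_\mu = \chi^{-1}$ on a neighborhood of $0$.

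It remains to read off the conclusion. From $\psi_\mu = \chi^{-1}$ we obtain $\psi_\mu'(0) = 1/\chi'(0) = 1/S(0) \neq 0$, i.e.\ $\int_{\mathbb{T}} t\,d\mu(t) \neq 0$, so $\mu \in M_* \cap M_{\mathbb{T}}$ and $\chi_\mu = \psi_\mu^{-1} = \chi$ on a neighborhood of $0$. Therefore $S_\mu(z) = (1+z)\chi_\mu(z)/z = (1+z)\chi(z)/z = S(z)$ there, which is exactly the assertion. The genuinely delicate step is the third one; the remainder is routine bookkeeping with the $\psi$/$\chi$/$S$ correspondence together with weak-$*$ compactness of measures on $\mathbb{T}$.
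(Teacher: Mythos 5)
Your argument is correct. Note that the paper itself offers no proof of this lemma --- it is recorded as a known fact from the cited literature (\cite{V2}, \cite{BV2}, \cite{Bel}) --- so there is nothing internal to compare against; your proof is the standard one (weak$^{\ast}$ compactness of $M_{\mathbb{T}}$, pointwise convergence of the $\psi$-transforms on the disk, and a Rouch\'e/local-inversion argument to transfer uniform convergence of the $\chi_{\mu_{i}}$ to the identity $\chi\circ\psi_{\mu}=\mathrm{id}$ near $0$), and it correctly isolates the one genuinely delicate point, namely that the inverses live on $i$-dependent domains. The only detail worth making explicit is the step where you identify the Rouch\'e-inverse of $\chi_{\mu_{i}}$ on $D_{\sigma/2}$ with $\psi_{\mu_{i}}$: this uses $\psi_{\mu_{i}}\bigl(\chi_{\mu_{i}}(z)\bigr)=z$ on all of $D_{\rho}$, which follows by analytic continuation from a neighborhood of $0$ provided $|\chi_{\mu_{i}}|<1$ throughout $D_{\rho}$; since $\chi_{\mu_{i}}\to\chi$ uniformly and $\chi(0)=0$, this is arranged by shrinking $\rho$ once at the outset. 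With that sentence added, the proof is complete, and the use of $|S_{\mu_{i}}(0)|\geq 1$ to guarantee $\chi'(0)=S(0)\neq 0$ (hence $\mu\in M_{\ast}$ in the limit) is exactly the right way to rule out degeneration.
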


\begin{theorem}\label{circconv2}
Consider $\mu \in M_{\ast} \cap M_{\mathbb{T}}$ and let $\mu_{i} \in
M_{\mathbb{T}}$ for $i \in \mathbb{N}$.  If $\mu_{i}$ converge to
$\mu$ in the weak$^{\ast}$ topology, the $\mu_{i} \in M_{\ast} \cap
M_{\mathbb{T}}$ eventually and the functions $S_{\mu_{i}}$ converge
to $S_{\mu}$ uniformly in some neighborhood of zero.  Conversely, if
$\mu_{i} \in M_{\ast} \cap M_{\mathbb{T}}$ and $S_{\mu_{i}}$
converge to $S_{\mu}$ uniformly in some neighborhood of zero then
the measures $\mu_{i}$ converge to $\mu$ in the weak$^{\ast}$
topology.
\end{theorem}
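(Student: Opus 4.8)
The plan is to prove the two implications separately, using normal-family arguments for the analytic transforms and weak$^{\ast}$ compactness of $M_{\mathbb{T}}$ for the measures; recall throughout that for $\nu \in M_{\ast} \cap M_{\mathbb{T}}$ the function $\psi_{\nu}$ is analytic on $\{ |z| < 1 \}$ with $\psi_{\nu}(0) = 0$, $\psi'_{\nu}(0) = \int_{\mathbb{T}} t \, d\nu(t) \neq 0$, $\chi_{\nu} = \psi_{\nu}^{-1}$ near $0$, and $S_{\nu}(z) = (1+z)\chi_{\nu}(z)/z$ extends analytically across $0$ with $S_{\nu}(0) = 1/\psi'_{\nu}(0)$.

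For the forward implication, suppose $\mu_{i} \to \mu$ weakly. Since $t \mapsto t$ is bounded and continuous on $\mathbb{T}$, $\psi'_{\mu_{i}}(0) = \int_{\mathbb{T}} t \, d\mu_{i}(t) \to \int_{\mathbb{T}} t\, d\mu(t) = \psi'_{\mu}(0) \neq 0$, so $\mu_{i} \in M_{\ast}$ for all large $i$. For each fixed $z$ with $|z| < 1$, $t \mapsto zt/(1-zt)$ is bounded and continuous on $\mathbb{T}$, so $\psi_{\mu_{i}}(z) \to \psi_{\mu}(z)$ pointwise; since $|\psi_{\mu_{i}}(z)| \leq |z|/(1-|z|)$, the family $\{ \psi_{\mu_{i}} \}$ is normal on $\{ |z| < 1 \}$ and pointwise convergence upgrades to locally uniform convergence $\psi_{\mu_{i}} \to \psi_{\mu}$. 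Next I would fix $r > 0$ with $\psi_{\mu}$ univalent on $\{ |z| \leq r \}$ and $\psi'_{\mu}$ nonvanishing there, set $\rho = \mathrm{dist}(0, \psi_{\mu}(\{ |z| = r \})) > 0$, and apply Rouch\'e's theorem on $\{ |z| = r \}$: for $|w| < \rho/2$ and all large $i$, closeness of $\psi_{\mu_{i}}$ to $\psi_{\mu}$ on the circle forces $\psi_{\mu_{i}} - w$ to have a unique zero $\chi_{\mu_{i}}(w)$ in $\{ |z| < r \}$, which agrees with the analytic inverse near $0$; hence $\chi_{\mu_{i}}$ is defined on the fixed disk $\{ |w| < \rho/2 \}$ for all large $i$. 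These functions are uniformly bounded there, hence normal, and since $\chi_{\mu_{i}}(w)$ can only cluster to the unique solution $\chi_{\mu}(w)$ of $\psi_{\mu}(z) = w$, I get $\chi_{\mu_{i}} \to \chi_{\mu}$ locally uniformly on $\{ |w| < \rho/2 \}$. Then $S_{\mu_{i}}(z) = (1+z)\chi_{\mu_{i}}(z)/z \to S_{\mu}(z)$ on $\{ 0 < |z| < \rho/2 \}$, and since $S_{\mu_{i}}(0) = 1/\psi'_{\mu_{i}}(0) \to 1/\psi'_{\mu}(0) = S_{\mu}(0)$, the maximum principle promotes this to uniform convergence on a full neighborhood of $0$.

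For the converse, suppose $\mu_{i} \in M_{\ast} \cap M_{\mathbb{T}}$ and $S_{\mu_{i}} \to S_{\mu}$ uniformly near $0$. Since $\mathbb{T}$ is compact, $M_{\mathbb{T}}$ is weak$^{\ast}$ compact, so it suffices to show that every weak$^{\ast}$ cluster point of $\{ \mu_{i} \}$ equals $\mu$. If $\mu_{i_{k}} \to \nu \in M_{\mathbb{T}}$ weakly, then $\int_{\mathbb{T}} t\, d\nu(t) = \lim_{k} \int_{\mathbb{T}} t\, d\mu_{i_{k}}(t) = \lim_{k} \psi'_{\mu_{i_{k}}}(0) = \lim_{k} 1/S_{\mu_{i_{k}}}(0) = 1/S_{\mu}(0) = \psi'_{\mu}(0) \neq 0$, so $\nu \in M_{\ast}$. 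The forward implication applied along $(i_{k})$ yields $S_{\mu_{i_{k}}} \to S_{\nu}$ uniformly near $0$, so $S_{\nu} = S_{\mu}$ near $0$. But $S$ determines the measure: from $S_{\nu}$ one recovers $\chi_{\nu}(z) = z S_{\nu}(z)/(1+z)$, hence $\psi_{\nu} = \chi_{\nu}^{-1}$, whose Taylor coefficients at $0$ are the moments $\int_{\mathbb{T}} t^{n}\, d\nu(t)$ for $n \geq 1$; these, together with their conjugates and the total mass $1$, are the Fourier coefficients of $\nu$ and hence pin it down. Therefore $\nu = \mu$, completing the argument.

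The main obstacle is the domain-control issue in the forward implication: a priori the neighborhood of $0$ on which $\chi_{\mu_{i}}$ (hence $S_{\mu_{i}}$) is defined could shrink with $i$, so one must produce a single disk on which all large-$i$ transforms live and converge, which is exactly what the Rouch\'e/Hurwitz step accomplishes. Everything else --- normal families, the correspondence $\psi_{\nu} \leftrightarrow \chi_{\nu} \leftrightarrow S_{\nu}$, and weak$^{\ast}$ compactness of $M_{\mathbb{T}}$ --- is routine.
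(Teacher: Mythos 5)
Your proof is correct. Note that the paper does not prove this statement at all: it is recorded as a background result imported from the cited references (\cite{V2}, \cite{BV2}, \cite{Bel}), so there is no internal proof to compare against; your argument --- weak$^{\ast}$ convergence giving locally uniform convergence of $\psi_{\mu_{i}}$ via Montel, the Rouch\'e step to pin down a common disk of definition for the inverses $\chi_{\mu_{i}}$, and the converse via weak$^{\ast}$ compactness of $M_{\mathbb{T}}$ together with the fact that $S_{\nu}$ determines the moments $\int_{\mathbb{T}} t^{n}\, d\nu(t)$ and hence $\nu$ --- is essentially the standard proof found in those sources, and the domain-control issue you flag is indeed the only nontrivial point.
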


\begin{theorem}\label{circarray}
Let $c_{n} \in \mathbb{T}$ be a sequence of numbers and $\{
\mu_{n,j} \}_{n \in \mathbb{N} , \ j = 1,\ldots, k_{n}}$ be and
array of probability measures in $M_{\mathbb{T}}$ such that $\lim_{n
\uparrow \infty} \max_{j = 1, \ldots , k_{n}} \mu_{n,j} (\{z : \
|z-1| < \epsilon \}) = 1$ for every $\epsilon > 0$.  If the measures
$\delta_{c_{n}} \boxtimes \mu_{n,1} \boxtimes \cdots \boxtimes
\mu_{n, k_{n}}$ have a weak limit $\mu$, then $\mu$ is
$\boxtimes$-infinitely divisible.
\end{theorem}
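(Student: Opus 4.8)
The plan is to carry out the classical method of \emph{accompanying infinitely divisible laws} in the multiplicative setting, using that $\boxtimes$ is \emph{exactly} multiplicative on $S$-transforms, so that passing to $\log S$ turns the array into an additive one. I would treat the main case $\mu\in M_*\cap M_\mathbb{T}$; the degenerate case $\mu\notin M_*$, in which $\mu$ turns out to be $\boxtimes$-idempotent and hence trivially $\boxtimes$-infinitely divisible, is disposed of by a separate, more elementary argument. First, recast the hypothesis: $\lim_n\max_j\mu_{n,j}(\{z:|z-1|<\epsilon\})=1$ for all $\epsilon>0$ says precisely that the $n$-th row tends weakly to $\delta_1$ uniformly in $j$, since the balls $\{|z-1|<\epsilon\}$ form a neighbourhood basis of $1$ in the compact group $\mathbb{T}$. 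By Theorem \ref{circconv2}, applied uniformly over the rows, this forces $\mu_{n,j}\in M_*\cap M_\mathbb{T}$ for $n$ large and $S_{\mu_{n,j}}\to 1=S_{\delta_1}$ uniformly on a fixed neighbourhood $U$ of $0$, uniformly in $j$; in particular $S_{\mu_{n,j}}$ is zero-free on $U$, so $\log S_{\mu_{n,j}}$ is well defined and small there, with $\max_j\sup_U|\log S_{\mu_{n,j}}|\to 0$. Since $S_{\delta_{c_n}}=\bar c_n$ is a unimodular constant, after shrinking $U$ so that the $S$-transform of each partial $\boxtimes$-product is defined on it, we obtain $S_{\delta_{c_n}\boxtimes\mu_{n,1}\boxtimes\cdots\boxtimes\mu_{n,k_n}}(z)=\bar c_n\exp\bigl(\sum_{j=1}^{k_n}\log S_{\mu_{n,j}}(z)\bigr)$, and by Theorem \ref{circconv2} this converges to $S_\mu(z)$ uniformly on $U$.

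The heart of the proof is to replace each factor $\mu_{n,j}$ by an accompanying $\boxtimes$-infinitely divisible law $\tilde\mu_{n,j}$. Since $\mu_{n,j}$ is close to $\delta_1$, one takes $\tilde\mu_{n,j}$ to be the free multiplicative analogue of a compound Poisson law — equivalently, the $\boxtimes$-infinitely divisible measure on $\mathbb{T}$ produced by the Bercovici--Voiculescu description of such laws through their $S$-transforms, with the Herglotz datum read off from $\log S_{\mu_{n,j}}$ — chosen so that $\log S_{\tilde\mu_{n,j}}(z)=\log S_{\mu_{n,j}}(z)+O(|\log S_{\mu_{n,j}}(z)|^2)$ on $U$, with a constant uniform in $n,j$. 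Then $\tilde\mu_n:=\delta_{c_n}\boxtimes\tilde\mu_{n,1}\boxtimes\cdots\boxtimes\tilde\mu_{n,k_n}$ is $\boxtimes$-infinitely divisible, because $\delta_{c_n}$ and each $\tilde\mu_{n,j}$ are, and a $\boxtimes$-product of $\boxtimes$-infinitely divisible measures is again one. Comparing $S$-transforms, $S_{\tilde\mu_n}(z)\big/S_{\delta_{c_n}\boxtimes\mu_{n,1}\boxtimes\cdots}(z)=\exp\bigl(\sum_j(\log S_{\tilde\mu_{n,j}}(z)-\log S_{\mu_{n,j}}(z))\bigr)$, and the exponent is bounded by $C\sum_j|\log S_{\mu_{n,j}}(z)|^2\le C\bigl(\max_j|\log S_{\mu_{n,j}}(z)|\bigr)\cdot\sum_j|\log S_{\mu_{n,j}}(z)|$; the first factor vanishes by infinitesimality, and the second must be shown bounded on $U$ uniformly in $n$. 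With that in hand, $S_{\tilde\mu_n}(z)\to S_\mu(z)$ uniformly on $U$, so by Theorem \ref{circconv2} (or Lemma \ref{circconv}) $\tilde\mu_n\to\mu$ weakly.

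Finally, it suffices to know that the class of $\boxtimes$-infinitely divisible measures inside $M_*\cap M_\mathbb{T}$ is closed under weak limits, which I would derive from Lemma \ref{circconv}: fix $m\in\mathbb{N}$; each $\tilde\mu_n$ admits an $m$-th $\boxtimes$-root $\tilde\mu_n^{1/m}$, whose $S$-transform is a branch of $(S_{\tilde\mu_n})^{1/m}$ on $U$, and since $S_\mu$ is zero-free near $0$ (indeed $|S_\mu(0)|\ge 1$, as recorded in the excerpt) one can choose these branches consistently so that $S_{\tilde\mu_n^{1/m}}\to(S_\mu)^{1/m}$ uniformly on $U$. By Lemma \ref{circconv} the limit equals $S_\rho$ for some $\rho\in M_*\cap M_\mathbb{T}$, and then $S_{\rho^{\boxtimes m}}=(S_\rho)^m=S_\mu$ on $U$; by injectivity of the $S$-transform on $M_*\cap M_\mathbb{T}$ this gives $\rho^{\boxtimes m}=\mu$. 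Since $m$ was arbitrary, $\mu$ is $\boxtimes$-infinitely divisible.

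I expect the main obstacle to be the accompanying-laws step together with the uniform boundedness of $\sum_j|\log S_{\mu_{n,j}}|$ on $U$: the bound is immediate for the real part, since $\bigl|\exp(\sum_j\log S_{\mu_{n,j}})\bigr|=|S_{\delta_{c_n}\boxtimes\cdots}|$ is bounded above and below near $0$, but the imaginary part amounts to ruling out ``winding'' of the partial products $\mu_{n,1}\boxtimes\cdots\boxtimes\mu_{n,l}$ around $0$ — which is precisely why the shifts $\delta_{c_n}$ are present — and the construction of the $\tilde\mu_{n,j}$ with the quadratic comparison estimate requires the explicit parametrization of $\boxtimes$-infinitely divisible laws on $\mathbb{T}$. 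Everything else is bookkeeping with $S$-transforms.
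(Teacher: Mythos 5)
The paper does not prove Theorem \ref{circarray}; it is quoted as a known result (attributed to \cite{V2}, \cite{BV2}, \cite{Bel}), so there is no internal proof to compare against. Your strategy --- accompanying $\boxtimes$-infinitely divisible laws on the level of $\log S$, followed by closedness of the infinitely divisible class under weak limits --- is indeed the standard route taken in the literature, and your final step (extracting $m$-th roots via branches of $S^{1/m}$ and Lemma \ref{circconv}) is essentially sound. But the central quantitative step, which you yourself flag as the ``main obstacle,'' is not merely unfinished: as stated it is false. The quantity $\sum_{j}|\log S_{\mu_{n,j}}(z)|$ need not be bounded uniformly in $n$. Take $\mu_{n,j}=\delta_{\exp(i/\sqrt{k_n})}$ for $j=1,\dots,k_n$ with $k_n\to\infty$ and $c_n=\exp(-i\sqrt{k_n})$: the array is infinitesimal, the shifted products are identically $\delta_1$, yet $\log S_{\mu_{n,j}}\equiv -i/\sqrt{k_n}$, so $\sum_j|\log S_{\mu_{n,j}}|=\sqrt{k_n}\to\infty$ and $\sum_j|\log S_{\mu_{n,j}}|^2\equiv 1$. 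Hence your error bound $\bigl(\max_j|\log S_{\mu_{n,j}}|\bigr)\sum_j|\log S_{\mu_{n,j}}|$ does not tend to $0$, and an accompanying law specified only by $\log S_{\tilde\mu_{n,j}}=\log S_{\mu_{n,j}}+O(|\log S_{\mu_{n,j}}|^2)$ need not produce products converging to $\mu$. The single global rotation $\delta_{c_n}$ controls only the total phase $\sum_j\arg S_{\mu_{n,j}}$ modulo $2\pi$, not the sum of absolute values of the individual phases, so it cannot rescue this estimate. The correct fix is to recentre each factor separately: write $\mu_{n,j}=\delta_{b_{n,j}}\boxtimes\mu_{n,j}^{\circ}$ with $b_{n,j}$ the phase of the first moment of $\mu_{n,j}$, take $\tilde\mu_{n,j}=\delta_{b_{n,j}}\boxtimes(\text{free compound Poisson built from }\mu_{n,j}^{\circ})$, and control the quadratic error by $\int_{\mathbb T}|1-\zeta|^2\,d\mu_{n,j}^{\circ}(\zeta)$, whose sum over $j$ \emph{is} uniformly bounded --- this follows from the real-part argument you gesture at, once one notes $|S_{\nu}(z)|\geq 1$ near $0$ for every $\nu\in M_*\cap M_{\mathbb T}$ (a Schwarz-lemma consequence of $\eta_\nu=\psi_\nu/(1+\psi_\nu)$ mapping the disk to itself), so that the terms $\log|S_{\mu_{n,j}}(z)|$ are nonnegative and their sum telescopes against the convergent product.

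Two further points need attention. First, you never construct the accompanying laws: their existence with the stated comparison estimate is asserted, and it requires the Bercovici--Voiculescu parametrization $\Sigma_\nu=\exp\bigl(-i\alpha+\int_{\mathbb T}\tfrac{1+\zeta z}{1-\zeta z}\,d\sigma(\zeta)\bigr)$ together with an explicit Poissonization; this is the substance of the theorem, not bookkeeping. Second, the degenerate case $\mu\notin M_*$ is dismissed too quickly: a measure on $\mathbb T$ with vanishing first moment is not automatically $\boxtimes$-idempotent (e.g.\ $(\delta_1+\delta_{-1})/2$), so one must actually prove that when $\prod_j|m_1(\mu_{n,j})|\to 0$ the products converge to Haar measure, which is the unique idempotent and is infinitely divisible.
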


\section{Main Results for Measures Supported on the Unit Circle}

The last case considered are measures $\mu \in M_{\mathbb{T}} \cap
M_{\ast}$ where $M_{\mathbb{T}}$ are those probability measures
supported on the complex circle and $M_{\ast}$ are those probability
measures with non-zero first moment.  Observe that our
decompositions will be supported on the unit circle so that a family
of decompositions $\mu = \mu_{1,k} \boxtimes \mu_{2,k}$ are
trivially tight.

\begin{theorem}
Let $\mu \in M_{\mathbb{T}} \cap M_{\ast}$ have the property that,
for any non-trivial decomposition $\mu = \nu \boxtimes \omega$ with
$\nu , \omega \in M_{\mathbb{T}} \cap M_{\ast} $, neither $\nu$ nor
$\omega$ is indecomposable. Then $\mu$ is $\boxtimes$-infinitely
divisible.
\end{theorem}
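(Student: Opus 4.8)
The plan is to carry the proof of Theorem~\ref{id} over to the unit circle, replacing the additive functional $\Lambda$ there by $\Lambda(\nu):=\log|S_{\nu}(0)|$ and Theorem~\ref{pataid} by Theorem~\ref{circarray}. First I would record two structural facts. Because the first moment of a free multiplicative convolution is the product of the first moments, every $\boxtimes$-divisor $\nu$ of $\mu\in M_{\ast}\cap M_{\mathbb{T}}$ again lies in $M_{\ast}\cap M_{\mathbb{T}}$; in particular $S_{\nu}$ is analytic near $0$, and from $S_{\mu}(0)=S_{\nu}(0)S_{\rho}(0)$ together with $|S_{\rho}(0)|\geq 1$ one gets $1\leq|S_{\nu}(0)|\leq|S_{\mu}(0)|$. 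Second, the hypothesis passes to divisors: if $\mu=\sigma\boxtimes\pi$ and $\sigma=\sigma_{1}\boxtimes\sigma_{2}$ were a nontrivial decomposition with $\sigma_{1}$ indecomposable, then $\mu=\sigma_{1}\boxtimes(\sigma_{2}\boxtimes\pi)$ would be a nontrivial decomposition of $\mu$ (a $\boxtimes$-product of measures on $\mathbb{T}$ is a point mass only if both factors are) with an indecomposable component, contradicting the hypothesis on $\mu$.

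Next I would set up the functional. On the family $D$ of $\boxtimes$-divisors of $\mu$ put $\Lambda(\nu)=\log|S_{\nu}(0)|$. Then $\Lambda(\nu\boxtimes\rho)=\Lambda(\nu)+\Lambda(\rho)$, $\Lambda(\nu\boxtimes\delta_{c})=\Lambda(\nu)$ for $c\in\mathbb{T}$, and $0\leq\Lambda(\nu)\leq\Lambda(\mu)$; by Lemma~\ref{circmult0}, $\Lambda(\nu)=0$ if and only if $\nu$ is a point mass. Since $\mathbb{T}$ is compact, $D$ is automatically tight; moreover along any weakly convergent sequence in $D$ the first moments (of modulus $\geq|S_{\mu}(0)|^{-1}>0$) converge to the first moment of the limit, so the limit again lies in $M_{\ast}$, and then Theorem~\ref{circconv2} gives $S_{\nu_{k}}(0)\to S_{\nu}(0)$. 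Hence $\Lambda$ and $\boxtimes$ are weak$^{\ast}$-continuous on the weakly compact set $\overline{D}$. Finally, whenever a divisor is extracted I would normalize it by a (unique) rotation $\boxtimes\delta_{c}$ so that $S(0)>0$, equivalently so that its first moment is a positive real.

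With this in place the argument duplicates that of Theorem~\ref{id}. (i) For every $\epsilon>0$ there is a nontrivial decomposition $\mu=\mu_{1}\boxtimes\mu_{2}$ with $0<\Lambda(\mu_{1})<\epsilon$: otherwise the infimum $\alpha>0$ of $\Lambda(\mu_{1})$ over nontrivial decompositions is attained by compactness and continuity, but the minimizer $\mu_{1}$ has, by hypothesis, a nontrivial decomposition $\nu_{0}\boxtimes\nu_{1}$ with $0<\Lambda(\nu_{i})<\alpha$, contradicting minimality. (ii) Using that divisors inherit the hypothesis, $\Lambda$ then attains every value of $(0,\Lambda(\mu))$ on $D$: the supremum of attained values $\leq t$ is positive and attained, and were it some $\alpha<t$ one could split a chunk of $\Lambda$-value less than $t-\alpha$ off the complementary factor, a contradiction. (iii) Inductively peeling off pieces of $\Lambda$-value $\Lambda(\mu)/n$ and normalizing each, one obtains for every $n$ a decomposition $\mu=\delta_{c_{n}}\boxtimes\mu_{n,1}\boxtimes\cdots\boxtimes\mu_{n,n}$ with $\Lambda(\mu_{n,j})=\Lambda(\mu)/n$, each $\mu_{n,j}$ normalized, and $c_{n}\in\mathbb{T}$ absorbing the accumulated rotations.

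It remains to see that the array $\{\mu_{n,j}\}$ converges to $\delta_{1}$ uniformly in $n$. It is tight, and any weak cluster point $\nu$ lies in $M_{\ast}$; by continuity of $\Lambda$ and $\Lambda(\mu_{n,j})=\Lambda(\mu)/n\to 0$ it satisfies $|S_{\nu}(0)|=1$, so by Lemma~\ref{circmult0} it is a point mass, and the normalization forces $\nu=\delta_{1}$. Compactness then yields $\mu_{n,j}\to\delta_{1}$ uniformly in $n$, so the array is uniformly concentrated at $1$ in the sense required by Theorem~\ref{circarray}, which, with $c_{n}\in\mathbb{T}$, shows $\mu$ is $\boxtimes$-infinitely divisible (if $\mu$ is itself a point mass there is nothing to prove, so one may assume $\Lambda(\mu)>0$ throughout). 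The one place where genuine care is needed — the main obstacle — is keeping every object produced inside $M_{\ast}\cap M_{\mathbb{T}}$ and controlling the extra unimodular constants; this is exactly what multiplicativity of first moments and Lemma~\ref{circmult0} supply, and it also lets Lemma~\ref{circmult0} replace, in one line, the area-integral and derivative-bound step used in Theorem~\ref{id} to pass from $\Lambda=0$ to a point mass. Everything else is a line-by-line translation, with Theorem~\ref{circconv2} playing the role of Lemma~\ref{BVconv}.
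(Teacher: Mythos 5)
Your proposal is correct and follows essentially the same route as the paper: the paper also uses the functional $\nu\mapsto S_{\nu}(0)$ (you take its log-modulus, a cosmetic change), normalizes by rotations $\delta_{c}$, runs the same infimum/intermediate-value/equal-division argument as in Theorem \ref{id}, identifies cluster points via Lemma \ref{circmult0} and Theorem \ref{circconv2}, and concludes with Theorem \ref{circarray}. You have merely written out in full the details the paper omits as being ``extremely similar'' to the earlier cases.
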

\begin{proof}
Let $\Lambda : M_{\mathbb{T}} \rightarrow \mathbb{C}$ be defined by
$\Lambda (\nu) = S_{\nu}(0)$.  Observe that $| \Lambda (\mu) | \geq
1$ with equality if and only if $\mu$ is a Dirac mass situated on
the circle. We may then assume that $|\Lambda (\mu)| = 1 + \alpha >
1$. In a manner analogous to Theorems \ref{id} and \ref{multid}, for
every $ \alpha
> \epsilon
> 0$, there exists a nontrivial decomposition $\mu = \nu \boxtimes \omega$ such
that $|\Lambda(\nu)| < 1 + \epsilon$.  Through a similar maximality
argument, one can show that for every $n \in \mathbb{N}$ there
exists a decomposition $\mu = \mu_{n,1} \boxtimes \cdots \boxtimes
\mu_{n,n}$ such that $|\Lambda (\mu_{n,i})| =
\sqrt[n]{|\Lambda(\mu)|}$ for all $i = 1, 2, \ldots, n$.  We forgo
the proof due to extreme similarity to the first two cases.

Now, observe that $\Lambda (\mu_{n,i} \boxtimes \delta_{c}) =
\Lambda(\mu_{n,i}) / c$ for $c \in \mathbb{T}$.  Thus, we may assume
that $\mu = \delta_{c_{n}} \boxtimes \mu_{n,1} \boxtimes \cdots
\boxtimes \mu_{n,n}$ for all $n \in \mathbb{N}$ where we
additionally assume that $\Lambda (\mu_{n,i}) =
\sqrt[n]{|\Lambda(\mu)|}$.

Note that $\{ \mu_{n,j} \}_{n \in \mathbb{N} , j = 1,2, \ldots, n}$
forms a tight array since all of our measures are compactly
supported. Further observe that, by Theorem \ref{circconv2} any
cluster point $\nu$ of this array satisfies $\Lambda (\nu) = 1$. By
Lemma \ref{circmult0}, this implies that $\nu = \delta_{1}$.
Tightness implies that our array converges to $\delta_{1}$ uniformly
over $n$. By Theorem \ref{circarray}, this implies
$\boxtimes$-infinite divisibility.
\end{proof}

We close with our Khinthine decomposition for measures in
$M_{\mathbb{T}}$.  Several steps of the proof are indistinguishable
from Theorem \ref{multdecomp} so are not presented in full detail.

\begin{theorem}
Let $\mu \in M_{\mathbb{T}} \cap M_{\ast}$ be a probability measure.
There exists a decomposition $\mu = \mu_{0} \boxtimes \mu_{1}
\boxtimes \mu_{2} \boxtimes \cdots$ such that $\mu_{i} \in
M_{\mathbb{T}} \cap M_{\ast}$ for all $i = 0, 1, 2, \ldots$,
$\mu_{0}$ is infinitely divisible and $\mu_{i}$ is indecomposable
for $i = 1,2, \ldots$.  Such a decomposition need not be unique.
\end{theorem}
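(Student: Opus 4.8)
The plan is to transcribe the proof of Theorem~\ref{multdecomp}, with the interval-point normalization $S_\nu(-\beta)=1$ (which has no analogue here, since the domain of $S_\nu$ is known only to be \emph{some} neighbourhood of $0$ in $\mathbb{C}$) replaced by a normalization at the origin. Set $\Lambda(\nu):=S_\nu(0)=1/\int_\mathbb{T}\zeta\,d\nu(\zeta)$; by multiplicativity of the $S$-transform this is multiplicative under $\boxtimes$, it satisfies $\Lambda(\nu\boxtimes\delta_c)=\Lambda(\nu)/c$, and $|\Lambda(\nu)|\ge 1$ with equality iff $\nu$ is a point mass (Lemma~\ref{circmult0}). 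Hence $\log|\Lambda(\nu)|\ge 0$ is an additive, rotation-invariant functional vanishing exactly on Dirac masses --- the precise analogue of the area functional of Section~4 and of $-\log S_\nu(-\gamma)$ in Theorem~\ref{multdecomp}. After rotating $\mu$ we may assume $\Lambda(\mu)\in[1,\infty)$ (and if $\Lambda(\mu)=1$ then $\mu=\delta_1$ and we are done); let $M^0$ be the class of $\nu\in M_\mathbb{T}\cap M_\ast$ with $\Lambda(\nu)\in[1,\infty)$. Every $\nu\in M_\mathbb{T}\cap M_\ast$ has a unique rotate in $M^0$, and the set of $\boxtimes$-divisors of $\mu$ lying in $M^0$ is weak$^\ast$ compact: measures on $\mathbb{T}$ are automatically tight, and for a divisor $\nu$ of $\mu$ one has $1\le|\Lambda(\nu)|\le|\Lambda(\mu)|$, i.e. $|\int\zeta\,d\nu|\ge|\Lambda(\mu)|^{-1}>0$, so a weak$^\ast$ limit of such $\nu$ stays in $M_\ast$ and (by Theorem~\ref{circconv2}) $\Lambda$ is weak$^\ast$ continuous along it.

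With this set-up the greedy extraction is verbatim that of Theorem~\ref{multdecomp}. If $\mu$ is $\boxtimes$-infinitely divisible we are done; otherwise, by the preceding theorem in contrapositive form, $\mu$ admits a nontrivial decomposition one of whose factors is indecomposable (both factors automatically lie in $M_\ast\cap M_\mathbb{T}$). Put $\alpha_0=\sup\log|\Lambda(\rho)|$ over indecomposable $\rho\in M^0$ dividing $\mu$, pick an indecomposable $\mu_1\in M^0$ with $\mu=\mu_{0,1}\boxtimes\mu_1$ and $\log|\Lambda(\mu_1)|>\alpha_0/2$, and iterate: at stage $n$, $\mu_{0,n-1}=\mu_{0,n}\boxtimes\mu_n$ with $\mu_n\in M^0$ indecomposable and $\log|\Lambda(\mu_n)|>\alpha_{n-1}/2$, where $\alpha_{n-1}=\sup\log|\Lambda(\rho)|$ over indecomposable $\rho\in M^0$ dividing $\mu_{0,n-1}$; the requisite rotations of the two new factors keep them in $M^0$ automatically once $\mu_{0,n-1}\in M^0$. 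If ever $\alpha_n=0$ the preceding theorem finishes the proof, so assume $\alpha_n>0$ for all $n$, and write $\nu_n=\mu_1\boxtimes\cdots\boxtimes\mu_n$, $\nu_{n,m}=\mu_{m+1}\boxtimes\cdots\boxtimes\mu_n$.

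Convergence of the pieces again follows Theorem~\ref{multdecomp}. Since $\log|\Lambda(\mu_{0,n})|$ is nonincreasing and $\ge 0$, the series $\sum_i\log|\Lambda(\mu_i)|=\log|\Lambda(\mu)|-\lim_n\log|\Lambda(\mu_{0,n})|$ converges; hence $\alpha_n\to 0$ and $\log|\Lambda(\nu_{n,m})|\to 0$ uniformly in $n$ as $m\to\infty$. By weak$^\ast$ compactness of the $M^0$-divisors of $\mu$ together with Lemma~\ref{circmult0}, every cluster point of $\{\nu_{n,m}\}$ as $m\to\infty$ is $\delta_1$, so $\nu_{n,m}\to\delta_1$ uniformly in $n$; a routine Cauchy argument (using Lemma~\ref{multcont}) then shows that $\nu_n$ converges and that $\nu_{\infty,m}:=\lim_n\nu_{n,m}$ exists in $M^0$. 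Taking $\mu_0$ to be any cluster point of $\{\mu_{0,n}\}$, the computations of Theorem~\ref{multdecomp} (passing to a subsequence along which $\mu_{0,i_k}\to\mu_0$ and using the $\delta_1$-limits) give $\mu=\lim_n\mu_0\boxtimes\nu_n$ and $\mu_{0,n}=\mu_0\boxtimes\nu_{\infty,n+1}$ for every $n$. Finally, if $\mu_0$ had a nontrivial decomposition $\mu_0=\nu\boxtimes\rho$ with $\nu$ indecomposable, normalize $\nu\in M^0$ (so $\log|\Lambda(\nu)|>0$, by Lemma~\ref{circmult0}) and pick $n$ with $\alpha_n<\log|\Lambda(\nu)|$; then $\mu_{0,n}=\mu_0\boxtimes\nu_{\infty,n+1}=\nu\boxtimes(\rho\boxtimes\nu_{\infty,n+1})$ exhibits $\nu$ as an indecomposable $M^0$-divisor of $\mu_{0,n}$ with $\log|\Lambda(\nu)|>\alpha_n$, contradicting the definition of $\alpha_n$. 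So $\mu_0$ has no such decomposition and is $\boxtimes$-infinitely divisible by the preceding theorem; undoing the initial rotation of $\mu$ (absorbing it into $\mu_0$) yields the asserted decomposition, and non-uniqueness is as in the additive and positive-half-line cases.

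The real obstacle is bookkeeping rather than analysis: the $S$-transform exists only on $M_\ast$, which is not weak$^\ast$ closed, and there is a circle's worth of unimodular scalar factors $\delta_c$ to keep track of. Both are handled by working inside the rigidly normalized class $M^0$ and noting that divisors of a fixed $\mu$ have first moments bounded away from $0$, so that the relevant weak$^\ast$ limits remain in $M_\ast$ and $\Lambda$, Lemma~\ref{circmult0}, and Theorem~\ref{circconv2} all remain applicable. Once that compactness is secured, every remaining step is a transcription of the proof of Theorem~\ref{multdecomp}.
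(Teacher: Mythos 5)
Your proposal is correct and follows essentially the same route as the paper: the same functional $\Lambda(\nu)=S_{\nu}(0)$ (you work with $\log|\Lambda|$ to make it additive, which is only a cosmetic change), the same greedy extraction of indecomposable divisors, and the same tail/cluster-point argument showing $\nu_{n,m}\to\delta_{1}$ and $\mu_{0,n}=\mu_{0}\boxtimes\nu_{\infty,n+1}$. Your explicit observation that divisors of $\mu$ have first moments bounded below by $|\Lambda(\mu)|^{-1}$, so that weak$^{\ast}$ limits stay in $M_{\ast}$, is a worthwhile point the paper leaves implicit.
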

\begin{proof}
In a manner entirely analogous with the previous cases, for all $n
\in \mathbb{N}$, we construct a decomposition
$$ \mu = \mu_{0,n} \boxtimes \mu_{1} \boxtimes \cdots \boxtimes
\mu_{n}$$  with the following properties:
\begin{enumerate}
\item
The measure $\mu_{i} \in M_{\mathbb{T}}$ is indecomposable for all
$i \in \mathbb{N}$.
\item
Let $\alpha_{i-1} = \sup |\Lambda(\nu)|$ where the supremum is taken
over all indecomposable measures $\nu \in M_{\mathbb{T}}$ satisfying
$\mu_{0,1} = \nu \boxtimes \rho$ for some $\rho \in M_{\mathbb{T}}$.
We have that $1 \leq \Lambda(\mu_{i}) < \sqrt{\alpha_{i}}$ (in
particular, we may assume that $\Lambda(\mu_{i})$ is real).
\end{enumerate}

We again define $\nu_{n}$ , $\nu_{n,m}$ and $\nu_{\infty , m}$ as in
the proof of Theorem \ref{multdecomp}.  That is
$$\nu_{n} = \mu_{1} \boxtimes \cdots \boxtimes \mu_{n}$$
$$\nu_{n,m} = \mu_{m+1} \boxtimes \cdots \boxtimes \mu_{n} $$
$$\nu_{\infty , m} = \lim_{n \uparrow \infty} \mu_{m+1} \boxtimes \cdots \boxtimes \mu_{n}$$
Observe that tightness is trivial in this case since
$M_{\mathbb{T}}$ is compact. We then have that $\Lambda(\mu) =
\Lambda(\mu_{0,n}) \ast \Lambda(\nu_{n}) = \Lambda(\mu_{0,n}) \ast
\Lambda(\nu_{m}) \ast \Lambda(\nu_{n,m})$. Since
$\Lambda_{\mu_{0,n}}$ is decreasing and bounded as $n \uparrow
\infty$, this is a convergent sequence.  This implies that
$\nu_{n,m}$ represents the tail of a convergent product so that it
goes to $0$ as $m \uparrow \infty$ (this implies that $\alpha_{i}
\rightarrow 1$).  Thus, $\{ \nu_{n,m} \}_{m < n \in \mathbb{N}}$ is
tight and any cluster point $\nu$ of a subsequence with unbounded
$m$ must satisfy $\Lambda(\nu) = 1$.  By \ref{circmult0}, $\nu =
\delta_{1}$.  This implies that $\nu_{n,m} \rightarrow \delta_{1}$
uniformly as $m \uparrow \infty$.

Once again, we let $\mu_{0}$ be a cluster point of $\{ \mu_{0,n}
\}_{n \in \mathbb{N}}$.  In the same manner as in Theorem
\ref{multdecomp}, we have that $\mu_{0} \boxtimes \nu_{n}
\rightarrow \mu$ as $n \uparrow \infty$.

The theorem is proved when we can show that $\mu_{0}$ is infinitely
divisible.  It is again true that $\mu_{0} \boxtimes \nu_{\infty:m}
= \mu_{0, m-1}$ with no deviation from the previous proof. Our
result then follows by the same line of reasoning as Theorem
$\ref{multdecomp}$.
\end{proof}

\section{Applications}\label{applicationsection}

We begin by extending the class of $\boxplus$-indecomposable
measures.

\begin{theorem}
Let $\mu$ be a measure with the property that the left and right
endpoints of the support of $\mu$ are Dirac masses.  Then $\mu$ is
indecomposable.
\end{theorem}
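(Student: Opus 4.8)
The plan is to argue by contradiction using the atom structure result of Theorem~\ref{belmaintheorem}(1). Suppose $\mu = \mu_1 \boxplus \mu_2$ is a nontrivial decomposition, so neither $\mu_i$ is a Dirac mass. Let $a = \inf(\textrm{supp}(\mu))$ and $b = \sup(\textrm{supp}(\mu))$; by hypothesis $\mu(\{a\}) > 0$ and $\mu(\{b\}) > 0$. First I would invoke Lemma~\ref{compact2}, or rather its one-sided consequence used in the proof of Theorem~\ref{tightcompact} via the positivity result (Theorem~4.16 in \cite{BV1}): if $a_i = \inf(\textrm{supp}(\mu_i))$ and $b_i = \sup(\textrm{supp}(\mu_i))$, then $\textrm{supp}(\mu) \subseteq [a_1 + a_2, b_1 + b_2]$, and in fact $a = a_1 + a_2$ and $b = b_1 + b_2$ (the endpoints are attained because $a$ and $b$ lie in $\textrm{supp}(\mu)$, forcing the extreme points of the summand supports to add up exactly).

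Next I would show that $a_i$ and $b_i$ are themselves atoms of $\mu_i$. The key point is the interplay between Theorem~\ref{belmaintheorem}(1) and the compact-support structure: since $a = a_1 + a_2$ is an atom of $\mu \boxplus$-decomposed as above, the characterization says there exist $b', c'$ with $a = b' + c'$, $\mu_1(\{b'\}) + \mu_2(\{c'\}) > 1$, and $(\mu\boxplus\nu)(\{a\}) = \mu_1(\{b'\}) + \mu_2(\{c'\}) - 1$. Because $b' \in \textrm{supp}(\mu_1) \subseteq [a_1, b_1]$ and $c' \in \textrm{supp}(\mu_2) \subseteq [a_2, b_2]$ with $b' + c' = a = a_1 + a_2$, we are forced to have $b' = a_1$ and $c' = a_2$; hence $\mu_1(\{a_1\}) + \mu_2(\{a_2\}) > 1$, so in particular both $\mu_1(\{a_1\}) > 0$ and $\mu_2(\{a_2\}) > 0$. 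The same argument at the right endpoint gives $\mu_1(\{b_1\}) + \mu_2(\{b_2\}) > 1$.

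Now I would derive the contradiction by adding the two strict inequalities:
\[
\bigl(\mu_1(\{a_1\}) + \mu_1(\{b_1\})\bigr) + \bigl(\mu_2(\{a_2\}) + \mu_2(\{b_2\})\bigr) > 2.
\]
Since $\mu_i$ is not a Dirac mass, if $a_i = b_i$ then $\mu_i$ would be supported at a single point, i.e. a Dirac mass — contradiction; so $a_i < b_i$ and therefore $\mu_i(\{a_i\}) + \mu_i(\{b_i\}) \le 1$ for each $i = 1,2$. Summing gives a quantity $\le 2$, contradicting the strict inequality $> 2$ above. Hence no nontrivial decomposition exists and $\mu$ is indecomposable.

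The main obstacle I anticipate is the step identifying $b' = a_1$, $c' = a_2$ rigorously: one must be careful that the endpoints of the summand supports genuinely add to the endpoint of $\textrm{supp}(\mu)$ and that no ``interior'' choice of $b', c'$ can sum to $a$. This requires knowing $\textrm{supp}(\mu) = [a_1+a_2, b_1+b_2]$ is sharp at the endpoints, which follows from the positivity argument of Lemma~\ref{compact2} applied on both sides (the random variables $x_1 - a_1 + x_2 - a_2 \ge 0$ and $b_1 + b_2 - x_1 - x_2 \ge 0$ have $a$ and $b$ in the closure of their spectra). A secondary subtlety is ensuring $\mu$ has compact support to invoke these tools; if the statement is meant for general $\mu$, one should first note that ``left and right endpoints of the support are Dirac masses'' presupposes the support has finite endpoints, and the argument above only uses the local structure near those two atoms, so Theorem~\ref{belmaintheorem} applies directly without a global compactness hypothesis.
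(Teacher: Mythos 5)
Your argument has a genuine gap at exactly the step you flagged: the identification of the atoms $b',c'$ produced by Theorem \ref{belmaintheorem}(1) with the support endpoints $a_1,a_2$ of the factors. Lemma \ref{compact2} (equivalently the positivity of $x_1-a_1+x_2-a_2$) only yields $\textrm{supp}(\mu)\subseteq[a_1+a_2,\,b_1+b_2]$, i.e.\ $a\geq a_1+a_2$; it does not give $a\leq a_1+a_2$, so ``$a$ lies in the support'' does not force the extreme points to add up exactly. Indeed, for free convolution the endpoints of $\textrm{supp}(\mu_1\boxplus\mu_2)$ are in general \emph{strictly} inside $[a_1+a_2,b_1+b_2]$: for $\mu_1=\mu_2=p\delta_0+(1-p)\delta_1$ with $p>1/2$, the right endpoint of $\mu_1\boxplus\mu_2$ is $1+2\sqrt{p(1-p)}<2$. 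So from $b'+c'=a\geq a_1+a_2$ with $b'\geq a_1$, $c'\geq a_2$ you cannot conclude $b'=a_1$ and $c'=a_2$, and without that you cannot assert $a'\neq b'$-type distinctness from ``$\mu_i$ is not a Dirac mass.'' The final contradiction collapses, because it is entirely possible (as far as your argument shows) that the two atoms of one factor coincide, in which case $\mu_i(\{a_i\})+\mu_i(\{b_i\})\leq 1$ fails for that factor.

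The paper's proof shares your summation inequality but sidesteps the endpoint identification. It takes $a_1,a_2,b_1,b_2$ to be merely the atoms furnished by Theorem \ref{belmaintheorem}(1) (with $a=a_1+a_2$, $b=b_1+b_2$ automatic from that theorem, no support claim needed), and observes that $\mu(\{a\})+\mu(\{b\})>0$ forces \emph{at least one} factor, say $\mu_2$, to have its two atoms coincide: $a_2=b_2$. Translating so that $a_2=b_2=0$, both $a=a_1$ and $b=b_1$ lie in $\textrm{supp}(\mu_1)$, whence $\textrm{diam}(\textrm{supp}(\mu_1))\geq\textrm{diam}(\textrm{supp}(\mu))$; the equality case of Theorem \ref{tightcompact} then forces one of the factors to be a Dirac mass. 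If you want to rescue your approach, you need either this diameter-monotonicity input or an independent proof that an atom sitting at an endpoint of $\textrm{supp}(\mu)$ must decompose as a sum of atoms at the corresponding endpoints of $\textrm{supp}(\mu_1)$ and $\textrm{supp}(\mu_2)$ --- and the latter is not established by anything you cite.
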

\begin{proof}
Assume that $\mu = \mu_{1} \boxplus \mu_{2}$ and that the support of
$\mu$ has respective left and right endpoints $a$ and $b$.  Recall
that Theorem \ref{belmaintheorem} states that $$\mu(\{ a \}) =
\mu_{1}(\{ a_{1} \} ) + \mu_{2}(\{ a_{2} \}) - 1$$ $$\mu(\{ b \}) =
\mu_{1}(\{ b_{1} \} ) + \mu_{2}(\{ b_{2} \}) - 1$$ for masses $a_{i}
, b_{i} \in supp(\mu_{i})$, and that these points satisfy $a = a_{1}
+ a_{2}$ and $b = b_{1} + b_{2}$.  Now, if $a_{1} \neq b_{1}$ then
$\mu_{1} (\{ a_{1} \}) + \mu_{1} (\{ b_{1} \}) \leq 1$.  Thus, $$ 0
< \mu(\{ a \}) + \mu(\{ b \}) = \mu_{1}(\{ a_{1} \} ) + \mu_{2}(\{
a_{2} \}) + \mu_{1}(\{ b_{1} \} ) + \mu_{2}(\{ b_{2} \})- 2 $$ $$
\leq \mu_{2} (a_{2}) + \mu_{2}(\{ b_{2} \}) - 1$$ Thus, $\mu_{2}
(a_{2}) + \mu_{2}(\{ b_{2} \}) > 1$ so that $a_{2} = b_{2}$.
Translating our measures, we may assume that $a_{2} = b_{2} = 0$.
Thus, $a_{1} = a$ and $b_{1} = b$. This implies that
$\textrm{diam}(supp(\mu_{1})) \geq \textrm{diam}(supp(\mu))$. By
Theorem \ref{tightcompact}, it follows that $\mu_{2} = \delta_{0}$
so that $\mu$ is indecomposable.
\end{proof}

Now, given a measure $\mu$, it was proven by Nica and Speicher in
\cite{NS} that we may associate to $\mu$ a semigroup of measures $\{
\mu_{t} \}_{t \geq 1}$ so that $\mu_{1} = \mu$ and $\mu_{s + t} =
\mu_{s} \boxplus \mu_{t}$.  In particular, $\mu_{n} = \mu \boxplus
\cdots \boxplus \mu$, the n-fold free convolution.  When $\mu$ is
infinitely divisible, this family may be extended to $t \in
\mathbb{R}^{+}$.

It was shown in \cite{BelBer3} that for $\mu = (\delta_{1} +
\delta_{-1}) / 2$, we have that $\mu_{t}$ is a sum of two atoms
concentrated at $\pm t$ and an absolutely continuous measure
concentrated on $[-2\sqrt{t-1} , 2 \sqrt{t-1}]$.  This implies the
following corollary to our theorem.

\begin{corollary}
For $\mu = (\delta_{1} + \delta_{-1}) / 2$, the elements of the
family of measures $\{ \mu_{t} \}_{t \in [1 , 2)}$ are
indecomposable.
\end{corollary}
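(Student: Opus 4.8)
The plan is to deduce the corollary directly from the theorem just proved, whose hypothesis is that the extreme points of the support carry atoms. So the whole task reduces to checking that, for $t \in [1,2)$, the points $-t$ and $+t$ are precisely the left and right endpoints of $\textrm{supp}(\mu_t)$ and that they are atoms of $\mu_t$.

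The structural description imported from \cite{BelBer3} already records that $\mu_t$ is the sum of atoms at $\pm t$ together with an absolutely continuous part supported on $[-2\sqrt{t-1}, 2\sqrt{t-1}]$; in particular $\mu_t(\{t\}) > 0$ and $\mu_t(\{-t\}) > 0$, so the atom condition holds automatically. Hence the only point I would verify is the inclusion $[-2\sqrt{t-1}, 2\sqrt{t-1}] \subseteq [-t, t]$, equivalently $2\sqrt{t-1} \leq t$. Since both sides are nonnegative for $t \geq 1$, squaring gives the equivalent inequality $4(t-1) \leq t^2$, i.e.\ $(t-2)^2 \geq 0$, which holds for every real $t$ with equality exactly at $t = 2$. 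Thus for $t \in [1,2)$ we have the strict inequality $2\sqrt{t-1} < t$ (and at $t=1$ the interval $[-2\sqrt{t-1}, 2\sqrt{t-1}]$ degenerates to $\{0\}$), so that $\textrm{supp}(\mu_t) = \{-t\} \cup [-2\sqrt{t-1}, 2\sqrt{t-1}] \cup \{t\}$ has $-t$ as its infimum and $t$ as its supremum.

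With these facts in hand --- namely that $-t = \inf \textrm{supp}(\mu_t)$ and $t = \sup \textrm{supp}(\mu_t)$ are both atoms of $\mu_t$ --- the preceding theorem applies and yields that $\mu_t$ is indecomposable. I do not expect any real obstacle: the substance is entirely contained in \cite{BelBer3} and in the theorem above, and the only step needing a line of care is that the interval carrying the absolutely continuous part does not reach out as far as $\pm t$, which is the elementary inequality $(t-2)^2 \geq 0$; note that the borderline value $t = 2$, where the absolutely continuous part would fill the gap and $\pm t$ would cease to be isolated atoms at the ends of the support, is exactly the value excluded from the range $[1,2)$.
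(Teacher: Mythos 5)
Your proof is correct and follows exactly the route the paper intends: cite the structural description of $\mu_t$ from \cite{BelBer3}, check that the atoms at $\pm t$ are the endpoints of the support via the elementary inequality $2\sqrt{t-1}\le t$ (strict for $t<2$), and invoke the preceding theorem on measures whose support endpoints are atoms. The paper leaves this verification implicit, so your write-up simply supplies the one-line computation it omits.
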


Observe that this family of examples also dashes any hope of
uniqueness for our Khintchine decomposition.  Indeed, for $\mu$ and
$\{ \mu_{t} \}_{t \geq 1}$ as in the previous example we have that,
for $s = 2 + \epsilon$,  $\mu_{s} = \mu_{t} \boxplus \mu_{s - t}$
for all $t \in (1,1 + \epsilon)$.  This is an uncountable family of
distinct decompositions of $\mu_{s}$ into a sum of indecomposable
elements.

Note that the even the infinitely divisible divisor in the
Khintchine composition cannot be determined uniquely.  Indeed denote
by $\mu$ the semicircle distribution with mean $0$ and variance $1$,
an infinitely divisible measure. It was shown in \cite{BV3} the
there is a nontrivial decomposition $\mu = \nu \boxplus \rho$ where
neither $\nu$ nor $\rho$ is infinitely divisible.  Taking the
Khintchine decompositions for each $\nu$ and $\rho$ and combining
the respective infinitely divisible divisors, we obtain a
decomposition $\mu = \mu_{0} \boxplus \mu_{1} \boxplus \mu_{2}
\boxplus \cdots$ such that $\mu_{0}$ infinitely divisible, $\mu_{i}$
indecomposable for $i \geq 1$ and $\mu_{1}$ nontrivial.  This
implies that $\mu \neq \mu_{0}$.

Lastly, it has come to the author's attention that these results
have been addressed independently in \cite{CG}.  They rightly point
out the following improvement on Theorems \ref{id} and \ref{k1}.
Namely, the class of measures that satisfy the hypotheses of Theorem
\ref{id} are precisely the Dirac measures.  For a simple
justification of this fact, note that we have shown that such
measures are necessarily infinitely divisible. It was shown in
\cite{BV1} that infinitely divisible measures may be decomposed into
the free convolution of a semicircular measure and a free Poisson
measure. Free Poisson measures have indecomposable divisors, almost
by definition.  As was shown in \cite{BV3}, semicircular measures
also have indecomposable divisors.  These facts taken together imply
the above statement so that Theorem \ref{k1} may be improved into a
purely prime decomposition, with no infinitely divisible component.

\section*{Acknowledgements}
I would like to thank my advisor, Hari Bercovici, for his help, his
patience and his numerous suggestions.  I would also like to thank
the referee for his thoughtful recommendations.

\bibliographystyle{abbrv}
\bibliography{khintchine_final_ims_format}

\begin{thebibliography}{10}

\bibitem{AG}
N.~I. Akhiezer and I.~M. Glazman.
\newblock {\em Theory of linear operators in {H}ilbert space}.
\newblock Dover Publications Inc., New York, 1993.
\newblock Translated from the Russian and with a preface by Merlynd Nestell,
  Reprint of the 1961 and 1963 translations, Two volumes bound as one.

\bibitem{Bel2}
S.~T. Belinschi.
\newblock A note on regularity for free convolutions.
\newblock {\em Ann. Inst. H. Poincar\'e Probab. Statist.}, 42(5):635--648,
  2006.

\bibitem{Bel1}
S.~T. Belinschi.
\newblock The {L}ebesgue decomposition of the free additive convolution of two
  probability distributions.
\newblock {\em Probab. Theory Related Fields}, 142(1-2):125--150, 2008.

\bibitem{BelBer3}
S.~T. Belinschi and H.~Bercovici.
\newblock Atoms and regularity for measures in a partially defined free
  convolution semigroup.
\newblock {\em Math. Z.}, 248(4):665--674, 2004.

\bibitem{BerBel}
S.~T. Belinschi and H.~Bercovici.
\newblock Partially defined semigroups relative to multiplicative free
  convolution.
\newblock {\em Int. Math. Res. Not.}, (2):65--101, 2005.

\bibitem{BerBel2}
S.~T. Belinschi and H.~Bercovici.
\newblock A new approach to subordination results in free probability.
\newblock {\em J. Anal. Math.}, 101:357--365, 2007.

\bibitem{Bel}
S.~T. Belinschi and H.~Bercovici.
\newblock Hin\v cin's theorem for multiplicative free convolution.
\newblock {\em Canad. Math. Bull.}, 51(1):26--31, 2008.

\bibitem{Pata}
H.~Bercovici and V.~Pata.
\newblock A free analogue of {H}in\v cin's characterization of infinite
  divisibility.
\newblock {\em Proc. Amer. Math. Soc.}, 128(4):1011--1015, 2000.

\bibitem{BV2}
H.~Bercovici and D.~Voiculescu.
\newblock L\'evy-{H}in\v cin type theorems for multiplicative and additive free
  convolution.
\newblock {\em Pacific J. Math.}, 153(2):217--248, 1992.

\bibitem{BV1}
H.~Bercovici and D.~Voiculescu.
\newblock Free convolution of measures with unbounded support.
\newblock {\em Indiana Univ. Math. J.}, 42(3):733--773, 1993.

\bibitem{BV3}
H.~Bercovici and D.~Voiculescu.
\newblock Superconvergence to the central limit and failure of the {C}ram\'er
  theorem for free random variables.
\newblock {\em Probab. Theory Related Fields}, 103(2):215--222, 1995.

\bibitem{JC1}
H.~Bercovici and J.-C. Wang.
\newblock On freely indecomposable measures.
\newblock {\em Indiana Univ. Math. J.}, 57(6):2601--2610, 2008.

\bibitem{Bi}
P.~Biane.
\newblock Processes with free increments.
\newblock {\em Math. Z.}, 227(1):143--174, 1998.

\bibitem{CG}
G.~{Chistyakov} and F.~{G{\"o}tze}.
\newblock {The Arithmetic of Distributions in Free Probability Theory}.
\newblock {\em ArXiv Mathematics e-prints}, Aug. 2005.

\bibitem{Ln}
Y.~V. Linnik.
\newblock {\em Decomposition of probability distributions}.
\newblock Edited by S. J. Taylor. Dover Publications Inc., New York, 1964.

\bibitem{Ma}
H.~Maassen.
\newblock Addition of freely independent random variables.
\newblock {\em J. Funct. Anal.}, 106(2):409--438, 1992.

\bibitem{NS}
A.~Nica and R.~Speicher.
\newblock On the multiplication of free {$N$}-tuples of noncommutative random
  variables.
\newblock {\em Amer. J. Math.}, 118(4):799--837, 1996.

\bibitem{V1}
D.~Voiculescu.
\newblock Addition of certain noncommuting random variables.
\newblock {\em J. Funct. Anal.}, 66(3):323--346, 1986.

\bibitem{V2}
D.~Voiculescu.
\newblock Multiplication of certain noncommuting random variables.
\newblock {\em J. Operator Theory}, 18(2):223--235, 1987.

\bibitem{V3}
D.~Voiculescu.
\newblock The analogues of entropy and of {F}isher's information measure in
  free probability theory. {I}.
\newblock {\em Comm. Math. Phys.}, 155(1):71--92, 1993.

\bibitem{VDN}
D.~V. Voiculescu, K.~J. Dykema, and A.~Nica.
\newblock {\em Free random variables}, volume~1 of {\em CRM Monograph Series}.
\newblock American Mathematical Society, Providence, RI, 1992.
\newblock A noncommutative probability approach to free products with
  applications to random matrices, operator algebras and harmonic analysis on
  free groups.

\end{thebibliography}

\end{document}